\newif\ifpersonal
\theoremstyle{plain}
\newtheorem{thm}{Theorem}[section]
\newtheorem*{thm*}{Theorem}
\newtheorem{lem}[thm]{Lemma}
\newtheorem{prop}[thm]{Proposition}
\newtheorem{cor}[thm]{Corollary}
\newtheorem*{cor*}{Corollary}
\theoremstyle{definition}
\newtheorem{defin}[thm]{Definition}
\theoremstyle{remark}
\newtheorem{rem}[thm]{Remark}
\numberwithin{equation}{section}
\newcommand*{\personal}[1]{\textcolor{blue}{(Personal: #1)}}
\newcommand*{\todo}[1]{\textcolor{red}{(Todo: #1)}}
\newcommand*{\personal}[1]{\ignorespaces}
\newcommand*{\todo}[1]{\ignorespaces}
\newcommand{\Q}{\mathbb Q}
\newcommand{\R}{\mathbb R}
\newcommand{\Z}{\mathbb Z}
\newcommand{\fC}{\mathfrak C}
\newcommand{\fS}{\mathfrak S}
\newcommand{\fT}{\mathfrak T}
\newcommand{\fU}{\mathfrak U}
\newcommand{\fX}{\mathfrak X}
\newcommand{\fY}{\mathfrak Y}
\newcommand{\ff}{\mathfrak f}
\newcommand{\fm}{\mathfrak m}
\newcommand{\fs}{\mathfrak s}
\newcommand{\ft}{\mathfrak t}
\newcommand{\cD}{\mathcal D}
\newcommand{\cF}{\mathcal F}
\newcommand{\cO}{\mathcal O}
\newcommand{\bbD}{\mathbb D}
\newcommand{\bbG}{\mathbb G}
\newcommand{\oD}{\overline D}
\newcommand{\oM}{\overline M}
\newcommand{\hL}{\widehat L}
\newcommand{\IX}{I_\fX}
\newcommand{\SX}{S_\fX}
\newcommand{\SY}{S_\fY}
\newcommand{\oSX}{\overline{\SX}}
\newcommand{\oIX}{\overline{\IX}}
\newcommand{\fXe}{\fX_\eta}
\newcommand{\fXs}{\fX_s}
\newcommand{\ofX}{\overline{\fX}}
\newcommand{\fYe}{\fY_\eta}
\newcommand{\fXbs}{\fX_{\bar s}}
\newcommand{\QXe}{\Q_{\ell,\fX_\eta}}
\newcommand{\QXbs}{\Q_{\ell,\fXbs}}
\newcommand{\fCbs}{\fC_{\bar s}}
\newcommand{\QCe}{\Q_{\ell,\fC_\eta}}
\newcommand{\QCs}{\Q_{\ell,\fCbs}}
\newcommand{\bcMgn}{\overline{\mathcal M}_{g,n}}
\newcommand{\bMgnt}{\overline{M}^\mathrm{trop}_{g,n}}
\newcommand{\LanD}{\mathcal L_{an}^D}
\newcommand{\kc}{k^\circ}
\newcommand{\llb}{[\![}
\newcommand{\rrb}{]\!]}
\newcommand{\an}{^\mathrm{an}}
\newcommand{\et}{_\mathrm{\acute{e}t}}
\newcommand{\inv}{^{-1}}
\newcommand{\gn}{$n$-pointed genus $g$ }
\newcommand{\kanal}{$k$-analytic\xspace}
\renewcommand{\th}{^\mathrm{\tiny th}}
\newcommand{\trop}{^\mathrm{trop}}
\providecommand{\abs}[1]{\lvert#1\rvert}
\tikzset{
  closed/.style = {decoration = {markings, mark = at position 0.5 with { \node[transform shape, xscale = .8, yscale=.4] {/}; } }, postaction = {decorate} },
  open/.style = {decoration = {markings, mark = at position 0.5 with { \node[transform shape, scale = .7] {$\circ$}; } }, postaction = {decorate} }
}
\DeclareMathOperator{\Coker}{Coker}
\DeclareMathOperator{\Div}{Div}
\DeclareMathOperator{\Ker}{Ker}
\DeclareMathOperator{\NE}{NE}
\DeclareMathOperator{\Spf}{Spf}
\DeclareMathOperator{\val}{val}
\begin{document}
\title[Tropicalization of the moduli space of stable maps]{Tropicalization of the moduli space of\\ stable maps}
\author{Tony Yue YU}
\address{Tony Yue YU, Institut de Mathématiques de Jussieu - Paris Rive Gauche, CNRS-UMR 7586, Case 7012, Université Paris Diderot - Paris 7, Bâtiment Sophie Germain 75205 Paris Cedex 13 France}
\email{yuyuetony@gmail.com}
\date{July 31, 2014 (revised on July 6, 2015)}
\subjclass[2010]{Primary 14T05; Secondary 14G22 14H15 03C98 03C10 32B20}
\keywords{Tropicalization, moduli space, stable map, continuity, polyhedrality, Berkovich space, balancing condition, vanishing cycle, quantifier elimination, rigid subanalytic set}

\begin{abstract}
Let $X$ be an algebraic variety and let $S$ be a tropical variety associated to $X$.
We study the tropicalization map from the moduli space of stable maps into $X$ to the moduli space of tropical curves in $S$.
We prove that it is a continuous map and that its image is compact and polyhedral.
Loosely speaking, when we deform algebraic curves in $X$, the associated tropical curves in $S$ deform continuously;
moreover, the locus of realizable tropical curves inside the space of all tropical curves is compact and polyhedral.
Our main tools are Berkovich spaces, formal models, balancing conditions, vanishing cycles and quantifier elimination for rigid subanalytic sets.
\end{abstract}

\maketitle

\personal{PERSONAL COMMENTS ARE SHOWN!!!}

\tableofcontents

\section{Introduction}\label{sec:intro_(tropicalization_moduli)}

Let $k$ be a complete discrete valuation field,
$X$ a toric variety over $k$ of dimension $d$,
and $C$ an algebraic curve embedded in $X$.
In tropical geometry (see\{ \cite{Mikhalkin_Tropical_ICM_2006,Itenberg_Tropical_2009,Gathmann_Tropical_2006}),
one associates to $C$ a piecewise-linear tropical curve $C\trop$ embedded in $\R^d$.
In this paper, we study this ``tropicalization procedure'' in families.

In fact, we do not restrict ourselves to toric target spaces.
We work with the framework of global tropicalization using Berkovich spaces (cf.\ \cite{Berkovich_Smooth_1999,Yu_Balancing_2013,Yu_Gromov_2014,Gubler_Skeletons_2014}).
This is not only more general, but also more natural from our viewpoint.

Roughly speaking, we prove the following results:
\begin{enumerate}[(i)]
\item The tropical curve $C\trop$ deforms continuously when we deform the algebraic curve $C$.
\item The locus of realizable tropical curves inside the space of all tropical curves of bounded degree is compact and polyhedral.
\end{enumerate}

More precisely, we fix a $k$-analytic space $X$ as the target space for curves.
We tropicalize $X$ by choosing a strictly semi-stable formal model $\fX$ of $X$ (see \cref{def:sss_formal_model}).
The associated tropical variety $\SX$ is homeomorphic to the dual intersection complex of the special fiber $\fXs$.
We call $\SX$ the \emph{Clemens polytope}.
As in the toric case, analytic curves in $X$ give rise to piecewise-linear tropical curves in $\SX$.

In order to bound the complexity of the analytic curves in $X$, we need to bound their degree with respect to a Kähler structure $\hL$ on $X$ (see \cref{sec:Kahler} and \cite{Yu_Gromov_2014,Kontsevich_Non-archimedean_2002}).
Similarly, we will define a notion of \emph{simple density} $\omega$ on $\SX$ induced by $\hL$ in order to bound the complexity of the tropical curves in $\SX$.

We have the following description of the space of tropical curves in $\SX$ with bounded degree (\cref{thm:tropical_compactness}).

\begin{thm*} 
Fix two non-negative integers $g,n$ and a positive real number $A$.
Let $M_{g,n}(\SX,A)$ denote the set of simple $n$-pointed genus $g$ parametrized tropical curves in $\SX$ whose degree with respect to $\omega$ is bounded by $A$.
Then $M_{g,n}(\SX,A)$ is naturally a compact topological space with a stratification whose open strata are open convex polyhedrons.
\end{thm*}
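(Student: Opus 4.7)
The plan is to stratify $M_{g,n}(\SX,A)$ by the \emph{combinatorial type} of a tropical curve and to realize each stratum as the relative interior of a convex polytope. A combinatorial type $\tau$ should record (i) the underlying $n$-legged graph $G$ together with a genus function on its vertices compatible with the total genus $g$; (ii) for every vertex $v$ of $G$, the cell $\sigma_v$ of the Clemens polytope $\SX$ in whose relative interior the image of $v$ lies; and (iii) for every bounded edge $e$ the cell $\sigma_e \supseteq \sigma_v \cup \sigma_{v'}$ containing its image, together with an integer primitive tangent direction $u_e$ along $\sigma_e$. Fixing such a type $\tau$, I would parametrize tropical curves of type $\tau$ by the positions $x_v \in \mathrm{relint}(\sigma_v)$ of the vertices and the lengths $\ell_e > 0$ of the bounded edges.

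Once $\tau$ is fixed, the conditions on $(x_v,\ell_e)$ defining a tropical curve of that type are: the edge relations $x_{v'} - x_v = \ell_e u_e$ inside $\sigma_e$ (affine linear in $(x_v,\ell_e)$, since $u_e$ is part of $\tau$); the balancing condition at each interior vertex (a linear constraint on the $u_e$'s, hence already encoded in $\tau$); the openness conditions $x_v \in \mathrm{relint}(\sigma_v)$ and $\ell_e > 0$; and the degree inequality $\sum_e \ell_e \langle \omega, u_e\rangle \le A$, which is affine linear by the definition of the simple density $\omega$ and positivity of $\omega$ on primitive directions. The locus $M_\tau$ cut out by these conditions is therefore the relative interior of a compact convex polytope $\overline{M_\tau}$.

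Next I would prove that only finitely many combinatorial types contribute. Because $\omega$ takes strictly positive values on every primitive integral direction tangent to a positive-dimensional cell of $\SX$, the inequality $\sum_e \ell_e \langle \omega, u_e\rangle \le A$ bounds both the number of bounded edges of positive length and the total tropical length; together with the fixed values of $g$ and $n$ and the stability condition built into ``simple'', this leaves only finitely many isomorphism classes of graphs $G$ and finitely many admissible assignments of cells $\sigma_v,\sigma_e$ and direction vectors $u_e$. Call the resulting finite set of types $\mathcal{T}$.

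Finally, to topologize and prove compactness, I would equip
\[
M_{g,n}(\SX,A) = \bigsqcup_{\tau\in\mathcal{T}} M_\tau
\]
with the quotient topology inherited from $\bigsqcup_{\tau\in\mathcal{T}}\overline{M_\tau}$ under the identifications that collapse degenerate boundary faces: an edge whose length shrinks to $0$ gets contracted, and a vertex escaping into a proper face of $\sigma_v$ gets reinterpreted as a vertex of a coarser type with smaller target cell. Each boundary face of $\overline{M_\tau}$ is then identified with the closure $\overline{M_{\tau'}}$ of a coarser type $\tau' \in \mathcal{T}$, so the resulting space is a finite union of compact convex polytopes glued along common faces; this gives a compact Hausdorff stratified space whose open strata are precisely the open convex polyhedra $M_\tau$. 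The main technical obstacle, where I expect the real work to lie, is verifying that the face identifications among the $\overline{M_\tau}$'s are globally consistent, that ``simpleness'' is preserved under the relevant degenerations (or handled cleanly when it fails), and that the quotient topology is Hausdorff—equivalently, that two parametrized metric graphs $h\colon\Gamma\to\SX$ representing genuinely the same tropical curve in $\SX$ are not artificially separated by the stratification.
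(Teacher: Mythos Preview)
Your proposal rests on a misreading of the paper's notion of tropical degree. In Definition~5.2 the degree of $(\Gamma,h)$ with respect to the simple density $\omega$ is the sum over \emph{vertices} of $\lvert\sigma_v\rvert_\omega=\max_{j\in J_{I_v}}\omega_{I_v,j}\lvert\sigma_v^j\rvert$, where $\sigma_v=\sum_{e\ni v}w_{(v,e)}$ is the defect from balancing. It does not involve edge lengths at all, and it vanishes at every balanced (type~B) vertex. Consequently your inequality $\sum_e \ell_e\langle\omega,u_e\rangle\le A$ is not what is being bounded, and the degree condition by itself neither bounds the number of edges nor the magnitudes of the weight vectors $w_{(v,e)}$. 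Relatedly, the curves in $M_{g,n}(\SX,A)$ are \emph{not} required to satisfy balancing, and ``simple'' here only means that degree-2 vertices with $g(v)=n(v)=0$ are unbalanced (type~A); it is not a stability condition, and marked points are vertices rather than legs.

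Because of this, your finiteness argument collapses: a priori one could have arbitrarily many balanced 2-valent or higher-valent vertices and arbitrarily large weight vectors while keeping the degree below $A$. The paper's actual route is: the degree bound controls the number of type~A vertices and the norms $\lvert\sigma_v\rvert$ (Lemma~6.3); simplicity then bounds the number of vertices of valence $<3$, and an Euler-characteristic count against the genus bounds all vertices and hence all edges (Lemmas~6.4--6.5); finally a nontrivial flow/path argument (Lemma~6.6) bounds each individual weight vector $w_{(v,e)}$ by routing $\lvert w^i_{(v,e)}\rvert$ paths through the graph until they terminate at type~A vertices. Only after these bounds are in place does one get finitely many combinatorial types, and the polytope description you give (Proposition~6.7 in the paper) is essentially correct---with the caveat that compactness of $\overline{M_\alpha}$ comes from the vertices lying in the compact simplices of $\SX$, not from the degree inequality.
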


In view of applications, instead of only considering analytic curves embedded in $X$, we will consider stable maps into $X$ introduced by Maxim Kontsevich \cite{Kontsevich_Enumeration_1995}.
Let $T$ be a strictly \kanal space.
Assume we have a family over $T$ of \gn \kanal stable maps into $X$ with degree bounded by $A$.
We have the following set-theoretic tropicalization map
\[\tau_T\colon T\longrightarrow M_{g,n}(\SX,A)\]
which sends $n$-pointed genus $g$ \kanal stable maps to the associated $n$-pointed genus $g$ parametrized tropical curves.

\begin{thm*} 
The tropicalization map $\tau_T$ has the following properties:
\begin{enumerate}[(i)]
\item It is a continuous map (\cref{thm:continuity}).
\item Its image is polyhedral in $M_{g,n}(\SX,A)$, in the sense that the intersection with every open stratum of $M_{g,n}(\SX,A)$ is polyhedral (\cref{thm:relative_polyhedrality}).
\end{enumerate}
\end{thm*}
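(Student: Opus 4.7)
The plan is to handle both parts by reducing to explicit data on a strictly semi-stable formal model of the universal curve over $T$, and then analysing the tropical invariants as valuations of analytic functions.

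\emph{Setup.} Continuity and polyhedrality are local on $T$, so I first reduce to the case where $T$ is $k$-affinoid. After possibly a finite extension of $k$ and an admissible formal blowup of $T$, semi-stable reduction produces a strictly semi-stable formal model $\fC$ of the universal \kanal curve over $T$, compatible via the stable map with the chosen formal model $\fX$ of the target. The tropical curve attached to each geometric fiber is then read off from the dual intersection complex of the special fiber of $\fC$, with edge lengths given by the thicknesses of the nodes (equivalently, valuations of local parameters) and with the piecewise-linear map to $\SX$ induced by the morphism $\fC\to\fX$.

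\emph{Continuity.} Stratify $T$ by combinatorial type. On each stratum $T_\Gamma$ the numerical parameters of the tropical curve---edge lengths and vertex coordinates in the simplices of $\SX$---are valuations of specific analytic functions on the universal curve, hence continuous in $t$. For continuity across strata I use the topology on $M_{g,n}(\SX,A)$ from the compactness theorem: a degeneration is precisely an edge length going to zero, so passages between adjacent combinatorial types are already built into the topology. What remains reduces to the continuity of the Berkovich reduction map and a comparison of thicknesses between adjacent types.

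\emph{Polyhedrality.} Fix an open stratum $M_\Gamma\subset M_{g,n}(\SX,A)$; we want the intersection of $\tau_T(T)$ with $M_\Gamma$ to be polyhedral. The preimage $T_\Gamma=\tau_T\inv(M_\Gamma)$ is a locally closed rigid subanalytic subset of $T$, cut out by conditions specifying which nodes of the special fiber of $\fC$ are genuinely present, and the restriction $\tau_T|_{T_\Gamma}$ is expressed through finitely many valuations of analytic functions. Quantifier elimination for rigid subanalytic sets (Lipshitz--Robinson) then shows that the image is a rigid subanalytic subset of $\R^N$, hence a Boolean combination of sets cut out by $\Q$-linear inequalities in the edge-length and vertex coordinates. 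Combined with the boundedness of the degree by $A$ and with the balancing condition producing rational linear relations on outgoing edge vectors at each vertex, this forces the image to be a finite union of rational polyhedra.

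\emph{Main obstacle.} The delicate step is polyhedrality. The balancing condition alone does not characterise the realizable locus, so one genuinely needs a quantifier-elimination result to transfer from the \kanal moduli to its tropical image. Most of the technical work will therefore lie in setting up the rigid subanalytic framework---verifying that families of \kanal stable maps produce subanalytic data to which Lipshitz--Robinson applies---rather than in continuity, which should follow from standard Berkovich-skeleton techniques.
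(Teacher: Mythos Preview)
Your overall plan is in the right spirit, and for polyhedrality you correctly identify Lipshitz--Robinson quantifier elimination as the key input. But there is a genuine gap in your continuity argument, and a subtlety you have glossed over in the polyhedrality argument.

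\textbf{Continuity.} You write that ``a degeneration is precisely an edge length going to zero, so passages between adjacent combinatorial types are already built into the topology.'' This is not enough. The notion of degeneration of combinatorial types (\cref{def:degeneration_combinatorial_type}) requires that the integer weight vectors $w_{(v,e)}$ match up under the contraction map $\phi$, and also allows vertices to move to lower-dimensional faces of $\SX$. Nothing in your sketch explains why, as $t$ specialises in $T$, the weight vectors of the limiting tropical curve agree with those of the nearby ones. This is the heart of the matter: edge lengths and vertex positions are continuous essentially by construction, but constancy of the discrete data under specialisation is not automatic. The paper establishes this via vanishing cycles: \cref{lem:tropical_weight_vector_as_intersection_number} expresses each component of the weight vector as an intersection number on the special fibre, and \cref{lem:degenerations_formal_model} then shows that these intersection numbers are locally constant along the closure of a point of $\fT_s$ because the relevant sheaf $\bar\sigma^* R^1\Phi\QCe$ is locally constant. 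Once this is in hand, the paper finishes by observing that the preimage of a basic open $U(\cD)$ contains the anti-continuous set $\pi_\fT^{-1}(\fT_s^b)$, which is open. Your ``comparison of thicknesses between adjacent types'' does not supply this ingredient.

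\textbf{Polyhedrality.} Your outline here is close to the paper's, but you skip a real step. Valuations of analytic functions on the universal curve give you the \emph{image} $h(\Gamma)\subset\SX$ together with the marked point positions, not the parametrised tropical curve $(\Gamma,(\gamma_i),h)$ itself. The paper makes this explicit by first proving polyhedrality in the quotient $M_{g,n}(\SX,A)_u$ that only remembers $h(\Gamma)$ and the $h(\gamma_i)$ (\cref{prop:relative_polyhedrality_after_forget}), then adding enough extra marked points so that every vertex carries one, making the quotient map $u$ finite-to-one on the relevant locus. At that point the continuity theorem is invoked again: it guarantees that the preimage of each connected component $O_i$ of $u^{-1}(O)$ is a union of connected components of a subanalytic set, hence itself subanalytic, so one can apply \cref{thm:polyhedrality_of_tropicalization} componentwise. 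Your claim that $T_\Gamma=\tau_T^{-1}(M_\Gamma)$ is subanalytic ``cut out by conditions specifying which nodes are present'' elides exactly this point: the weight-vector and graph-structure part of the combinatorial type is not a priori cut out by valuation inequalities. Also, the balancing condition plays no role in the polyhedrality argument; it is used only for continuity (via the vanishing-cycle interpretation of weight vectors), not to produce the linear relations you mention.
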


It is helpful to reformulate the theorem for the universal family of stable maps.
Let $\bcMgn(X,A)$ denote the moduli stack of \gn \kanal stable maps into $X$ with degree bounded by $A$.
It is a compact \kanal stack by \cite{Yu_Gromov_2014}.
The map $\tau_T$ above extends to a tropicalization map
\[\tau_M\colon \bcMgn(X,A)\longrightarrow M_{g,n}(\SX,A).\]

The image of the map $\tau_M$ consists of so-called \emph{realizable tropical curves}.
We denote it by $\bMgnt(X,A)$.
It is of much interest in tropical geometry to characterize realizable tropical curves (cf.\ \cite{Mikhalkin_Tropical_ICM_2006,Nishinou_Toric_2006,Speyer_Uniformizing_2007,Nishinou_Correspondence_2009,Tyomkin_Tropical_2012,Cheung_Faithful_realizability_2014,Nishinou_Yu_Realization_2015}).
We have the following corollary concerning $\tau_M$ and the locus of realizable tropical curves.

\begin{cor*}
\begin{enumerate}[(i)]
\item The tropicalization map $\tau_M$ is a continuous map.
\item The locus $\bMgnt(X,A)$ is compact and polyhedral.
\end{enumerate}
\end{cor*}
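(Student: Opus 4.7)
The plan is to deduce both parts directly from the preceding theorem by passing to an atlas of the moduli stack and invoking the compactness of $\bcMgn(X,A)$. Since this stack is compact by \cite{Yu_Gromov_2014}, I would choose a finite cover of its underlying topological space by images of charts $p_i\colon T_i \to \bcMgn(X,A)$ where each $T_i$ is a compact strictly \kanal space. Pulling back the universal family along $p_i$ yields a family of \gn \kanal stable maps over $T_i$ of degree bounded by $A$, and the preceding theorem produces a continuous tropicalization map $\tau_{T_i}\colon T_i \to M_{g,n}(\SX,A)$ whose image meets every open stratum of $M_{g,n}(\SX,A)$ in a polyhedral subset.

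Since the tropical curve attached to a stable map depends only on the isomorphism class of the stable map, the maps $\tau_{T_i}$ are constant on the fibers of $p_i$ and agree on overlaps via the groupoid defining the stack. They therefore descend to a unique set-theoretic map $\tau_M\colon |\bcMgn(X,A)| \to M_{g,n}(\SX,A)$, which coincides with the map defined pointwise on isomorphism classes in the introduction. Continuity of $\tau_M$, which is part (i) of the corollary, then follows from continuity of each $\tau_{T_i}$ together with the quotient topology on $|\bcMgn(X,A)|$ induced by the $p_i$.

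For part (ii), compactness of $\bMgnt(X,A)$ is immediate: it is the continuous image under $\tau_M$ of the compact space $|\bcMgn(X,A)|$. For polyhedrality, the key observation is that $\bMgnt(X,A) = \bigcup_i \tau_{T_i}(T_i)$ is a finite union, so its intersection with any open stratum of $M_{g,n}(\SX,A)$ is a finite union of polyhedral subsets of a single convex open polyhedron, which is again polyhedral. The only genuine obstacle is the bookkeeping of the descent step: one must verify that the choice of $p_i$ can be made so that $|\bcMgn(X,A)|$ carries the quotient topology, and that isomorphism-invariance of tropicalization is compatible with the groupoid presentation of the stack. All the serious analytic and combinatorial content has already been absorbed into the preceding theorem.
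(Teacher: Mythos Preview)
Your proposal is correct and follows essentially the same approach as the paper. The paper splits the corollary across three separately stated corollaries (\cref{cor:continuity}, \cref{cor:compactness_of_Mtgn}, \cref{cor:polyhedrality}), each proved in one line by citing the relevant theorems (\cref{thm:non-archimedean_Gromov_(tropicalization_moduli)}, \cref{thm:continuity}, \cref{thm:relative_polyhedrality}); your argument spells out the descent from charts $T_i$ to the stack more explicitly than the paper does, but the logic is the same.
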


\medskip

\paragraph{\textbf{Discussions and related works}}
One motivation of our work stems from the speculations by Kontsevich and Soibelman in \cite[\S 3.3]{Kontsevich_Homological_2001} and the works by Gross, Siebert, Hacking and Keel \cite{Gross_Real_Affine_2011,Gross_Tropical_2011,Gross_Mirror_Log_2011}.
We will apply our results to the study of non-archimedean enumerative geometry (see \cite{Yu_Enumeration_cylinders_2015}).
Another motivation is a question asked by Ilia Itenberg during a talk given by the author at Jussieu, Paris.

Moduli spaces of tropical curves in classical contexts were studied by Mikhalkin, Nishinou, Siebert, Gathmann, Markwig, Kerber, Kozlov, Caporaso, Viviani, Brannetti, Melo, Chan, Yu and others in \cite{Mikhalkin_Tropical_ICM_2006,Mikhalkin_Moduli_spaces_2007,Nishinou_Toric_2006,Gathmann_Kontsevich_formula_2008,Gathmann_Tropical_fans_2009,Kozlov_The_topology_2009,Caporaso_Torelli_2010,Kozlov_Moduli_spaces_2011,Brannetti_On_the_tropical_2011,Caporaso_Algebraic_and_tropical_2013,Chan_Combinatorics_2012,Chan_Tropical_Teichmuller_2013,Gross_Logarithmic_2013,Yu_Number_2013}.

Very interesting geometry concerning the tropicalization of the moduli space of stable curves is studied in detail by Abramovich, Caporaso and Payne \cite{Abramovich_Tropicalization_2012}.
More generally, we expect to have explicit descriptions for the tropicalization of the moduli space of stable maps into toric varieties.
There are related developments by Cavalieri, Markwig, Ranganathan, Ascher, Molcho, Chen, Satriano and A.\ Gross \cite{Cavalieri_Tropical_compactification_2014,Cavalieri_Tropicalizing_the_space_2014,Ascher_Logarithmic_stable_2014,Chen_Chow_quotients_2013,Gross_Correspondence_theorems_2014,Ranganathan_Moduli_of_rational_2015}.

The model-theoretic technique involved in the proof of polyhedrality is inspired by the works of Ducros \cite{Ducros_Espaces_de_Berkovich_polytopes_2012} and Martin \cite{Martin_Constructibilite_2013}, and is based on the theory of rigid subanalytic sets developed by Lipshitz and Robinson \cite{Lipshitz_Rigid_1993,Lipshitz_Model_completeness_2000}.

We regret a certain asymmetry in our results.
On the analytic side, we consider stable maps; while on the tropical side, we consider parametrized tropical curves which are locally embedded.
One can define the notion of tropical stable maps and study their moduli space.
We conjecture that the tropicalization map from the space of analytic stable maps to the space of tropical stable maps is also continuous and has polyhedral image.

\medskip

\paragraph{\textbf{Outline of the paper}}

In Section \ref{sec:basic_settings_(tropicalization_moduli)}, we review the basic settings of global tropicalization.
Given a $k$-analytic space $X$, the tropicalization $\SX$ of $X$ depends on the choice of a formal model $\fX$ of $X$.
We work with strictly semi-stable formal models for simplicity.

In \cref{sec:parametrized_tropical_curves}, we define the notion of parametrized tropical curve in our context.
We explain how analytic curves in $X$ give rise to tropical curves in $\SX$.

The tropical curves in $\SX$ that arise from analytic curves satisfy a distinguished geometrical property, called the \emph{balancing condition}.
It is a generalization of the classical balancing condition (cf.\ \cite{Mikhalkin_Enumerative_2005,Nishinou_Toric_2006,Speyer_Uniformizing_2007,Baker_Nonarchimedean_2011}).
The balancing condition in the global setting was first studied in \cite{Yu_Balancing_2013} using $k$-analytic cohomological arguments.
In Section \ref{sec:balancing_conditions}, we give a different proof which is useful for the purpose of this paper.
The main observation is that the tropical weight vectors can be read out directly from certain intersection numbers via the functor of vanishing cycles (\cref{lem:tropical_weight_vector_as_intersection_number}).

In \cref{sec:Kahler}, we introduce a combinatorial notion of simple density on $\SX$.
We define the degree of a tropical curve with respect to a simple density.
We use it to give a lower bound of the degree of an analytic curve in $X$ with respect to a non-archimedean Kähler structure on $X$.

In \cref{sec:space_of_tropical_curves}, we study the space of tropical curves in $\SX$ with bounded degree.
We use some combinatorial arguments from our previous work \cite{Yu_Number_2013}.

In \cref{sec:stack_of_stable_maps}, we review the moduli stack of \kanal stable maps constructed in \cite{Yu_Gromov_2014}.

In \cref{sec:continuity_of_tropicalization}, we prove the continuity of the tropicalization map from the \kanal moduli space of \cref{sec:stack_of_stable_maps} to the tropical moduli space of \cref{sec:space_of_tropical_curves}.
The proof makes use of the balacing conditions in \cref{sec:balancing_conditions} and the formal models of families of \kanal stable maps developed in \cite{Yu_Gromov_2014}.

In \cref{sec:polyhedrality}, combining the continuity theorem of \cref{sec:continuity_of_tropicalization} with the quantifier elimination theorem from the model theory of rigid subanalytic sets \cite{Lipshitz_Rigid_1993}, we prove the polyhedrality of the locus of realizable tropical curves inside the space of all tropical curves.

\medskip

\paragraph{\textbf{Acknowledgments}}
I am very grateful to Maxim Kontsevich and Antoine Chambert-Loir for inspirations and support.
Special thanks to Antoine Ducros from whom I learned model theory and its applications to tropical geometry.
I appreciate valuable discussions with Vladimir Berkovich, Pierrick Bousseau, Ilia Itenberg, François Loeser, Florent Martin, Johannes Nicaise, Sam Payne and Michael Temkin.
Comments given by the referees helped greatly improve the paper.

\section{Basic settings of global tropicalization}\label{sec:basic_settings_(tropicalization_moduli)}
In this section, we review the basic settings of global tropicalization.
We refer to \cite[\S 2]{Yu_Gromov_2014} and \cite[\S 3]{Boucksom_Singular_2011} for more details
(see also \cite{Kontsevich_Non-archimedean_2002,Kempf_Toroidal_1973,Gubler_Skeletons_2014,Yu_Balancing_2013}).

Let $k$ be a complete discrete valuation field.
Denote by $k^\circ$ the ring of integers, $k^{\circ\circ}$ the maximal ideal, and $\tilde k$ the residue field.

For $n\geq 0$, $0\leq d\leq n$ and $a\in k^{\circ\circ}\setminus 0$, put
\begin{equation}\label{eq:standard_formal_scheme_(tropicalization_moduli)}
\fS(n,d,a) = \Spf \left(k^\circ\{T_0,\dots,T_n, T\inv_{d+1},\dots,T\inv_n\}/(T_0\cdots T_d-a)\right) ,
\end{equation}
where $\Spf$ denotes the formal spectrum.

\begin{defin}[\cite{Berkovich_Smooth_1999}]
A formal scheme $\fX$ is said to be \emph{finitely presented} over $k^\circ$ if it is a finite union of open affine subschemes of the form \[\Spf\big(k^\circ\{T_0,\dots,T_n\}/(f_1,\dots,f_m)\big) .\]
\end{defin}

Let $\fX$ be a formal scheme finitely presented over $k^\circ$.
One can define its generic fiber $\fXe$ and its special fiber $\fXs$ following \cite{Berkovich_Vanishing_1994}.
Its generic fiber $\fXe$ has the structure of a compact strictly \kanal space in the sense of Berkovich \cite{Berkovich_Spectral_1990,Berkovich_Etale_1993}, and its special fiber $\fXs$ is a scheme of finite type over the residue field $\tilde k$.
We denote by $\pi\colon\fXe\to\fXs$ the reduction map from the generic fiber to the special fiber (cf.\ \cite[\S 1]{Berkovich_Vanishing_1994}).

\begin{defin}\label{def:formal_model}
Let $X$ be a \kanal space. A \emph{(finitely presented) formal model} of $X$ is a formal scheme $\fX$ finitely presented over $k^\circ$ together with an isomorphism between the generic fiber $\fXe$ and the \kanal space $X$.
\end{defin}

\begin{defin}\label{def:sss_formal_model}
Let $\fX$ be a formal scheme finitely presented over $k^\circ$. 
The formal scheme $\fX$ is said to be \emph{strictly semi-stable} if
\begin{enumerate}[(i)]
\item Every point $x$ of $\fX$ has an open affine neighborhood $\fU$ such that the structure morphism $\fU\rightarrow\Spf k^\circ$ factors through an étale morphism $\phi\colon \fU\rightarrow\mathfrak S(n,d,\varpi)$ for some $0\leq d\leq n$ and a uniformizer $\varpi$ of $k$.
\item All the intersections of the irreducible components of the special fiber $\fXs$ are either empty or geometrically irreducible.
\end{enumerate}
\end{defin}

Let $X$ be a \kanal space and let $\fX$ be a strictly semi-stable formal model\footnote{If the residue field $\tilde k$ has characteristic zero and if $X$ is compact quasi-smooth and strictly \kanal, then strictly semi-stable formal models exist up to passing to a finite extension of $k$ (cf.\ \cite{Temkin_Desingularization_2008,Kempf_Toroidal_1973}).} of $X$.
Let $\Set{D_i|i\in\IX}$ denote the set of the irreducible components of the special fiber $\fXs$.
For every non-empty subset $I\subset\IX$,
put $D_I=\bigcap_{i\in I} D_i$ and 
\begin{equation}\label{eq:J_(tropicalization_moduli)}
J_I=\Set{j\in\IX | D_{I\cup\{j\}}\neq\emptyset}.
\end{equation}
Condition (i) of \cref{def:sss_formal_model} implies that all the strata $D_I$ are smooth over the residue field $\tilde k$.

The \emph{Clemens polytope} $\SX$ is by definition the simplicial subcomplex of the simplex $\Delta^{\IX}$ such that for every non-empty subset $I\subset\IX$, the simplex $\Delta^I$ is a face of $\SX$ if and only if the stratum $D_I$ is non-empty.
As a special case of \cite{Berkovich_Smooth_1999}, one can construct a canonical inclusion map $\theta\colon \SX\hookrightarrow\fX_\eta$ and a canonical strong deformation retraction from $\fX_\eta$ to $\SX$.
For simplicity, we only explain the construction of the retraction map $\tau\colon\fXe\rightarrow\SX$, i.e.\ the final moment of the strong deformation retraction.

Let $\Div_0(\fX)_\R$ denote the vector space of vertical Cartier $\R$-divisors on $\fX$.
It is of dimension the cardinality of $\IX$.
An effective vertical divisor $D$ on $\fX$ is locally given by a function $u$ up to multiplication by invertible functions.
So $\val(u(x))$ defines a continuous function on $\fXe$ which we denote by $\varphi^0_D$.
By linearity, $\varphi^0_D$ makes sense for any divisor $D$ in $\Div_0(\fX)_\R$.
Let $\tau\colon\fXe\rightarrow\Div_0(\fX)_\R^*$ be the evaluation map defined by $\langle\tau(x),D\rangle=\varphi_D^0(x)$ for any $x\in\fXe$, $D\in\Div_0(\fX)_\R$.
The image of $\tau$ can be naturally identified with the Clemens polytope $\SX$.
The identification gives us a canonical embedding
\begin{equation}
\SX\subset \Div_0(\fX)^*_\R\simeq \R^{\IX}.
\end{equation}
We will always regard the Clemens polytope $\SX$ as embedded in $\Div_0(\fX)^*_\R$.

\begin{rem}\label{rem:explicit}
Let $\fS$ denote the standard formal scheme $\fS(n,d,a)$ in \eqref{eq:standard_formal_scheme_(tropicalization_moduli)}.
The retraction map $\tau_\fS\colon\fS_\eta\to S_\fS\subset\R^{d+1}$ can be written explicitly as follows
\begin{align*}
\fS_\eta&\longrightarrow\R^{d+1}\\
x&\longmapsto\big(\val T_0(x),\dots,\val T_d(x)\big).
\end{align*}
The image is the $d$-dimensional simplex in $\R^{d+1}_{\geq 0}$ given by the equation $\sum x_i=\val(a)$.
We remark that in the general case, the retraction map $\tau\colon\fXe\to\SX\subset\R^{\IX}$ is locally of the form above.
\end{rem}

The retraction from the $k$-analytic space $X$ to the Clemens polytope $\SX$ with respect to the formal model $\fX$ is functorial in the following sense.

\begin{prop}[cf.\ {\cite[\S 2]{Yu_Gromov_2014}}]\label{prop:functoriality}
Let $f\colon X\to Y$ be a morphism of $k$-analytic spaces.
Let $\fX$ and $\fY$ be strictly semi-stable formal models of $X$ and $Y$ respectively such that the morphism $f$ extends to a morphism $\ff\colon\fX\to\fY$ of formal schemes.
Let $\tau_\fX\colon\fXe\to\SX$ and $\tau_\fY\colon\fYe\to S_\fY$ denote the retraction maps.
Then there exists a continuous map $S_\ff\colon\SX\to\SY$, which is affine on every simplicial face of $\SX$, such that the diagram
\begin{equation}
\begin{tikzcd}
\fXe \arrow{r}{\ff_\eta} \arrow{d}{\tau_\fX} & \fYe \arrow{d}{\tau_\fY} \\
\SX \arrow{r}{S_{\ff}} & S_\fY
\end{tikzcd}
\end{equation}
commutes.
\end{prop}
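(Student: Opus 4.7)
The plan is to construct $S_\ff$ as the restriction of a global linear map, thereby getting continuity and affineness on faces for free, and to verify the commutativity of the square via the defining formula $\langle\tau_\fX(x),D\rangle=\varphi^0_D(x)$.

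First I would define a pullback map on vertical divisors. Since $\ff\colon\fX\to\fY$ is a morphism of formal schemes over $k^\circ$, any Cartier divisor on $\fY$ supported on the special fiber pulls back to a Cartier divisor on $\fX$ supported on the special fiber (indeed, if $D$ is locally defined by a function $u$ vanishing only on $\fYs$, then $u\circ\ff$ vanishes only on $\fXs$). Extending by $\R$-linearity, this gives a linear map $\ff^*\colon\Div_0(\fY)_\R\to\Div_0(\fX)_\R$. Dualizing yields a linear map
\[
L_\ff\colon \Div_0(\fX)_\R^*\longrightarrow\Div_0(\fY)_\R^*,
\]
which is automatically continuous and affine on every affine subspace. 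I would then define $S_\ff$ to be the restriction of $L_\ff$ to $\SX\subset\Div_0(\fX)_\R^*$.

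The main content is the commutativity of the square, from which it will also follow that $L_\ff(\SX)\subset\SY$. For any $x\in\fXe$ and any effective $D\in\Div_0(\fY)_\R$ locally defined by $u$, the pullback $\ff^*D$ is locally defined by $u\circ\ff_\eta$, so
\[
\varphi^0_{\ff^*D}(x)=\val\bigl(u(\ff_\eta(x))\bigr)=\varphi^0_D(\ff_\eta(x)).
\]
By $\R$-linearity this extends to all $D\in\Div_0(\fY)_\R$. Unwinding the evaluation-map definition of $\tau$, this is precisely
\[
\langle L_\ff(\tau_\fX(x)),D\rangle=\langle\tau_\fY(\ff_\eta(x)),D\rangle,
\]
so $L_\ff\circ\tau_\fX=\tau_\fY\circ\ff_\eta$. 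In particular, $L_\ff(\SX)=L_\ff(\tau_\fX(\fXe))=\tau_\fY(\ff_\eta(\fXe))\subset\tau_\fY(\fYe)=\SY$, so $S_\ff$ is well-defined as a map $\SX\to\SY$. Continuity and affineness on each simplicial face are inherited from the linearity of $L_\ff$.

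The only subtle point I expect is to ensure the vertical pullback construction behaves correctly around strata where $\ff$ may contract components, but since we only ever pull back \emph{vertical} divisors and take valuations of local equations, no Cartier issues arise: the function $u\circ\ff$ is still a regular non-zero-divisor up to units on each affine open, because $\ff$ is a morphism of finitely presented formal $k^\circ$-schemes and a uniformizer pulls back to a uniformizer-multiple. Alternatively, as a sanity check, one could verify the assertion in the local standard model of Remark \ref{rem:explicit}, where $\tau_\fS$ is given by $x\mapsto(\val T_0(x),\dots,\val T_d(x))$, so that a morphism of standard formal schemes induces an explicit monomial-to-linear map between the corresponding simplices, confirming the affine structure.
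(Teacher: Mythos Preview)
The paper does not actually give a proof of this proposition; it merely states the result and defers to \cite[\S 2]{Yu_Gromov_2014}. So there is no ``paper's own proof'' to compare against.

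Your argument is correct and is precisely the proof one would expect given the way the paper sets things up: since $\tau_\fX$ is \emph{defined} as the evaluation map $x\mapsto\big(D\mapsto\varphi^0_D(x)\big)$ into $\Div_0(\fX)_\R^*$, functoriality reduces to the trivial identity $\varphi^0_{\ff^*D}(x)=\varphi^0_D(\ff_\eta(x))$, and the dual linear map $L_\ff$ automatically furnishes the required continuous, face-wise affine map $S_\ff$. The inclusion $L_\ff(\SX)\subset\SY$ falls out of the commutativity, exactly as you note. Your closing remark about potential Cartier issues is well-placed but ultimately unnecessary here: all that is used is that the local equation $u$ of an effective vertical divisor on $\fY$ is invertible on $\fYe$, so $u\circ\ff_\eta$ is invertible on $\fXe$ and $\val\big(u(\ff_\eta(x))\big)$ is well-defined; one never needs $\ff^*D$ to be an honest Cartier divisor on $\fX$ to make sense of $\varphi^0_{\ff^*D}$ and hence of $L_\ff$.
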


\section{Parametrized tropical curves and parametrized tropicalization} \label{sec:parametrized_tropical_curves}

In this section, we introduce the notions of parametrized tropical curves, combinatorial types and degenerations of combinatorial types.
After that, we explain how analytic curves give rise to tropical curves.
We use the settings of \cref{sec:basic_settings_(tropicalization_moduli)}.

\begin{defin}
Let $\Gamma$ be a finite undirected graph.
We denote by $V(\Gamma)$ the set of vertices and by $E(\Gamma)$ the set of edges.
For a vertex $v$ of $\Gamma$, the degree $\deg(v)$ denotes the number of edges connected to $v$.
For two vertices $u,v$ of $\Gamma$, we denote by $E(u,v)$ the set of edges connecting $u$ and $v$.
For a vertex $v$ and an edge $e$ of $\Gamma$, we denote $v\in e$ or $e\ni v$ if $v$ is an endpoint of $e$.
A \emph{flag} of $\Gamma$ is a pair $(v,e)$ consisting of a vertex $v$ and an edge $e$ connected to $v$.
\end{defin}

\begin{defin}\label{def:parametrized_tropical_curve}
An \emph{$n$-pointed parametrized tropical curve} $(\Gamma,(\gamma_i),h)$ in the Clemens polytope $\SX$ consists of the following data:
\begin{enumerate}[(i)]
\item A connected finite graph $\Gamma$ without self-loops.
\item A continuous map $h$ from the topological realization of $\Gamma$ to $\SX$ such that every edge of $\Gamma$ embeds as an affine segment with rational slope in a face of $\SX$.
\item \label{item:tropical_curve:weight} Every flag $(v,e)$ of $\Gamma$ is equipped with a \emph{tropical weight vector} $w_{(v,e)}\in\Z^{\IX}\setminus 0$, parallel to the direction of $h(e)$ pointing away from $h(v)$.
For every edge $e$ of $\Gamma$ and its two endpoints $u$ and $v$, we require that $w_{(u,e)} + w_{(v,e)} = 0$.
\item Every vertex $v$ of $\Gamma$ is equipped with a non-negative integer $g(v)$, called the \emph{genus} of the vertex $v$.
\item A sequence of vertices $\gamma_1,\dots,\gamma_n$ of $\Gamma$, called \emph{marked points}.
They are not required to be different from each other.
For each vertex $v$ of $\Gamma$, we denote by $n(v)$ the number of marked points at $v$.
\end{enumerate}
We denote a $0$-pointed parametrized tropical curve simply by $(\Gamma,h)$.
\end{defin}

\begin{defin}\label{def:type_of_vertices}
Let $(\Gamma,(\gamma_i),h)$ be an $n$-pointed parametrized tropical curve in $\SX$.
We define its \emph{genus} to be the sum
\[b_1(\Gamma)+\sum_{v\in V(\Gamma)} g(v),\]
where $b_1(\cdot)$ denotes the first Betti number.
For every vertex $v$ of $\Gamma$, we define the \emph{sum of weight vectors} around $v$ to be $\sigma_v\coloneqq\sum_{e\ni v} w_{(v,e)}$, summing over all edges connected to $v$.
A vertex $v$ of $\Gamma$ is said to be of \emph{type A} if $\sigma_v$ is nonzero.
Otherwise it is said to be of \emph{type B}.
An $n$-pointed parametrized tropical curve $(\Gamma,(\gamma_i),h)$ is said to be \emph{simple} if every vertex $v$ with $\deg(v)=2$ and $g(v)=n(v)=0$ is of type A.
\end{defin}

\begin{defin}\label{def:simplification}
Given an $n$-pointed parametrized tropical curve $(\Gamma,(\gamma_i),h)$ in $\SX$, one can obtain a unique simple $n$-pointed parametrized tropical curve as follows:
for every vertex $v$ of $\Gamma$ of type B such that $\deg(v)=2$ and $g(v)=n(v)=0$, we remove the vertex $v$, replace the two edges connected to $v$ by a single edge, and set the tropical weight vectors accordingly.
We call this construction \emph{simplification}.
\end{defin}

\begin{defin}\label{def:combinatorial_type}
An \emph{$n$-pointed combinatorial type} $(\Gamma,(\gamma_i))$ in $\SX$ consists of the following data:
\begin{enumerate}[(i)]
\item A connected finite graph $\Gamma$ without self-loops.
\item Every vertex $v$ of $\Gamma$ is equipped with a non-empty subset $I_v\subset\IX$.
\item Every flag $(v,e)$ of $\Gamma$ is equipped with a \emph{tropical weight vector} $w_{(v,e)}\in\Z^{\IX}\setminus 0$.
For every edge $e$ of $\Gamma$ and its two endpoints $u$ and $v$, we require that $w_{(u,e)} + w_{(v,e)} = 0$.
\item Every vertex $v$ of $\Gamma$ is equipped with a non-negative integer $g(v)$, called the \emph{genus} of the vertex $v$.
\item A sequence of vertices $\gamma_1,\dots,\gamma_n$ of $\Gamma$, called \emph{marked points}.
They are not required to be different from each other.
For each vertex $v$ of $\Gamma$, we denote by $n(v)$ the number of marked points at $v$.
\end{enumerate}
\end{defin}

Definitions \ref{def:type_of_vertices} and \ref{def:simplification} carry over to combinatorial types.
Given an $n$-pointed parametrized tropical curve $(\Gamma,(\gamma_i),h)$ in $\SX$, we can associate to it an $n$-pointed combinatorial type $(\Gamma,(\gamma_i))$ by letting $I_v\subset \IX$ be the subset such that the vertex $v$ sits in the relative interior of the face $\Delta^{I_v}$ for every vertex $v$ of $\Gamma$.

\begin{defin}
An $n$-pointed combinatorial type in $\SX$ is said to be \emph{good} if it comes from an $n$-pointed parametrized tropical curve in $\SX$.
\end{defin}

\begin{defin}\label{def:degeneration_combinatorial_type}
An $n$-pointed combinatorial type $(\Gamma,(\gamma_i))$ in $\SX$ is said to be a \emph{degeneration} of an $n$-pointed combinatorial type $(\Gamma',(\gamma'_i))$ in $\SX$ if there exists a surjective map $\phi\colon V(\Gamma')\to V(\Gamma)$ satisfying the following conditions:
\begin{enumerate}[(i)]
\item \label{item:degeneration:edge} For any two vertices $u',v'$ of $\Gamma'$ such that $\phi(u')\neq\phi(v')$, there exists a bijection $\phi\colon E(u',v') \to E(\phi(u'),\phi(v'))$ such that for every $e'\in E(u',v')$, we have $w_{(u',e')}=w_{(\phi(u'),\phi(e'))}$ and $w_{(v',e')}=w_{(\phi(v'),\phi(e'))}$.
Moreover, we require that every edge $e$ of $\Gamma$ is of the form $\phi(e')$ for some edge $e'$ of $\Gamma'$.
\item \label{item:degeneration:face} For every vertex $v'$ of $\Gamma'$, we have $I_{\phi(v')}\subset I_{v'}$.
\item \label{item:degeneration:genus} For every vertex $v$ of $\Gamma$, let $\Gamma'_{v}$ denote the full subgraph of $\Gamma'$ generated by the preimage $\phi\inv(v)$.
We require that $\Gamma'_{v}$ is connected and that
\[g(v)=b_1(\Gamma'_{v})+\sum_{v'\in\phi\inv(v)} g(v'),\]
where $b_1(\cdot)$ denotes the first Betti number.
\item \label{item:degeneration:points} We have $\phi(\gamma'_i)=\gamma_i$ for $1\le i\le n$.
\end{enumerate}
\end{defin}

\bigskip

Now given a connected compact quasi-smooth strictly \kanal curve $C$, $n$ marked points $s_i\in C$ and a morphism $f\colon C\to X$, we can obtain a simple $n$-pointed parametrized tropical curve $(\Gamma,(\gamma_i),h)$ in $\SX$ by the following steps.

\medskip
\noindent\emph{Step 1.}
Up to passing to a finite separable extension of $k$, there exists a strictly semi-stable formal model $\fC$ of $C$ and a morphism of formal schemes $\ff\colon \fC\rightarrow\fX$, such that $\ff_\eta\simeq f$.
This a consequence of \cite[Proposition 5.1]{Yu_Balancing_2013} using the correspondence between finite sets of type II points in $C$ and semi-stable reductions of $C$ (see \cite{Ducros_Structure_2012,Baker_Structure_2013}).
When $C$ is proper, this is also a special case of \cite[Theorem 1.5]{Yu_Gromov_2014}.
Let $S_\fC$ be the Clemens polytope for $\fC$ and $\tau_\fC\colon C\to S_\fC$ the retraction map.
By \cref{prop:functoriality}, we obtain a map $S_\ff\colon S_\fC\to\SX$.
We put $\Gamma_0\coloneqq S_\fC$ and $h_0\coloneqq S_\ff$.
For every vertex $v$ of $\Gamma_0$, let $\fC_s^v$ denote the corresponding irreducible component of $\fC_s$.
We set $g(v)$ to be the genus of $\fC_s^v$.
We add to $\Gamma_0$ the marked points $\tau_\fC(s_i)$, creating new vertices if necessary.

\medskip
\noindent\emph{Step 2.}
For every flag $(v,e)$ of $\Gamma_0$, we define a weight vector $w_{(v,e)}\in\Z^{\IX}$ as follows (see also \cite[\S 5]{Yu_Balancing_2013}).
Let $e^\circ$ denote the relative interior of the edge $e$.
The inverse image $\tau_\fC\inv(e^\circ)$ is an open annulus in $C$, which we denote by $A$.
Fix $i\in\IX$ and let $p_i\colon\R^{\IX}\rightarrow\R$ be the projection to the $i^\text{th}$ coordinate.
By \cref{rem:explicit}, the map $p_i\circ\tau\circ f|_{A}$ is given by the valuation of a certain invertible function $f_i$ on $A$.
Let $z$ be a coordinate on the annulus $A$ such that the annulus $A$ is given by $c_1<|z|<c_2$ and that $c_1$ corresponds to the vertex $v$.
We write $f_i=\sum_{m\in\Z}f_{i,m} z^m$, where $f_{i,m}\in k$.
Since $f_i$ is invertible on $A$, there exists $m_i\in\Z$ such that $|f_{i,m_i}|r^{m_i}>|f_{i,m}|r^m$ for all $m\neq m_i$ and $c_1<r<c_2$.
We set the $i^\text{th}$ component of the weight vector $w_{(v,e)}$ to be $m_i$.
By construction, the weight vector $w_{(v,e)}$ is parallel to the direction of $h_0(e)$ pointing away from $h_0(v)$.
It is zero if and only if the edge $e$ is mapped to a point by $h_0$.

\medskip
\noindent\emph{Step 3.}
For every connected subgraph $\Gamma'$ of $\Gamma_0$ that is mapped to a point by $h_0$, we contract it to a vertex $v'$ and set
\[ g(v') = b_1(\Gamma') + \sum_{v\in \Gamma'} g(v).\]
We denote the resulting $n$-pointed parametrized tropical curve by $(\Gamma,(\gamma_i),h)$.
After that, we replace $(\Gamma,(\gamma_i),h)$ by its simplification (see \cref{def:simplification}).
We note that Step 3 removes the dependence of our construction on the choices of the finite extension of $k$ and the formal model $\fC$ of $C$.

\section{Vanishing cycles and balancing conditions}\label{sec:balancing_conditions}

We use the settings of \cref{sec:basic_settings_(tropicalization_moduli)}.

Let $C$ be a connected proper smooth strictly $k$-analytic curve and let $f\colon C\to X$ be a morphism.
Let $(\Gamma,h)$ be the associated parametrized tropical curve in $\SX$ constructed in \cref{sec:parametrized_tropical_curves}.

The local shape of the tropical curve $(\Gamma,h)$ sitting inside $\SX$ satisfies a distinguished geometrical property, which we refer to as the \emph{balancing condition}.
It is studied in \cite[Theorem 1.1]{Yu_Balancing_2013}; see also \cite[Proposition 1.15]{Gross_Logarithmic_2013} for a related statement.
Here we give a different proof using semi-stable reduction of \kanal curves, which will be useful for later sections.

Let $v$ be a vertex of $\Gamma$.
Assume that $h(v)$ sits in the relative interior of the face $\Delta^{I_v}$ of $\SX$ corresponding to a subset $I_v\subset I_\fX$.
Let $D_{I_v}$ denote the closed stratum in the special fiber $\fXs$ corresponding to the face $\Delta^{I_v}$.

Let $Z_1(D_{I_v})$ denote the group of one-dimensional algebraic cycles in $D_{I_v}$ with integer coefficients.
Let $Z_1^+(D_{I_v})$ denote the submonoid consisting of effective cycles, i.e.\ cycles with non-negative integer coefficients.
Let $\alpha$ be the map
\begin{align}
\begin{split}\label{eq:map_alpha}
\alpha\colon Z_1^+(D_{I_v})&\longrightarrow \Z^{\IX}\\
Z \qquad &\longmapsto \Big( Z\cdot\cO(D_i)|_{D_{I_v}},\ i\in \IX\,\Big),
\end{split}
\end{align}
which sends a one-dimensional cycle $Z$ to its intersection numbers with the restrictions $\cO(D_i)|_{D_{I_v}}$ for every $i\in \IX$.

\begin{thm}\label{thm:balancing_conditions}
The sum $\sigma_v$ of weight vectors around the vertex $v$ lies in the image of the map $\alpha$ defined in \eqref{eq:map_alpha}.
\end{thm}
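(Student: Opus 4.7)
My plan is to produce an explicit effective one-cycle $Z \in Z_1^+(D_{I_v})$ and verify $\alpha(Z) = \sigma_v$ by combining the projection formula on a semi-stable formal model of $C$ with the key lemma (\cref{lem:tropical_weight_vector_as_intersection_number}) that reads the tropical weight vectors off local intersection numbers via vanishing cycles. The proof splits into a purely combinatorial reduction from $\Gamma$ down to the ``full'' dual graph $\Gamma_0 = S_\fC$, and a geometric per-vertex identity on $\Gamma_0$.

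Concretely, I would fix a strictly semi-stable model $\fC$ of $C$ together with an extension $\ff\colon \fC\to\fX$ of $f$ as in Step~1 of \cref{sec:parametrized_tropical_curves}. Every vertex $v \in V(\Gamma)$ arises, via the contraction and simplification of Step~3, from a well-defined set $\widetilde V(v) \subset V(\Gamma_0)$; for each $v'\in\widetilde V(v)$ one has $h_0(v') = h(v)$, whence $I_{v'} = I_v$ and $\ff_s(\fC_s^{v'}) \subset D_{I_v}$. Take
\[
Z \;:=\; \sum_{v'\in\widetilde V(v)} \ff_{s,*}\bigl[\fC_s^{v'}\bigr] \;\in\; Z_1^+(D_{I_v}).
\]
Using the flag relation $w_{(u',e')} + w_{(v',e')} = 0$, the weight vectors on edges internal to a contracted subgraph cancel in pairs, while the external edges correspond bijectively to edges of $\Gamma$ at $v$ with matching weight vectors; the simplification step preserves the sum because every degree-2 type-B vertex has $w_{(v',e_1)} + w_{(v',e_2)} = 0$ by definition. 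Hence $\sigma_v = \sum_{v'\in\widetilde V(v)} \sigma_{v'}$ with each $\sigma_{v'}$ computed in $\Gamma_0$, and by additivity of $\alpha$ it suffices to prove the single-vertex identity $\sigma_{v'} = \alpha\bigl(\ff_{s,*}[\fC_s^{v'}]\bigr)$ for each $v'\in V(\Gamma_0)$.

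For this identity, I would apply the projection formula to rewrite the $i$-th coordinate of $\alpha\bigl(\ff_{s,*}[\fC_s^{v'}]\bigr)$ as the degree $\deg\bigl(\ff^*\cO(D_i)|_{\fC_s^{v'}}\bigr)$. On the regular semi-stable surface $\fC$ the vertical Cartier divisor $\ff^*(D_i)$ decomposes along the components of $\fC_s$, and the relation $\fC_s\cdot\fC_s^{v'} = 0$ reduces this degree to a sum, over the flags $(v',e')$ at $v'$, of local intersection multiplicities at the corresponding nodes of $\fC_s$. The key lemma (\cref{lem:tropical_weight_vector_as_intersection_number}) identifies each such local intersection number with the $i$-th component of $w_{(v',e')}$, closing the argument.

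The hard part is the passage between the slope of $\val f^*u_i$ along an annulus (how the weight vector was defined in Step~2) and a local intersection multiplicity at the corresponding node---in other words, applying the key lemma correctly. One has to track the length of the edge, encoded by the valuation $\val a$ of the local equation $T_0T_1 = a$ (which may exceed $\val\varpi$), the multiplicity $m_{v'}^{(i)} = h_0(v')_i$ with which $\ff^*(D_i)$ contains $\fC_s^{v'}$, and any horizontal contribution to $\ff^*(D_i)$. Routing these contributions through the functor of vanishing cycles, as in the cited lemma, is what makes the bookkeeping clean.
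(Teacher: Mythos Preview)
Your proposal is correct and matches the paper's proof: reduce from $\Gamma$ to $\Gamma_0$, take $Z=\sum_{v'}\ff_{s,*}[\fC_s^{v'}]$, and invoke \cref{lem:tropical_weight_vector_as_intersection_number} together with the projection formula. One small point: the lemma you cite already gives the \emph{summed} identity $\sigma_{v'}^i=\deg\bigl((\ff_s|_{\fC_s^{v'}})^*\cO(D_i)\bigr)$, so your intermediate decomposition of $\ff^*(D_i)$ into node-by-node contributions (and the per-edge reading of the lemma) is unnecessary---the per-vertex equality $\sigma_{v'}=\alpha(\ff_{s,*}[\fC_s^{v'}])$ follows immediately, exactly as in the paper's concluding paragraph.
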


Now we explain the proof of Theorem \ref{thm:balancing_conditions}.

Let $k^s$ be a separable closure of $k$, $\widehat{k^s}$ its completion, and $\widetilde{k^s}$ its residue field.
We fix a prime number $\ell$ different from the characteristic of the residue field $\widetilde k$.
Let $R\Psi$ and $R\Phi$ denote the derived functors of nearby cycles and of vanishing cycles respectively (cf.\ \cite[\S 3]{Yu_Balancing_2013} and \cite{Berkovich_Vanishing_1994}\footnote{We use the terminology in \cite{Yu_Balancing_2013}, which is different from \cite{Berkovich_Vanishing_1994,Berkovich_Vanishing_II_1996}.}).
We have the following exact triangle
\begin{equation}\label{eq:exact_triangle}
\QXbs\rightarrow R\Psi\QXe \rightarrow R\Phi\QXe \xrightarrow{+1},
\end{equation}
where $\Q_{\ell,-}$ denotes the constant sheaf with values in $\Q_\ell$, and $\fXbs = \fXs\times \widetilde{k^s}$.

Let $\oD_{I_v}=D_{I_v}\times\widetilde{k^s}$ and let $j\colon \oD_{I_v}\hookrightarrow\fXbs$ denote the closed immersion.
We apply $j^*$ to \eqref{eq:exact_triangle} and take global sections, we obtain a long exact sequence
\[\cdots\to R^1\Gamma\big(j^* R\Psi \QXe\big)\xrightarrow{\beta^*} R^1\Gamma\big(j^* R\Phi\QXe\big)\xrightarrow{\alpha^*} H^2\et\big(\oD_{I_v},\Q_\ell)\to\cdots,\]
where $\alpha^*$ is the boundary map.

Put $J=J_{I_v}=\Set{j\in\IX|D_{I_v\cup\{j\}}\neq\emptyset}$.
Recall that we have the following calculation of the sheaf of vanishing cycles for a strictly semi-stable formal scheme $\fX$ over $k^\circ$.

\begin{thm}[cf.\ {\cite[Corollary 3.2]{Yu_Balancing_2013}}, \cite{Rapoport_Lokale_1982}, \cite{Illusie_Semistable_2004}]\label{thm:vanishing_cycles}
We have an isomorphism
\[R^1\Gamma\left(j^* R\Phi\QXe\right)\simeq \Coker\big(\Q_\ell\xrightarrow{\Delta}\Q_\ell^J\big)(-1) ,\]
where $\Delta$ denotes the diagonal map and the symbol $(-1)$ denotes the Tate twist.
Moreover, the boundary map
\[\alpha^*\colon \Coker\big(\Q_\ell\xrightarrow{\Delta}\Q_\ell^J\big)(-1)\longrightarrow R^2\Gamma\big(j^*\QXbs\big)\simeq H^2\et\big(\oD_{I_v},\Q_\ell\big)\]
is induced by the cycle class map in étale cohomology.
\end{thm}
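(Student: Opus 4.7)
The plan is to reduce the statement to a standard étale-local computation on the model $\fS(n,d,\varpi)$ and then invoke the classical computation of nearby cycles on a strictly semi-stable formal scheme due to Rapoport–Zink and Illusie. Since $R\Phi$ commutes with étale base change and strict semi-stability is étale-local, it suffices, for computing $j^*R\Phi\QXe$ on $\oD_{I_v}$, to replace $\fX$ by a sufficiently small étale neighborhood of a point of $\oD_{I_v}$ and thereby pass to the standard model. Under this reduction, the components of $\fXs$ meeting the stratum $\oD_{I_v}$ are precisely those indexed by $J = J_{I_v}$, with the $|I_v|$ of them containing $\oD_{I_v}$ corresponding to the coordinates appearing in the equation $T_0\cdots T_d = \varpi$.

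For the first assertion, I would invoke the Rapoport–Zink computation of $R\Psi$ for a strictly semi-stable scheme, which yields $R^q\Psi\QXe \simeq \bigwedge^q R^1\Psi\QXe$ together with a canonical short exact sequence
\[
0 \longrightarrow \QXbs \longrightarrow R^1\Psi\QXe|_{\oD_{I_v}} \longrightarrow \Coker\!\big(\Q_\ell \xrightarrow{\Delta} \Q_\ell^{J}\big)(-1) \longrightarrow 0
\]
in the notation of \cite{Rapoport_Lokale_1982,Illusie_Semistable_2004}. Inserting this in the exact triangle \eqref{eq:exact_triangle} and passing to cohomology sheaves gives a canonical identification of the constructible sheaf $R^1\Phi\QXe|_{\oD_{I_v}}$ with the constant sheaf $\Coker(\Q_\ell \to \Q_\ell^J)(-1)$. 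Since $\oD_{I_v}$ is geometrically irreducible by condition (ii) of \cref{def:sss_formal_model}, taking $R\Gamma$ on this constant sheaf recovers the group $\Coker(\Q_\ell \to \Q_\ell^J)(-1)$ itself, yielding the first half of the theorem.

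For the second assertion, I would unwind the boundary morphism $\alpha^*$. Under the identification above, a generator of the $j$-th summand of $\Q_\ell^J(-1)$ corresponds, via the Kummer sequence $1 \to \mu_{\ell^n} \to \cO^\times \to \cO^\times \to 1$, to the class of the local equation of the component $D_j$ in $H^1(-,\mu_{\ell^n})$. The connecting map to $H^2_{\text{ét}}(\oD_{I_v},\Q_\ell)\simeq H^2_{\text{ét}}(\oD_{I_v},\Q_\ell(1))(-1)$ then sends it to the first Chern class $c_1\bigl(\cO_\fX(D_j)|_{\oD_{I_v}}\bigr)$, which is precisely the étale cycle class of the effective Cartier divisor $D_j\cap\oD_{I_v}$ on $\oD_{I_v}$. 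The factorization through the cokernel of the diagonal is forced by the relation $T_0\cdots T_d = \varpi$, so that $\sum_{i\in I_v} c_1(\cO_\fX(D_i))|_{\oD_{I_v}} = c_1(\cO_\fX(\fXs))|_{\oD_{I_v}} = 0$; this is exactly the statement that the cycle class map descends to the quotient.

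The principal obstacle is the compatibility bookkeeping for the Tate twists and the verification that the local étale computation of $R\Psi$ really does glue to a global identification of $R^1\Phi\QXe|_{\oD_{I_v}}$ with the advertised constant sheaf --- which ultimately rests on the functoriality of the Kummer boundary map and on the fact that the indexing set $J_{I_v}$ is intrinsic to the stratum. Once these standard ingredients are in place, the asserted isomorphism and the identification of $\alpha^*$ with the cycle class map follow formally.
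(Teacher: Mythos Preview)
The paper does not actually prove this theorem; it is quoted as a known computation with references to \cite{Yu_Balancing_2013,Rapoport_Lokale_1982,Illusie_Semistable_2004}. Your overall strategy --- reduce via étale-local invariance of $R\Phi$ to the Rapoport--Zink description, then identify the boundary map through the Kummer sequence and Chern classes --- is the right one and matches those references.

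There is, however, a genuine error in your sketch. You assert that $j^*R^1\Phi\QXe$ is the \emph{constant} sheaf on $\oD_{I_v}$ with value $\Coker(\Q_\ell\xrightarrow{\Delta}\Q_\ell^J)(-1)$. It is not. The Rapoport--Zink computation gives $R^1\Phi\QXe\simeq(a_*\Q_\ell/\Q_\ell)(-1)$, where $a\colon\coprod_{i\in\IX}\oD_i\to\fXbs$ is the normalization of the special fiber; the stalk of this sheaf at a point $x\in\oD_{I_v}$ is $\Coker(\Q_\ell\to\Q_\ell^{I_x})(-1)$ with $I_x=\{i:x\in\oD_i\}$, and this jumps along the deeper strata $\oD_{I_v\cup\{j\}}$ for $j\in J\setminus I_v$. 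What is true is that
\[
H^0\bigl(\oD_{I_v},\,j^*a_*\Q_\ell\bigr)=\bigoplus_{j\in J}H^0\bigl(\oD_{I_v\cup\{j\}},\Q_\ell\bigr)=\Q_\ell^J,
\]
using that each $\oD_{I_v\cup\{j\}}$ is geometrically irreducible; since for $j\in I_v$ the summand is $H^0(\oD_{I_v},\Q_\ell)$ itself, the restriction map $H^1(\oD_{I_v},\Q_\ell)\to H^1(\oD_{I_v},j^*a_*\Q_\ell)$ is injective, and hence $H^0(\oD_{I_v},j^*R^1\Phi\QXe)=\Coker(\Q_\ell\to\Q_\ell^J)(-1)$. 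Combined with $R^0\Phi=0$, this yields the asserted value of $R^1\Gamma(j^*R\Phi\QXe)$.

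The same confusion surfaces in your second paragraph: the relation forcing descent to the cokernel of the diagonal is $\sum_{j\in J} c_1\bigl(\cO(D_j)|_{\oD_{I_v}}\bigr)=c_1\bigl(\cO(\fXs)|_{\oD_{I_v}}\bigr)=0$, a sum over \emph{all} of $J$, not just over $I_v$. Your version $\sum_{i\in I_v}$ is only correct at a generic point of the stratum and is false globally whenever $J\supsetneq I_v$.
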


Let $\fC$, $\ff\colon\fC\to\fX$, $(\Gamma_0,h_0)$ be as in \cref{sec:parametrized_tropical_curves}.
We observe that in order to prove \cref{thm:balancing_conditions} for the vertex $v$ of $(\Gamma,h)$, it suffices to prove that for every vertex $v_0$ of $(\Gamma_0,h_0)$ that maps to $v$, the sum $\sigma_{v_0}$ of weight vectors around $v_0$ lies in the image of the map $\alpha$ defined in \eqref{eq:map_alpha}.

Let $v_0$ be a vertex of $\Gamma_0$ and let $\fC_s^{v_0}$ denote corresponding irreducible component of the special fiber $\fC_s$.
The assumption that $h_0(v_0)$ lies in the relative interior of the face $\Delta^{I_v}$ implies that the image of $\fC_s^{v_0}$ under the map $\ff_s$ is contained in $D_{I_v}$.

Let $\fCbs = \fC_s\times\widetilde{k^s}$, $\fC^{v_0}_{\bar s} = \fC^{v_0}_s\times\widetilde{k^s}$.
Denote by $j_C\colon \fC^{v_0}_{\bar s}\rightarrow \fCbs$ the closed immersion.
Put $\ff_{\bar s}\coloneqq \ff_s\times\widetilde{k^s}\colon\fCbs\to\fXbs$.
The properness of $\fC_s^{v_0}$ implies an isomorphism $H\et^2\big(j_C^*\QCs\big)\simeq\Q_\ell(-1)$.

\begin{lem}
We have the following commutative diagram:
\begin{equation}
\begin{tikzcd}[column sep=small]\label{eq:cd_vanishing1}
R^1\Gamma\big(j^*R\Psi\QXe\big) \arrow{r}{\beta^*} \arrow{d}{\ff^*_\Psi} 
& R^1\Gamma\big(j^*R\Phi\QXe\big) \arrow{r}{\alpha^*} \arrow{d}{\ff^*_\Phi}
& H\et^2\big(j^*\QXbs\big) \arrow{d}{\ff^*_{s}} & \\
R^1\Gamma\big(j_C^*R\Psi\QCe\big) \arrow{r}{\beta_C^*} 
& R^1\Gamma\big(j_C^*R\Phi\QCe\big) \arrow{r}{\alpha_C^*}
& H\et^2\big(j_C^*\QCs\big) \arrow{r}{\simeq} & \Q_\ell(-1).
\end{tikzcd}
\end{equation}
\end{lem}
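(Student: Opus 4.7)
The plan is to construct the three vertical arrows from the functoriality of the nearby and vanishing cycles functors with respect to the morphism $\ff\colon\fC\to\fX$, and then to read off commutativity of the two squares from the naturality of the exact triangle \eqref{eq:exact_triangle}.

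First I would set up the vertical maps. The assumption that $h_0(v_0)$ lies in the relative interior of $\Delta^{I_v}$ says exactly that $\ff_s(\fC_s^{v_0})\subset D_{I_v}$, so after base change to $\widetilde{k^s}$ the composition $\ff_{\bar s}\circ j_C$ factors as $j\circ g$ for a uniquely determined morphism $g\colon\fC^{v_0}_{\bar s}\to\oD_{I_v}$. Because $\Q_\ell$ is a constant sheaf we have $\ff_\eta^*\QXe=\QCe$, and the standard functoriality of nearby cycles (cf.\ \cite[\S 3]{Yu_Balancing_2013} and \cite{Berkovich_Vanishing_1994}) supplies a natural comparison morphism $\ff_{\bar s}^*R\Psi\QXe\to R\Psi\QCe$ of $\ell$-adic sheaves on $\fCbs$, and analogously one for $R\Phi$. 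Composing pullback along $g$ with these comparison morphisms and passing to hypercohomology yields the vertical maps $\ff^*_\Psi$ and $\ff^*_\Phi$ in degree $1$. The rightmost map $\ff^*_s$ is simply the usual étale pullback $g^*\colon H\et^2(\oD_{I_v},\Q_\ell)\to H\et^2(\fC^{v_0}_{\bar s},\Q_\ell)$.

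Second, I would derive the commutativity of each square from the naturality of \eqref{eq:exact_triangle} under the morphism $\ff$. Applying $\ff_{\bar s}^*$ to the triangle for $\fX$ produces an exact triangle with terms $\ff_{\bar s}^*\QXbs$, $\ff_{\bar s}^*R\Psi\QXe$, $\ff_{\bar s}^*R\Phi\QXe$; the comparison morphisms for the constant sheaf, for $R\Psi$ and for $R\Phi$ together assemble into a morphism from this triangle to the corresponding triangle on $\fCbs$. This morphism of triangles exists precisely because $R\Phi$ is constructed as the cone of $\QXbs\to R\Psi\QXe$ and the comparison morphisms are defined compatibly with this cone. Pulling back along $j_C$, using the identification $j_C^*\ff_{\bar s}^*=g^*j^*$ coming from $\ff_{\bar s}\circ j_C=j\circ g$, and then taking the associated long exact sequences in hypercohomology, gives exactly the diagram \eqref{eq:cd_vanishing1}.

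The only real point of care, rather than a genuine obstacle, is to verify that the comparison morphism for $R\Phi$ in the Berkovich formalism is indeed induced on cones by those for $\QXbs$ and $R\Psi$, so that the connecting maps $\alpha^*$ and $\alpha_C^*$ are matched by $\ff^*_\Phi$ and $\ff^*_s$. This compatibility is built into the construction in \cite{Berkovich_Vanishing_1994}, so no further work is needed beyond unwinding the definitions.
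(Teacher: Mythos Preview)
Your argument is correct and rests on the same idea as the paper's proof: the vertical maps come from the functoriality of the nearby and vanishing cycles functors under $\ff$, and the two squares commute because these comparison maps are compatible with the exact triangle \eqref{eq:exact_triangle}.

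The only difference is one of packaging. You work contravariantly: you use the pullback comparison $\ff_{\bar s}^*R\Psi\QXe\to R\Psi\QCe$, invoke the factorization $\ff_{\bar s}\circ j_C=j\circ g$ to identify $j_C^*\ff_{\bar s}^*$ with $g^*j^*$, and then pass to hypercohomology. The paper works covariantly: it uses the pushforward comparison $R\Psi\QXe\to R\ff_{\bar s*}R\Psi\QCe$ from \cite[Corollary~4.5(ii)]{Berkovich_Vanishing_1994}, combines it with the adjunction $\cF\to j_{C*}j_C^*\cF$, and uses the support condition $\ff_s(\fC_s^{v_0})\subset D_{I_v}$ to factor $R\ff_{\bar s*}j_{C*}j_C^*\cF$ through $j_*j^*$. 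The two routes are adjoint to one another and yield the same morphisms on cohomology; your version is arguably the more transparent of the two, while the paper's version makes the role of the support hypothesis more explicit and avoids naming the auxiliary map $g$.
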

\begin{proof}
For any étale sheaf $\cF$ on $\fCbs$, we have the adjunction morphism
\begin{equation}\label{eq:ajunction}
\cF\rightarrow j_{C*} j_C^* \cF .
\end{equation}
Applying the derived pushforward functor $R\ff_{\bar s*}$ to both sides of \eqref{eq:ajunction}, we obtain a morphism
\[R\ff_{\bar s*}\cF\rightarrow R\ff_{\bar s*} j_{C*} j_C^*\cF .\]
Since the image of $\fC^{v_0}_s$ under the map $\ff_s$ is contained in $D_{I_v}$, the sheaf $R\ff_{\bar s*}j_{C*} j_C^*\cF$ is supported on $\oD_{I_v}\subset \fXbs$. Therefore, we obtain a morphism
\begin{equation}\label{eq:balancing1}
j_* j^* R\ff_{\bar s*}\cF\rightarrow R\ff_{\bar s*}j_{C*} j_C^*\cF .
\end{equation}
Moreover, by \cite[Corollary 4.5(ii)]{Berkovich_Vanishing_1994}, we obtain a morphism 
\[R\Psi\QXe\rightarrow R\Psi Rf_*\QCe \simeq R\ff_{\bar s*} R\Psi\QCe .\]
Applying $j_* j^*$, we obtain a morphism
\begin{equation}\label{eq:balancing2}
j_*j^* R\Psi\QXe\rightarrow j_* j^* R\ff_{\bar s*} R\Psi\QCe.
\end{equation}
Substituting $\cF$ by $R\Psi\QCe$ in \eqref{eq:balancing1}, we obtain a morphism
\[j_* j^* R\ff_{\bar s*} R\Psi\QCe \rightarrow R\ff_{\bar s*} j_{C*} j_C^* R\Psi\QCe .\]
Combining with \eqref{eq:balancing2} and taking global sections, we obtain a map
\[\ff^*_\Psi\colon R^1\Gamma\big(j^* R\Psi\QXe\big)\rightarrow R^1\Gamma\big(j_C^* R\Psi\QCe\big).\]
Similarly, we have maps
\[\ff^*_\Phi\colon R^1\Gamma\big(j^* R\Phi\QXe\big)\rightarrow R^1\Gamma\big(j_C^* R\Phi\QCe\big),\]
and
\[\ff^*_s\colon H^2\et\big(j^*\QXbs\big)\rightarrow H^2\et\big(j_C^*\QCs\big).\]
Now the commutativity of \eqref{eq:cd_vanishing1} follows from the functoriality of nearby cycles and vanishing cycles.
\end{proof}

By Theorem \ref{thm:vanishing_cycles}, we have an isomorphism
\begin{equation}\label{eq:vanishing_cycles_at_DIv}
R^1\Gamma\big(j^*R\Phi\QXe\big)\simeq \Coker\big(\Q_\ell\xrightarrow{\Delta}\Q_\ell^J\big)(-1) .
\end{equation}

For each edge $e$ connected to $v_0$, the weight vector $w_{(v_0,e)}$ lives in $\Ker\big(\Z^J\xrightarrow{\Sigma}\Z\big)$.
So it induces a linear map by duality
\[w_{(v_0,e)}^*\colon R^1\Gamma\big(j^*R\Phi\QXe\big) \rightarrow \Q_\ell(-1).\]

Let $p_c\in\fC_s$ be the point corresponding to the edge $e$.
Let $j_{p_c}\colon p_c\times\widetilde{k^s}\hookrightarrow {\fCbs^{v_0}}$ denote the inclusion map.
By Theorem \ref{thm:vanishing_cycles} again, we have an isomorphism
\[R^1\Gamma\big(j_{p_c}^*R\Phi\QCe\big)\simeq \Coker\big(\Q_\ell\xrightarrow{\Delta} \Q_\ell\oplus \Q_\ell\big)(-1).\]

Let $s$ be the projection map
\begin{align*}
R^1\Gamma\big(j_{p_c}^*R\Phi\QCe\big)\simeq \Coker\big(\Q_\ell\xrightarrow{\Delta} \Q_\ell\oplus \Q_\ell\big)(-1)&\longrightarrow \Q_\ell(-1)\\
(x_0,x_1)&\longmapsto x_1-x_0,
\end{align*}
where the component $x_0$ corresponds to the irreducible component $\fC^{v_0}_s$.
Let $r_C$ be the restriction map
\[R^1\Gamma\big(j_C^*R\Phi\QCe\big)\rightarrow R^1\Gamma\big(j_{p_c}^*R\Phi\QCe\big) .\]

\begin{lem}\label{lem:composition}
The composition $s\circ r_C\circ \ff^*_\Phi$ of the following morphisms
\[ R^1\Gamma\big(j^*R\Phi\QXe\big) \xrightarrow{\ff^*_\Phi}R^1\Gamma\big(j_C^*R\Phi\QCe\big)\xrightarrow{r_C}R^1\Gamma\big(j_{p_c}^*R\Phi\QCe\big)\xrightarrow{s} \Q_\ell(-1)\]
is equal to the map
\[w_{(v_0,e)}^* \colon R^1\Gamma\big(j^*R\Phi\QXe\big)\rightarrow\Q_\ell (-1).\]
\end{lem}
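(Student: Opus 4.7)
The plan is to reduce the identity to an explicit computation in local coordinates at the node $p_c$. Choose coordinates $z_0, z_1$ on a formal étale neighborhood of $p_c$ in $\fC$ with $z_0 z_1 = a_C$ for some $a_C \in k^{\circ\circ}\setminus 0$, such that $z_0 = 0$ cuts out $\fC_s^{v_0}$. Using the strictly semi-stable local model of $\fX$ near $\ff(p_c)$, one may write $\ff^* T_i = u_i z_0^{a_i} z_1^{b_i}$ for each $i \in J = J_{I_v}$, where $T_i$ is a local equation for $D_i$, $u_i$ is a unit, and $a_i, b_i \in \Z_{\geq 0}$.

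First, I would unpack the isomorphism of \cref{thm:vanishing_cycles}. Following Rapoport--Zink and Illusie, the cokernel $\Coker\bigl(\Q_\ell \xrightarrow{\Delta} \Q_\ell^J\bigr)(-1)$ is generated by the Kummer cohomology classes $[T_i]$ for $i \in J$, with the single relation $\sum_{i \in I_v}[T_i] = [\varpi] \equiv 0$ modulo the diagonal. The analogous identification at $p_c$ yields $\Coker\bigl(\Q_\ell \to \Q_\ell^{\{0,1\}}\bigr)(-1)$ with basis the classes $[z_0], [z_1]$ modulo $[z_0] + [z_1] = [a_C] \equiv 0$.

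Next, I would check that $r_C \circ \ff^*_\Phi$ is induced by the pullback of Kummer classes via $\ff$: the class $[T_i]$ is sent to $[\ff^* T_i] = a_i [z_0] + b_i [z_1]$, i.e.\ to $(a_i, b_i)$ in $\Q_\ell^{\{0,1\}}/\Delta$. Applying $s$ then yields $b_i - a_i$. To finish, I would compare with $w_{(v_0, e)}^*$ applied to the class of $T_i$: by Step 2 of \cref{sec:parametrized_tropical_curves}, the $i$-th component of $w_{(v_0, e)}$ is the dominant-monomial exponent of $\ff^* T_i|_A$ expanded as a Laurent series in a coordinate on the annulus $A = \tau_\fC^{-1}(e^\circ)$. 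Substituting $z_1 = a_C/z_0$ into $u_i z_0^{a_i} z_1^{b_i}$, one reads off that this exponent equals $b_i - a_i$ up to the sign determined by the convention for which end of $A$ is identified with $v_0$. Under the duality pairing between $\Ker\bigl(\Z^J \xrightarrow{\Sigma} \Z\bigr)$ and $\Q_\ell^J/\Delta$, the value of $w_{(v_0, e)}^*$ on $[T_i]$ is therefore the same, matching the formula obtained via $s \circ r_C \circ \ff^*_\Phi$.

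The main obstacle will be verifying that the pullback map $\ff^*_\Phi$, constructed from the adjunction morphisms \eqref{eq:balancing1}--\eqref{eq:balancing2} and the functoriality of nearby and vanishing cycles, is indeed compatible with the Kummer-class pullback appearing in the identification of \cref{thm:vanishing_cycles}. This requires tracing carefully through the identifications in the proof of that theorem and the underlying semi-stable comparison results; once this compatibility is established, the lemma reduces to the monomial computation sketched above.
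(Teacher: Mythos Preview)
Your approach is correct and ultimately recovers the same content as the paper's proof, but the paper takes a shortcut you may find useful. Rather than computing $r_C \circ \ff^*_\Phi$ directly in local coordinates on $\fC$, the paper first introduces the image point $p = \ff_s(p_c) \in \fXs$, the closed immersion $j_p\colon p\times\widetilde{k^s}\hookrightarrow\fXbs$, and the restriction map $r\colon R^1\Gamma(j^* R\Phi\QXe) \to R^1\Gamma(j_p^* R\Phi\QXe)$. An easy functoriality argument yields a commutative square showing $r_C \circ \ff^*_\Phi = \ff^*_{\Phi,p_c} \circ r$, which reduces the lemma to the purely pointwise identity $s \circ \ff^*_{\Phi,p_c} \circ r = w_{(v_0,e)}^*$; the paper then dispatches this by citing \cite[Lemma~5.8]{Yu_Balancing_2013} and \cite[Corollary~3.5]{Berkovich_Vanishing_II_1996}.

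Your explicit Kummer-class computation is exactly what underlies those citations, so the ``main obstacle'' you flag is not a genuine gap but rather the content of the cited results. The advantage of your route is self-containment; the advantage of the paper's is that factoring through $p$ isolates the compatibility question at a single closed point, where the strictly semi-stable local model is literally $\fS(n,d,\varpi)$ and the Rapoport--Zink description of $R^1\Phi$ via Kummer classes is the standard one. At that level the compatibility of $\ff^*_{\Phi,p_c}$ with pullback of Kummer classes becomes essentially tautological, whereas verifying it globally over $\oD_{I_v}$ as you propose would require tracking the identification of \cref{thm:vanishing_cycles} through the adjunction morphisms more carefully. Either way the monomial computation you sketch (matching $b_i - a_i$ with the dominant exponent defining $w^i_{(v_0,e)}$, up to the orientation convention on the annulus) is the correct endgame.
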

\begin{proof}
Let $p=\ff_s(p_c)\in \fXs$, let $j_p\colon p\times \widetilde{k^s}\hookrightarrow\fXbs$ be the closed immersion, and let \[r\colon R^1\Gamma\big(j^*R\Phi\QXe\big)\rightarrow R^1\Gamma\big(j_p^*R\Phi\QXe\big)\] be the restriction map.
We have a commutative diagram
\begin{equation}\label{eq:cd_vanishing2}
\begin{tikzcd}
R^1\Gamma\big(j^*R\Phi\QXe\big) \arrow{r}{\ff^*_\Phi} \arrow{d}{r} 
& R^1\Gamma\big(j_C^*R\Phi\QCe\big) \arrow{d}{r_C}\\
R^1\Gamma\big(j_p^*R\Phi\QXe\big) \arrow{r}{\ff^*_{\Phi,p_c}} 
& R^1\Gamma\big(j_{p_c}^*R\Phi\QCe\big).
\end{tikzcd}
\end{equation}
It follows from the cohomological interpretation of tropical weight vectors in \cite[Lemma 5.8]{Yu_Balancing_2013} and \cite[Corollary 3.5]{Berkovich_Vanishing_II_1996} that the composition $s\circ\ff^*_{\Phi,p_c}\circ r$ is equal to the map $w_{(v_0,e)}^*$.
We conclude our lemma by the commutativity of the diagram in \eqref{eq:cd_vanishing2}.
\end{proof}

Now let
\[
\sigma_{v_0} = \sum_{e \ni v_0} w_{(v_0,e)} \in \Ker\big(\Z^J\xrightarrow{\Sigma}\Z\big),
\]
summing over all edges $e$ of $\Gamma_0$ connected to $v_0$.
Using the isomorphism in \eqref{eq:vanishing_cycles_at_DIv}, $\sigma_{v_0}$ induces a linear map by duality
\[\sigma_{v_0}^*\colon R^1\Gamma\big(j^*R\Phi\QXe\big) \rightarrow \Q_\ell(-1).\]

\begin{lem}\label{lem:weight_vectors_by_models}
The map $\sigma_{v_0}^*\colon R^1\Gamma\big(j^*R\Phi\QXe\big)\rightarrow\Q_\ell(-1)$ is equal to the composition $\alpha^*_C\circ\ff^*_\Phi$ in \eqref{eq:cd_vanishing1}.
\end{lem}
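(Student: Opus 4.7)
The plan is to reduce the identity $\sigma_{v_0}^* = \alpha^*_C \circ \ff^*_\Phi$ to a purely curve-side identity, and then verify that identity by applying \cref{thm:vanishing_cycles} to the formal curve $\fC$ at the stratum $\fC^{v_0}_s$, together with \cref{lem:composition} applied edge by edge.

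First, I would exploit the $\Z$-linearity of the assignment $\xi \mapsto \xi^*$ (coming from the isomorphism in \cref{thm:vanishing_cycles} and the natural pairing). The defining relation $\sigma_{v_0} = \sum_{e \ni v_0} w_{(v_0, e)}$ then gives $\sigma_{v_0}^* = \sum_{e \ni v_0} w_{(v_0, e)}^*$. Next, I would apply \cref{lem:composition} to each flag $(v_0, e)$: with $p_c = p_c(e)$ the node of $\fC_s$ corresponding to $e$, each summand reads $w_{(v_0, e)}^* = s_e \circ r_{C, p_c(e)} \circ \ff^*_\Phi$, where $s_e$ and $r_{C, p_c(e)}$ are the associated projection and restriction maps. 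Factoring out $\ff^*_\Phi$ yields
\[
\sigma_{v_0}^* = \Bigl( \sum_{e \ni v_0} s_e \circ r_{C, p_c(e)} \Bigr) \circ \ff^*_\Phi.
\]

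The problem thus reduces to the purely curve-side identity
\[
\alpha_C^* = \sum_{e \ni v_0} s_e \circ r_{C, p_c(e)}
\]
as maps $R^1\Gamma\bigl(j_C^* R\Phi \QCe\bigr) \to H^2_\et\bigl(j_C^* \QCs\bigr) \simeq \Q_\ell(-1)$. To prove this, I would apply \cref{thm:vanishing_cycles} to the strictly semi-stable formal curve $\fC$ at the one-dimensional stratum $\fC^{v_0}_s$. This identifies the source with a cokernel of the form $\Coker(\Q_\ell \xrightarrow{\Delta} \Q_\ell^{J})(-1)$ indexed by the edges of $\Gamma_0$ incident to $v_0$, and describes $\alpha^*_C$ as induced by the cycle class map sending each generator to the class $[\bar p_c(e)] \in H^2_\et(\fC^{v_0}_{\bar s}, \Q_\ell) \simeq \Q_\ell(-1)$, hence to $1$ under the degree identification. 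Simultaneously, the functoriality of \cref{thm:vanishing_cycles} under the closed immersions $\bar p_c(e) \hookrightarrow \fC^{v_0}_{\bar s} \hookrightarrow \fCbs$ describes each composition $s_e \circ r_{C, p_c(e)}$ on the same cokernel. A direct comparison shows that both sides coincide with the sum-of-coordinates functional, yielding the desired identity.

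The main obstacle is the bookkeeping in the last step: one has to match the combinatorial description of $\alpha^*_C$ via cycle classes with the pointwise description provided by restriction to each node, being careful about the sign conventions built into $s_e(x_0, x_1) = x_1 - x_0$, the ordering of the two branches at each $p_c(e)$ (which branch is labelled $0$), and the precise compatibility of the Tate-twisted isomorphisms under functoriality. Once these identifications are set up consistently, the two linear forms on the cokernel agree on each basis vector, and the identity of \cref{lem:weight_vectors_by_models} follows.
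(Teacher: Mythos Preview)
Your proposal is correct and follows essentially the same route as the paper: decompose $\sigma_{v_0}^*$ as $\sum_{e\ni v_0} w_{(v_0,e)}^*$ by linearity, apply \cref{lem:composition} to each summand, and identify $\alpha_C^*$ with $\sum_{e\ni v_0} s_e\circ r_{C,p_c(e)}$ via \cref{thm:vanishing_cycles} applied to $\fC$ at the stratum $\fC^{v_0}_s$. The paper states this last identification in one sentence (``$\alpha^*_C$ is the sum of the maps $s\circ r_C$ over all edges $e$ connected to $v_0$'') whereas you spell out the bookkeeping more carefully, but the argument is the same.
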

\begin{proof}
By Theorem \ref{thm:vanishing_cycles}, the map $\alpha^*_C$ in \eqref{eq:cd_vanishing1} is induced by the cycle class map. So $\alpha^*_C$ is the sum of the maps $s\circ r_C$ that we considered in Lemma \ref{lem:composition} over all edges $e$ connected to $v_0$.
On the other hand the map \[\sigma_{v_0}^*\colon R^1\Gamma\big(j^*R\Phi\QXe\big)\rightarrow\Q_\ell(-1)\] is the sum of the maps
\[w_{(v_0,e)}^*\colon R^1\Gamma\big(j^*R\Phi\QXe\big)\rightarrow \Q_\ell(-1) \]
over all edges $e$ connected to $v_0$.
Therefore Lemma \ref{lem:weight_vectors_by_models} follows from Lemma \ref{lem:composition}.
\end{proof}

Combining Lemma \ref{lem:weight_vectors_by_models} and the commutativity of the diagram in \eqref{eq:cd_vanishing1}, we obtain the following lemma.

\begin{lem}\label{lem:weight_vectors_by_models_2}
The map $\sigma_{v_0}^*\colon R^1\Gamma\big(j^*R\Phi\QXe\big)\rightarrow\Q_\ell(-1)$ is equal to the composition of the following morphisms
\[ R^1\Gamma\big(j^*R\Phi\QXe\big)\xrightarrow{\alpha^*} H^2\et\big(j^*\QXbs\big)\xrightarrow{\ff_s^*}H^2\et\big(\fC^{v_0}_{\bar s},\Q_\ell\big)\xrightarrow{\sim} \Q_\ell(-1).\]
\end{lem}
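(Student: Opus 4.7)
The plan is to combine Lemma \ref{lem:weight_vectors_by_models} with the commutativity of the right square in the diagram \eqref{eq:cd_vanishing1}. This is essentially a diagram chase, so the argument should be short.

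First I would invoke Lemma \ref{lem:weight_vectors_by_models}, which identifies the dual map $\sigma_{v_0}^*$ with the composition $\alpha_C^* \circ \ff_\Phi^*$, where $\alpha_C^*$ is the boundary map in the long exact sequence associated to the exact triangle \eqref{eq:exact_triangle} pulled back to $\fC_{\bar s}^{v_0}$ and $\ff_\Phi^*$ is the pullback on vanishing cycles constructed in the proof of that diagram.

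Next I would use the commutativity of the right-hand square of \eqref{eq:cd_vanishing1}, namely
\[
\ff_s^* \circ \alpha^* \;=\; \alpha_C^* \circ \ff_\Phi^*,
\]
which was already established as part of the functoriality of vanishing cycles. Combining the two identities gives
\[
\sigma_{v_0}^* \;=\; \alpha_C^* \circ \ff_\Phi^* \;=\; \ff_s^* \circ \alpha^*.
\]

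Finally I would note that the target $H^2\et\bigl(j_C^* \QCs\bigr)$ is canonically identified with $H^2\et\bigl(\fC^{v_0}_{\bar s},\Q_\ell\bigr) \simeq \Q_\ell(-1)$ via the trace isomorphism for the proper smooth curve $\fC_s^{v_0}$, as recorded before the statement of the lemma. Under this identification, the composition in the conclusion is precisely $\ff_s^* \circ \alpha^*$ (followed by the trace), yielding the desired equality. There is no genuine obstacle here; the content is entirely absorbed into the already-established Lemma \ref{lem:weight_vectors_by_models} and the functorial commutativity diagram \eqref{eq:cd_vanishing1}.
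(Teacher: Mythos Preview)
Your argument is correct and matches the paper's own proof exactly: the paper simply states that the lemma follows by combining Lemma~\ref{lem:weight_vectors_by_models} with the commutativity of the diagram~\eqref{eq:cd_vanishing1}. Your write-up spells out precisely this diagram chase, including the identification $H^2\et\big(j_C^*\QCs\big)\simeq\Q_\ell(-1)$ already recorded in the text.
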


\cref{lem:weight_vectors_by_models_2} can be reformulated as follows using \cref{thm:vanishing_cycles} and duality.

\begin{lem}\label{lem:tropical_weight_vector_as_intersection_number}
For every $i\in J=J_{I_v}$, the $i^\text{th}$ component $\sigma_{v_0}^i$ of $\sigma_{v_0}\in\Ker\big(\Z^J\xrightarrow{\Sigma}\Z\big)$ is equal to the degree of the line bundle $\big({\ff_s}|_{\fC_s^{v_0}}\big)^*\mathcal O(D_i)$ on the curve $\fC_s^{v_0}$.\qed
\end{lem}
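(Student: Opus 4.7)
The plan is to dualize \cref{lem:weight_vectors_by_models_2} componentwise using the explicit description of $R^1\Gamma\big(j^*R\Phi\QXe\big)$ furnished by \cref{thm:vanishing_cycles}. I would first fix the isomorphism
\[R^1\Gamma\big(j^*R\Phi\QXe\big)\simeq \Coker\big(\Q_\ell\xrightarrow{\Delta}\Q_\ell^J\big)(-1),\]
and for each $i\in J$ denote by $[e_i]$ the class of the $i\th$ standard basis vector. Since $\sigma_{v_0}$ lies in $\Ker\big(\Z^J\xrightarrow{\Sigma}\Z\big)$, the linear form $(x_j)\mapsto\sum_{j\in J}\sigma_{v_0}^j x_j$ descends to the cokernel, and this descended form is, by construction, the dual map $\sigma_{v_0}^*$. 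In particular $\sigma_{v_0}^*([e_i])=\sigma_{v_0}^i$ once $\Q_\ell(-1)$ is fixed once and for all.

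Next I would compute the image of $[e_i]$ under the composition appearing on the right-hand side of \cref{lem:weight_vectors_by_models_2}. By the second assertion of \cref{thm:vanishing_cycles}, the boundary map $\alpha^*$ sends $[e_i]$ to the étale cycle class of the closed stratum $\oD_{I_v\cup\{i\}}\subset\oD_{I_v}$, which coincides with the first Chern class $c_1\big(\mathcal O(D_i)|_{\oD_{I_v}}\big)\in H\et^2\big(\oD_{I_v},\Q_\ell(1)\big)$. Functoriality of the cycle class map under pullback then gives
\[\ff_{\bar s}^*\alpha^*([e_i])=c_1\big((\ff_s|_{\fC_s^{v_0}})^*\mathcal O(D_i)\big)\in H\et^2\big(\fC_{\bar s}^{v_0},\Q_\ell(1)\big),\]
and evaluating through the trace isomorphism $H\et^2\big(\fC_{\bar s}^{v_0},\Q_\ell\big)\simeq\Q_\ell(-1)$, which is nothing but the degree map for line bundles on the smooth proper curve $\fC_s^{v_0}$, outputs exactly the integer $\deg\big((\ff_s|_{\fC_s^{v_0}})^*\mathcal O(D_i)\big)$.

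Equating the two values computed in the previous two paragraphs through \cref{lem:weight_vectors_by_models_2} yields the asserted identity
\[\sigma_{v_0}^i=\deg\big((\ff_s|_{\fC_s^{v_0}})^*\mathcal O(D_i)\big).\]
I expect the only real difficulty to be the bookkeeping of Tate twists: one must verify that the identifications $R^1\Gamma\big(j^*R\Phi\QXe\big)\simeq\Coker(\cdots)(-1)$, $H\et^2\big(\fC_{\bar s}^{v_0},\Q_\ell\big)\simeq\Q_\ell(-1)$, and the pairing realizing $\sigma_{v_0}^*$ are mutually consistent so that the equality is a genuine equality of integers. Beyond this bookkeeping, the argument is a purely formal linear-algebraic dualization of the already established \cref{lem:weight_vectors_by_models_2}.
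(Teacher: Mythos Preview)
Your proposal is correct and is precisely the argument the paper has in mind: the paper presents \cref{lem:tropical_weight_vector_as_intersection_number} as an immediate reformulation of \cref{lem:weight_vectors_by_models_2} ``using \cref{thm:vanishing_cycles} and duality'' (hence the \qed\ in the statement), and your write-up simply spells out that dualization. The only thing the paper leaves implicit that you make explicit is the identification of $\alpha^*([e_i])$ with $c_1\big(\mathcal O(D_i)|_{\oD_{I_v}}\big)$ and of the trace isomorphism with the degree map, which is exactly the intended content of the phrase ``induced by the cycle class map'' in \cref{thm:vanishing_cycles}.
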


Now let $Z\in Z_1^+(D_{I_v})$ be the pushforward $\ff_{s*}([\fC_s^{v_0}])$ of the fundamental class of $\fC^{v_0}_s$.
\cref{lem:tropical_weight_vector_as_intersection_number} implies that for every $i\in\IX$, the $i^\text{th}$ component $\sigma_{v_0}^i$ of $\sigma_{v_0}$ is equal to the intersection number $Z\cdot\cO(D_i)|_{D_{I_v}}$.
Therefore, $\sigma_{v_0}$ lies in the image of the map $\alpha$ in \eqref{eq:map_alpha}.
Since $\sigma_v$ equals the sum of $\sigma_{v_0}$ over all vertices $v_0$ of $\Gamma_0$ that maps to $v$, we conclude that $\sigma_v$ lies in the image of the map $\alpha$, completing the proof of \cref{thm:balancing_conditions}.

\section{Kähler structures}\label{sec:Kahler}

As we will study tropical curves and tropicalization of curves in families, we need an extra structure to ensure that the moduli spaces we encounter will be of finite type.
In this section, we introduce the notion of simple density and explain its relation with non-archimedean Kähler structures.

We use the settings of \cref{sec:basic_settings_(tropicalization_moduli)}.

\begin{defin}\label{def:simple_Kahler_str}
A \emph{simple density} $\omega$ on the Clemens polytope $\SX$ is a collection of numbers $\omega_{I,j}\in (0,+\infty]$ for every face $\Delta^I, I\subset\IX$ and every vertex $j\in J_I$, such that $\omega_{I,j}\ge\omega_{I',j}$ whenever $I\supset I'$ and $j\in J_I$.
\end{defin}

\begin{defin}\label{def:tropical_degree}
Let $(\Gamma,h)$ be a parametrized tropical curve in the Clemens polytope $\SX$ equipped with a simple density $\omega$.
Let $v$ be a vertex of $\Gamma$ such that $h(v)$ sits in the relative interior of a face $\Delta^{I_v}$ for some $I_v\subset \IX$.
The \emph{local degree} of $(\Gamma,h)$ at $v$ with respect to $\omega$ is by definition the real number
\[\abs{\sigma_v}_\omega \coloneqq\max_{j\in J_{I_v}} \omega_{I_v,j}\cdot \abs{\sigma_v^j},\]
where $\sigma_v$ denotes the sum of weight vectors around $v$.
The \emph{tropical degree} of the parametrized tropical curve $(\Gamma,h)$ with respect to the simple density $\omega$ is the sum of the local degrees over all vertices of $\Gamma$.
The marked points on a parametrized tropical curve do not contribute to the degree.
\end{defin}

\begin{rem}
The definition of tropical degree with respect to a simple density $\omega$ carries over to combinatorial types in $\SX$.
The tropical degree of a parametrized tropical curve in $\SX$ with respect to $\omega$ coincides with the tropical degree of its associated combinatorial type with respect to $\omega$.
\end{rem}

The notion of simple density is supposed to be an approximation to the non-archimedean Kähler structure introduced in \cite{Yu_Gromov_2014}\footnote{It is based on the work of Kontsevich and Tschinkel \cite{Kontsevich_Non-archimedean_2002}, see also \cite{Boucksom_Singular_2011}.}.
We recall that a Kähler structure $\hL$ on the \kanal space $X$ with respect to the formal model $\fX$ is a virtual line bundle $L$ on $X$ with respect to $\fX$ equipped with a strictly convex metrization $\hL$.

A Kähler structure $\hL$ on $X$ with respect to $\fX$ induces a simple density $\omega$ on the Clemens polytope $\SX$ in the following way.

For every $i\in\IX$, let $N^1(D_i)$ denote the numerical classes of divisors in $D_i$.
Put $N^1(D_i)_\R=N^1(D_i)\otimes\R$.
The curvature of the Kähler structure $\hL$ is a collection of ample classes $\partial_i \varphi_i \in N^1(D_i)_\R$ satisfying the following compatibility condition: for any $I\subset\IX$, any two vertices $i,j\in I$, we have
\begin{equation}\label{eq:compatibility_of_curvature}
(\partial_i \varphi_i)|_{D_I} = (\partial_j \varphi_j)|_{D_I}.
\end{equation}

Let $\Delta^I$ be a face of $\SX$ for some $I\subset \IX$ and let $i\in I$ be a vertex.
Let $\partial_I\varphi_i$ be the restriction of $\partial_i\varphi_i$ to the stratum $D_I$.
It is an ample class in $N^1(D_I)_\R$.
Let $j$ be an element in $J_I$, and let
\[\omega_{I,j} = \min\Set{C\cdot(\partial_I\varphi_i) | C\in\overline\NE(D_I), |C\cdot \mathcal O(D_j)|_{D_I}|=1},\]
where $\overline\NE(D_I)$ denotes the closure of the cone of effective proper curves in the stratum $D_I$.
Equation \eqref{eq:compatibility_of_curvature} shows that $\omega_{I,j}$ does not depend on the choice of $i\in I$.

The ampleness of the class $\partial_I \varphi_i$ in $N^1(D_I)_\R$ implies that $\omega_{I,j}$ is a positive real number. 
The fact that $\omega_{I,j}\ge\omega_{I',j}$ for any $I\supset I'$ follows from the inclusion $\overline\NE\big(D_I\big)\subset\overline\NE\big(D_{I'}\big)$.
So the collection
\[\omega=\Set{\omega_{I,j} | I\subset\IX, j\in J_I}\]
is a simple density on the Clemens polytope $\SX$ (Definition \ref{def:simple_Kahler_str}).

\begin{defin}\label{def:induced_simple_density}
The simple density $\omega$ on $\SX$ constructed above is called the simple density induced by the Kähler structure $\widehat L$ on $X$.
\end{defin}

Now let $C$ be a connected proper smooth \kanal curve and $f\colon C\to X$ a morphism.
The \emph{degree} of the morphism $f$ with respect to the Kähler structure $\hL$ is by definition the degree of the virtual line bundle $f^*L$ on the curve $C$.

Let $(\Gamma,h)$ denote the associated parametrized tropical curve in $\SX$.
We can relate the degree of the morphism $f$ to the tropical degree (Definition \ref{def:tropical_degree}) of $(\Gamma,(\gamma_i),h)$ as follows.

\begin{prop}\label{prop:analytic-tropical_degree}
The degree of the morphism $f\colon C\rightarrow X$ with respect to the Kähler structure $\widehat L$ is greater than or equal to the tropical degree of the associated parametrized tropical curve $(\Gamma,h)$ with respect to the simple density $\omega$ on the Clemens polytope $\SX$ induced by $\widehat L$.
\end{prop}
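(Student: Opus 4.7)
The plan is to work with the intermediate tropical curve $(\Gamma_0, h_0)$ produced in Steps 1--2 of \cref{sec:parametrized_tropical_curves} (before the contraction and simplification of Step 3), prove the desired inequality at this level, and then verify that neither operation of Step 3 increases the tropical degree. The key ingredient is \cref{lem:tropical_weight_vector_as_intersection_number}, which expresses each component of $\sigma_{v_0}$ as an intersection number on $D_{I_{v_0}}$.

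For the first part, by flatness of $\fC$ over $k^\circ$ and the decomposition $\fC_s = \bigcup_{v_0 \in V(\Gamma_0)} \fC_s^{v_0}$ one has $\deg_{\hL}(f) = \sum_{v_0} f_{s*}[\fC_s^{v_0}] \cdot \partial_{I_{v_0}} \varphi$, where $\partial_{I_{v_0}} \varphi$ denotes the common restriction $(\partial_i \varphi_i)|_{D_{I_{v_0}}}$ for any $i \in I_{v_0}$ (well defined by \eqref{eq:compatibility_of_curvature}). For each $v_0 \in V(\Gamma_0)$ and each $j \in J_{I_{v_0}}$, every weight vector at a flag of $v_0$ points from the interior of $\Delta^{I_{v_0}}$ into a face $\Delta^I$ with $I \supset I_{v_0}$, so its $j$-th coordinate is non-negative; hence $\sigma_{v_0}^j \ge 0$, and by \cref{lem:tropical_weight_vector_as_intersection_number}, $\sigma_{v_0}^j = f_{s*}[\fC_s^{v_0}] \cdot \mathcal{O}(D_j)|_{D_{I_{v_0}}}$. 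Applying the definition of $\omega_{I_{v_0}, j}$ (after scaling any effective cycle so that its intersection with $\mathcal{O}(D_j)|_{D_{I_{v_0}}}$ becomes $1$), one gets $\omega_{I_{v_0}, j} \sigma_{v_0}^j \le f_{s*}[\fC_s^{v_0}] \cdot \partial_{I_{v_0}} \varphi$. Taking the maximum over $j$ and summing over $v_0$ bounds the tropical degree of $(\Gamma_0, h_0)$ above by $\deg_{\hL}(f)$.

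For the second part, simplification only removes type B vertices with $\deg = 2$ and $g = n = 0$, where $\sigma_{v_0} = 0$ and the local contribution is already zero. Contraction collapses a connected subgraph $\Gamma' \subset \Gamma_0$ mapped to a single point of $\SX$ into a new vertex $v'$; all vertices of $\Gamma'$ share the same face $I_{v_0} = I_{v'}$, internal edges cancel in the sum $\sigma_{v'} = \sum_{v_0 \in \Gamma'} \sigma_{v_0}$, and by non-negativity of $\sigma_{v_0}^j$ for $j \in J_{I_{v'}}$ together with the elementary inequality $\max_j \sum_{v_0} a_{v_0, j} \le \sum_{v_0} \max_j a_{v_0, j}$ for non-negative $a$, one obtains $|\sigma_{v'}|_\omega \le \sum_{v_0 \in \Gamma'} |\sigma_{v_0}|_\omega$. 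Chaining these bounds yields the proposition. The main obstacle is the careful bookkeeping of signs and supports of weight vectors at vertices whose face $\Delta^{I_{v_0}}$ strictly contains that of nearby vertices of $(\Gamma, h)$, which is what legitimises the $\max$/$\sum$ interchange.
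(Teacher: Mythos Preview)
Your overall strategy coincides with the paper's: pass to the intermediate curve $(\Gamma_0,h_0)$, use \cref{lem:tropical_weight_vector_as_intersection_number} together with the definition of $\omega_{I,j}$ to bound each local degree $\abs{\sigma_{v_0}}_\omega$ by the degree of $(\ff_s|_{\fC_s^{v_0}})^*(\partial_i\varphi_i)$, sum, and then observe that Step~3 can only lower the tropical degree. The paper states this last observation without details; you supply them.

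There is, however, a genuine error in your justification. The claim that ``every weight vector at a flag of $v_0$ points from the interior of $\Delta^{I_{v_0}}$ into a face $\Delta^I$ with $I\supset I_{v_0}$, so its $j$-th coordinate is non-negative'' is false for $j\in I_{v_0}\subset J_{I_{v_0}}$. An edge staying inside $\Delta^{I_{v_0}}$ and moving away from the vertex $j$ has strictly negative $j$-th component (the coordinates on a simplex sum to a constant, so some must decrease). Concretely, $\sigma_{v_0}^j = \deg\big((\ff_s|_{\fC_s^{v_0}})^*\cO(D_j)\big)$, and for $j\in I_{v_0}$ the line bundle $\cO(D_j)|_{D_{I_{v_0}}}$ is typically not nef, so this degree can be negative. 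This is precisely why \cref{def:tropical_degree} and the definition of $\omega_{I,j}$ use the absolute value $|C\cdot\cO(D_j)|$.

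The fix is immediate and your argument survives it: replace $\sigma_{v_0}^j$ by $|\sigma_{v_0}^j|$ throughout. In the first part, scaling $f_{s*}[\fC_s^{v_0}]$ by $1/|\sigma_{v_0}^j|$ (when nonzero) yields an effective class with $|C\cdot\cO(D_j)|=1$, whence $\omega_{I_{v_0},j}\,|\sigma_{v_0}^j|\le f_{s*}[\fC_s^{v_0}]\cdot\partial_{I_{v_0}}\varphi$, as in the paper. In the contraction step, the triangle inequality $|\sum_{v_0}\sigma_{v_0}^j|\le\sum_{v_0}|\sigma_{v_0}^j|$ replaces your appeal to non-negativity, and then $\max_j\sum_{v_0}\omega_{I_{v'},j}|\sigma_{v_0}^j|\le\sum_{v_0}\max_j\omega_{I_{v'},j}|\sigma_{v_0}^j|$ gives $|\sigma_{v'}|_\omega\le\sum_{v_0\in\Gamma'}|\sigma_{v_0}|_\omega$ as you wanted.
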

\begin{proof}
Let $\fC$, $\ff\colon\fC\to\fX$, $(\Gamma_0,h_0)$ be as in \cref{sec:parametrized_tropical_curves}.

Let $v_0$ be a vertex of $\Gamma_0$ and let $\fC_s^{v_0}$ denote the corresponding irreducible component of $\fC_s$.
Assume that $h_0(v_0)$ sits in the relative interior of a face $\Delta^{I_v}$ for some subset $I_v\subset\IX$.
Let $\sigma_{v_0}$ be the sum of weight vectors around $v_0$.

Let $i\in I_v$.
Let us call the degree of the line bundle $\big({\ff_s}|_{\fC_s^{v_0}}\big)^*(\partial_i\varphi_i)$ on the curve $\fC_s^{v_0}$ the \emph{local degree} of $\ff\colon \fC\rightarrow\fX$ at the irreducible component $\fC_s^{v_0}$. By the construction of the simple density $\omega$ and \cref{lem:tropical_weight_vector_as_intersection_number}, the local degree of $\ff\colon \fC\rightarrow\fX$ at the irreducible component $\fC_s^{v_0}$ is at least $\omega_{I_v,j}\cdot\abs{\sigma_{v_0}^j}$ for any $j\in J_{I_v}$.
Taking maximum over $j\in J_{I_v}$, we deduce that the local degree of $\ff\colon \fC\rightarrow\fX$ at the irreducible component $\fC_s^{v_0}$ is at least $\abs{\sigma_{v_0}}_\omega$ (see \cref{def:tropical_degree}).

Moreover, we observe that the tropical degree of $(\Gamma_0,h_0)$ is greater than or equal to the tropical degree of $(\Gamma,h)$.
Now the proposition follows from the fact that the degree of a virtual line bundle on a $k$-analytic curve equals the degree of its curvature (\cite[Proposition 5.7]{Yu_Gromov_2014}).
\end{proof}

\begin{rem}
We do not need the following fact in this paper, but it is worth pointing out.
We use the setting in the proof of \cref{prop:analytic-tropical_degree} and the terminology from \cite{Yu_Gromov_2014}.
The strictly convex metrization $\widehat L$ determines a germ of a strictly convex simple function $\varphi_i$ at the vertex $i$ up to addition by linear functions.
By the definition of the derivative $\partial_i\varphi_i$ and by \cref{lem:tropical_weight_vector_as_intersection_number}, we have
\[\deg\Big(\big(\ff_s|_{\fC_s^{v_0}}\big)^*(\partial_i\varphi_i)\Big) = \sum_{j\in J_{I_v}} \varphi_i(j)\cdot\deg\Big(\big(\ff_s|_{\fC_s^{v_0}}\big)^*\mathcal O(D_j)\Big) = \sum_{j\in J_{I_v}}\varphi_i(j)\cdot\sigma_{v_0}^j.\]
As a result, the degree of the morphism $f\colon C\to X$ with respect to the Kähler structure $\hL$ can be read out from the corresponding tropical curve.
\end{rem}

\section{The moduli space of tropical curves}\label{sec:space_of_tropical_curves}

In this section, we study the structure of the space of tropical curves with bounded degree.
The goal is to prove the following theorem.

\begin{thm}\label{thm:tropical_compactness}
Let $\SX$ be a Clemens polytope equipped with a simple density $\omega$.
Fix two non-negative integers $g,n$ and a positive real number $A$.
Denote by $M_{g,n}(\SX,A)$ the set of simple $n$-pointed genus $g$ parametrized tropical curves in $\SX$ whose tropical degree with respect to $\omega$ is bounded by $A$.
Then $M_{g,n}(\SX,A)$ is naturally a compact topological space with a stratification whose open strata are open convex polyhedrons.
The strata are in one-to-one correspondence with good simple $n$-pointed genus $g$ combinatorial types in $\SX$ whose tropical degree is bounded by $A$.
\end{thm}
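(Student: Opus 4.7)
The plan is to stratify $M_{g,n}(\SX,A)$ by good simple combinatorial types, identify each stratum with the relative interior of a convex polyhedron, and glue these strata along their boundaries via the degeneration relation of \cref{def:degeneration_combinatorial_type} to obtain a compact space.

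First, I would fix a good simple combinatorial type $\Theta$ of genus $g$ with $n$ marked points and tropical degree bounded by $A$, and describe the associated set $M_\Theta$ of tropical curves realizing $\Theta$. Such a curve is determined by the vertex positions $h(v)$, which must lie in the relative interior of the prescribed face $\Delta^{I_v}$, subject to the conditions that for every edge $e\in E(u,v)$ the difference $h(v)-h(u)$ is a positive scalar multiple of the prescribed weight vector $w_{(u,e)}$ and that each edge segment remains inside a single face of $\SX$. These are linear (in)equalities in the tuple $(h(v))_{v\in V(\Gamma)}$, so $M_\Theta$ is naturally the relative interior of a bounded convex polyhedron $\overline{M_\Theta}$ inside $\prod_{v\in V(\Gamma)} \R^{\IX}$, the boundedness coming from the compactness of $\SX$.

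Second, I would show that only finitely many good simple combinatorial types arise in $M_{g,n}(\SX,A)$. Since the simple density $\omega$ takes only finitely many values, there is a uniform lower bound $\omega_{\min}>0$, and the degree bound $\sum_v \abs{\sigma_v}_\omega \le A$ limits each $\sigma_v$ to a finite set of integer vectors; in particular the set of type-A vertices is bounded. Type-B vertices satisfy $\sigma_v=0$, and the simplicity condition forces each such vertex to satisfy $\deg(v)\ne 2$, $g(v)\ge 1$, or $n(v)\ge 1$; the latter two kinds are bounded by the genus $g$ and the marked point count $n$, and elementary graph combinatorics then bound the remaining type-B vertices once the number of ``special'' vertices is fixed. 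The individual weight vectors $w_{(v,e)}$ on all edges are finally controlled by propagating $\sigma_v$ through the graph as in the accounting of \cite{Yu_Number_2013}. This yields finiteness.

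Third, I would glue the finitely many closed polyhedra $\overline{M_\Theta}$ along their faces. A boundary point of $\overline{M_\Theta}$ corresponds to a configuration in which certain edge lengths have degenerated to zero and/or certain vertices have slid onto a lower-dimensional face of $\SX$. Contracting the collapsed edges and passing to the simplification (\cref{def:simplification}) yields a new combinatorial type $\Theta'$ which is a degeneration of $\Theta$ in the sense of \cref{def:degeneration_combinatorial_type}, and the boundary point is thereby identified with a point of $M_{\Theta'}\subset\overline{M_{\Theta'}}$. The quotient of $\bigsqcup_\Theta \overline{M_\Theta}$ by these identifications is $M_{g,n}(\SX,A)$, and compactness follows because a finite union of compact polyhedra remains compact under a finite system of identifications.

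The hard part will be step two, where bounding the combinatorial types requires carefully controlling the potential accumulation of type-B vertices under the simplicity constraint and then propagating the bound on each $\sigma_v$ to a bound on every $w_{(v,e)}$. The other delicate point is verifying that the gluing in step three is Hausdorff, which amounts to checking that the simplification operation behaves continuously across strata so that two boundary limits of different $\overline{M_\Theta}$'s representing the same simple parametrized tropical curve are genuinely identified; this is essentially combinatorial once the degeneration relation is set up correctly.
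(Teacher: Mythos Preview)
Your proposal is correct and follows essentially the same route as the paper: the paper proves finiteness of combinatorial types via the sequence of Lemmas \ref{lem:bound_type_A}--\ref{lem:bound_weight_vectors} (your Step~2, including the path/capacity argument from \cite{Yu_Number_2013} for bounding individual weight vectors), describes each stratum $M_\alpha$ and its closure $\oM_\alpha$ as a convex polyhedron in \cref{prop:space_fixed_combinatorial_type} (your Steps~1 and~3), and then glues the finitely many $\oM_\alpha$ along degenerations. The only cosmetic difference is that the paper parametrizes $M_\alpha$ by the position of a root vertex together with edge lengths rather than by all vertex positions, and it does not dwell on the Hausdorffness issue you flag.
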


\begin{defin}\label{def:weight_vectors}
For any $w\in \Z^{\IX}$, we define its \emph{norm} $|w|\coloneqq\sqrt{\sum(w^i)^2}$.
\end{defin}

\begin{lem}\label{lem:bound_type_A}
There exists an integer $N$, such that for any $n$-pointed parametrized tropical curve $(\Gamma,(\gamma_i),h)\in M(\SX,A)$, the number of type A vertices of $\Gamma$ and the norms of the sums $\sigma_v$ of tropical weight vectors around all type A vertices are bounded by $N$.
\end{lem}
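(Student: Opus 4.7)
The plan is to extract a uniform positive lower bound on the local tropical degree at a type A vertex and then read off both claimed bounds from the global bound $A$.

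Since $\SX$ is a finite simplicial complex, there are only finitely many pairs $(I,j)$ with $I\subseteq\IX$ and $j\in J_I$. Set
\[\omega_0\coloneqq \min\Set{\omega_{I,j} | I\subseteq\IX,\ j\in J_I,\ \omega_{I,j}<\infty},\]
a strictly positive real number. Given $(\Gamma,(\gamma_i),h)\in M_{g,n}(\SX,A)$ and a vertex $v$, the bound $|\sigma_v|_\omega\leq A$ forces $\sigma_v^j=0$ at every $j\in J_{I_v}$ with $\omega_{I_v,j}=+\infty$; otherwise the corresponding term in the maximum would be infinite. For the remaining $j\in J_{I_v}$, the same bound yields the component estimate $|\sigma_v^j|\leq A/\omega_{I_v,j}\leq A/\omega_0$. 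Moreover, since every weight vector is supported on a face containing $\Delta^{I_v}$, we have $\sigma_v^j=0$ for $j\notin I_v\cup J_{I_v}$ automatically.

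For a type A vertex, integrality of the weight vectors gives $|\sigma_v^j|\geq 1$ wherever nonzero. The key point is that at a type A vertex some $j\in J_{I_v}$ with $\omega_{I_v,j}$ finite must satisfy $\sigma_v^j\neq 0$; granting this, the local degree satisfies $|\sigma_v|_\omega\geq \omega_0>0$. Summing over type A vertices, the global bound forces $|V_A|\leq A/\omega_0$. Combined with the component bounds this gives
\[|\sigma_v|^2=\sum_{j\in\IX}(\sigma_v^j)^2\leq |\IX|\,(A/\omega_0)^2,\]
hence $|\sigma_v|\leq \sqrt{|\IX|}\cdot A/\omega_0$, and setting $N\coloneqq\lceil\sqrt{|\IX|}\cdot A/\omega_0\rceil$ completes the argument.

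The main obstacle is the key point above. The support of $\sigma_v$ lies in $I_v\cup J_{I_v}$ with total coordinate sum zero, since each $w_{(v,e)}$ is tangent to its ambient face and faces of $\SX$ are simplices embedded in the standard way in $\R^{\IX}$. If all $J_{I_v}$-components of $\sigma_v$ vanished, $\sigma_v$ would live purely on the ``inward'' coordinates $I_v$ with zero sum, contributing nothing to the tropical degree. Ruling this out --- or separately bounding the number and the norms of such ``internal'' type A vertices --- would require an analysis of the edge structure of a simple $n$-pointed genus $g$ graph using the bounds already established on outward weights, in the spirit of \cite{Yu_Number_2013}, together with the topological constraints coming from the fixed genus $g$ and the fixed number $n$ of marked points to limit the accumulation of edges within a single face.
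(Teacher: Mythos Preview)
Your approach is exactly what the paper's one-line proof (``It follows from Definitions~\ref{def:type_of_vertices} and~\ref{def:tropical_degree}'') unpacks to, and the first two paragraphs are correct. The problem is your final paragraph, where you manufacture a gap that is not there: you treat $I_v$ and $J_{I_v}$ as if they were disjoint, worrying that $\sigma_v$ might be supported purely on the ``inward'' coordinates $I_v$ and thus contribute nothing to $\abs{\sigma_v}_\omega=\max_{j\in J_{I_v}}\omega_{I_v,j}\abs{\sigma_v^j}$. But by the paper's definition~\eqref{eq:J_(tropicalization_moduli)} one has $J_{I_v}=\{j\in\IX\mid D_{I_v\cup\{j\}}\neq\emptyset\}$, and for $j\in I_v$ we get $D_{I_v\cup\{j\}}=D_{I_v}\neq\emptyset$; hence $I_v\subset J_{I_v}$.

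With this observation your ``key point'' is immediate and the proof closes. As you already note, each edge at $v$ lies in a face $\Delta^{I'}\supset\Delta^{I_v}$, so $I'\subset J_{I_v}$ and every $w_{(v,e)}$, hence $\sigma_v$, is supported on $J_{I_v}$. For a type~A vertex $\sigma_v\neq 0$, so some $j\in J_{I_v}$ has $\abs{\sigma_v^j}\ge 1$; the bound $\abs{\sigma_v}_\omega\le A$ then forces $\omega_{I_v,j}<\infty$, whence $\abs{\sigma_v}_\omega\ge\omega_{I_v,j}\ge\omega_0$. The rest of your argument goes through verbatim, with no need for the graph-theoretic detour you sketch at the end. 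Simply delete the last paragraph and record $I_v\subset J_{I_v}$ where you verify the key point.
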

\begin{proof}
It follows from Definitions \ref{def:type_of_vertices} and \ref{def:tropical_degree}.
\end{proof}

\begin{lem}\label{lem:bound_vertices}
There exists an integer $N$, such that for any $n$-pointed parametrized tropical curve $(\Gamma,(\gamma_i),h)\in M_{g,n}(\SX,A)$, the number of vertices of $\Gamma$ is bounded by $N$.
\end{lem}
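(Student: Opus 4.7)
The plan is to split $V(\Gamma)$ according to the types introduced in \cref{def:type_of_vertices} and according to valence, then bound each piece separately using (a) \cref{lem:bound_type_A}, (b) the simplicity assumption, (c) the fixed total genus, and (d) Euler's formula for graphs.

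First, I observe that every leaf of $\Gamma$ must be of type A. Indeed, if $v$ is a vertex with $\deg(v)=1$ and $e$ its unique edge, then $\sigma_v=w_{(v,e)}$, which is nonzero because tropical weight vectors are required to lie in $\Z^{\IX}\setminus 0$ (\cref{def:parametrized_tropical_curve}(\ref{item:tropical_curve:weight})). By \cref{lem:bound_type_A}, the number of type A vertices is bounded by some constant $N$ depending only on $(\SX,\omega,A)$; in particular the number of leaves of $\Gamma$ is at most $N$.

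Next I partition the type B vertices by valence. A type B vertex of valence $2$ must, by the simplicity assumption (\cref{def:type_of_vertices}), satisfy $g(v)>0$ or $n(v)>0$. Since the total genus $b_1(\Gamma)+\sum_v g(v)$ equals $g$, at most $g$ vertices carry positive genus; since the marked points are $\gamma_1,\dots,\gamma_n$, at most $n$ vertices carry a marked point. Hence the number of type B vertices of valence $2$ is at most $g+n$. Combining with the type A bound, the total number of vertices of valence $\le 2$ is at most $N+g+n$ (from the type A side) plus $g+n$ (from the type B valence-$2$ side), i.e.\ bounded by $2N+2g+2n$.

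It remains to bound the number $V_{\ge 3}$ of vertices of valence at least $3$. Writing $|V(\Gamma)|=V_1+V_2+V_{\ge 3}$, Euler's formula gives $2|E(\Gamma)|=2|V(\Gamma)|-2+2b_1(\Gamma)$, while the handshake lemma gives $2|E(\Gamma)|\ge V_1+2V_2+3V_{\ge 3}$. Subtracting yields
\[
V_{\ge 3}\ \le\ V_1+2b_1(\Gamma)-2\ \le\ V_1+2g,
\]
since $b_1(\Gamma)\le g$. Because every leaf is type A, $V_1\le N$, and therefore $V_{\ge 3}\le N+2g$. Adding the bounds gives $|V(\Gamma)|\le 3N+3g+2n$, which is independent of $(\Gamma,(\gamma_i),h)$.

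I do not anticipate a real obstacle here: the only nontrivial ingredient is the observation that the nonvanishing of weight vectors, combined with simplicity, forces every valence-$\le 2$ type B vertex to be ``charged'' by either genus or a marked point, and the remaining combinatorial bookkeeping is standard Euler-characteristic accounting.
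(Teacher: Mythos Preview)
Your proof is correct and follows essentially the same strategy as the paper: bound the number of low-valence vertices using \cref{lem:bound_type_A} together with the simplicity hypothesis (which forces valence-$2$ type~B vertices to carry genus or a marked point), and then use the Euler characteristic plus the handshake inequality to control the remaining vertices. The paper compresses the low-valence bound into a single sentence and combines Euler's formula with $\#E(\Gamma)\ge\frac{3}{2}(\#V(\Gamma)-N_0)$ to get $\#V(\Gamma)\le 3N_0+2g-2$ directly, whereas you bound $V_{\ge 3}$ separately; the bookkeeping in your middle paragraph is a bit muddled (the phrase ``$N+g+n$ from the type~A side'' and the displayed constant $2N+2g+2n$ don't quite match), but since only the existence of \emph{some} uniform bound matters, this does not affect correctness.
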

\begin{proof}
Let $(\Gamma,(\gamma_i),h)\in M_{g,n}(\SX,A)$.
By computing the Euler characteristic of $\Gamma$, we have
\begin{equation}\label{eq:bound_vertices}
\# V(\Gamma) - \# E(\Gamma) = 1 - b_1(\Gamma) \ge 1-g.
\end{equation}
Since $(\Gamma,(\gamma_i),h)$ is simple, by \cref{lem:bound_type_A}, the number of vertices of $\Gamma$ of degree less than 3 can be bounded uniformly by an integer $N_0$.
Therefore, we have
\[ \# E(\Gamma) \ge \frac{3}{2} (\# V(\Gamma) - N_0).\]
Combining with \eqref{eq:bound_vertices}, we deduce that
\[\# V(\Gamma)\le 3 N_0+2g-2.\]
\end{proof}

\begin{lem}\label{lem:bound_edges}
There exists an integer $N$, such that for any $n$-pointed parametrized tropical curve $(\Gamma,(\gamma_i),h)\in M_{g,n}(\SX,A)$, the number of edges of $\Gamma$ is bounded by $N$.
\end{lem}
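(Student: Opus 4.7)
The plan is to deduce the bound on edges directly from the already established bound on vertices (\cref{lem:bound_vertices}) together with the Euler characteristic identity and the genus constraint.

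First I would recall that for any parametrized tropical curve $(\Gamma,(\gamma_i),h) \in M_{g,n}(\SX,A)$, the Euler characteristic of $\Gamma$ yields
\[
\#E(\Gamma) = \#V(\Gamma) - 1 + b_1(\Gamma).
\]
Next, by the definition of the genus of a parametrized tropical curve (\cref{def:type_of_vertices}), we have
\[
g = b_1(\Gamma) + \sum_{v \in V(\Gamma)} g(v),
\]
and since each $g(v)$ is a non-negative integer, this immediately gives $b_1(\Gamma) \leq g$.

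Combining these two observations with \cref{lem:bound_vertices}, which provides an integer $N_0$ bounding $\#V(\Gamma)$, we conclude
\[
\#E(\Gamma) \leq N_0 - 1 + g,
\]
so we may take $N = N_0 - 1 + g$. This argument is entirely elementary once \cref{lem:bound_vertices} is in hand, so I do not anticipate any real obstacle; the substantive content was already absorbed into the bound on vertices via the simplicity hypothesis and \cref{lem:bound_type_A}.
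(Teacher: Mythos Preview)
Your proof is correct and matches the paper's own argument exactly: the paper simply says the lemma follows from \cref{lem:bound_vertices} together with the Euler-characteristic inequality $\#V(\Gamma)-\#E(\Gamma)=1-b_1(\Gamma)\ge 1-g$, which is precisely what you spelled out.
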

\begin{proof}
It follows from \cref{lem:bound_vertices} and the inequality \eqref{eq:bound_vertices}.
\end{proof}

\begin{defin}\label{def:path}
Let $(\Gamma,(\gamma_i),h)$ be an $n$-pointed parametrized tropical curve in $\SX$.
Let $(v_0,e_0)$ be a flag of $\Gamma$.
Let $i\in\IX$.
A \emph{path} $P$ starting from the flag $(v_0,e_0)$ with direction $i$ is a sequence of flags
$\big((v_0,e_0),(v_1,e_1),\dots,(v_{l_P},e_{l_P})\big)$ satisfying the following conditions:
\begin{enumerate}[(i)]
\item The vertices $v_j$, $v_{j+1}$ are the two endpoints of $e_j$ for $0\le j < l_P$;
\item We have $w^i_{(v_j,e_j)}>0$ for $0\le j\le l_P$;
\item Let $v'_{l_P}$ denote the endpoint of $e_{l_P}$ different from $v_{l_P}$, then $v'_{l_P}$ is a vertex of type A.
We say that the path $P$ ends at the vertex $v'_{l_P}$.
\end{enumerate}
\end{defin}

\begin{lem}\label{lem:bound_weight_vectors}
There exists an integer $N$, such that for any $n$-pointed parametrized tropical curve $(\Gamma,(\gamma_i),h)\in M_{g,n}(\SX,A)$, any flag $(v_0,e_0)$ of $\Gamma$, the norm of the weight vector $w_{(v_0,e_0)}$ is bounded by $N$.
\end{lem}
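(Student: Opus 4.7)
The plan is to fix a coordinate $i\in\IX$ and bound $|w^i_{(v_0,e_0)}|$ uniformly; since $\IX$ is a fixed finite index set attached to $\SX$, the norm $|w_{(v_0,e_0)}|$ will then be controlled by $\sqrt{|\IX|}\cdot\max_i|w^i_{(v_0,e_0)}|$. For this fixed $i$, I build an auxiliary directed graph from $\Gamma$ by discarding every edge with $w^i$ identically zero and orienting each remaining edge from the endpoint $u$ with $w^i_{(u,e)}>0$ to the endpoint $v$ with $w^i_{(v,e)}<0$, assigning to this oriented edge the flow value $f(e)\coloneqq|w^i_{(v,e)}|$. Summing over the flags at a vertex $v$, the net outflow minus net inflow at $v$ equals $\sigma_v^i$, which vanishes at every type B vertex. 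Hence $f$ is divergence-free away from the type A vertices, whose excesses are controlled by \cref{lem:bound_type_A}.

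The decisive observation is that this oriented graph is acyclic. By \cref{def:parametrized_tropical_curve}, the weight vector $w_{(u,e)}$ is a positive scalar multiple of $h(v)-h(u)$, where $v$ is the other endpoint of $e$; the hypothesis $w^i_{(u,e)}>0$ therefore forces $h(v)^i>h(u)^i$ strictly. Consequently $h^i$ increases strictly along any directed path in the auxiliary graph, and a directed cycle would yield the contradiction $h(v_0)^i>h(v_0)^i$.

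Given acyclicity and conservation away from type A vertices, the standard flow-decomposition argument writes $f$ as a non-negative sum of unit path flows running from sources (type A vertices with $\sigma_v^i>0$) to sinks (type A vertices with $\sigma_v^i<0$). The total outflow from all sources is $\sum_{v\text{ type A},\,\sigma_v^i>0}\sigma_v^i\le\sum_{v\text{ type A}}|\sigma_v|$, which is bounded by $N^2$ by \cref{lem:bound_type_A} (both the number of type A vertices and each $|\sigma_v|$ are bounded by $N$). Since the value of $f$ on any single oriented edge cannot exceed this total outflow, $|w^i_{(v_0,e_0)}|\le N^2$, and aggregating over $i\in\IX$ yields the claimed uniform bound on $|w_{(v_0,e_0)}|$. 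The only delicate point is the acyclicity step, which depends essentially on the fact that every edge embeds in $\SX$ as a \emph{non-degenerate} affine segment whose direction is parallel to the weight vector; everything else is routine graph-theoretic bookkeeping together with the prior bounds on type A contributions.
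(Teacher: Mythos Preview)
Your argument is correct and follows the same core strategy as the paper: fix a coordinate $i$, view the $i$th components $|w^i_{(\cdot,\cdot)}|$ as an integer flow on $\Gamma$ whose divergence at each vertex $v$ equals $\sigma_v^i$, and bound the flow through any edge by the total source strength, which \cref{lem:bound_type_A} controls. The paper carries this out by an explicit greedy path-extraction with decreasing capacities $c^i(e)$, verifying via the balance identity \eqref{eq:capacity} that a path can only terminate at a type~A vertex, and that at most $|\sigma_v^i|$ paths terminate at any given $v$. Your version is somewhat more streamlined: you note that the $i$-oriented graph is acyclic because $h^i$ increases strictly along every directed edge (using that each edge of $\Gamma$ embeds as a non-degenerate segment in $\SX$ with direction parallel to the nonzero weight vector), so the flow decomposes into source-to-sink unit paths with no cycle component, and the bound on any single edge follows at once. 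The paper's capacity bookkeeping does not actually require acyclicity (termination is forced by finiteness of the total capacity), but your height-function observation gives a cleaner conceptual reason why no flow is lost to cycles, and replaces the inductive verification of \eqref{eq:capacity} by a one-line geometric fact.
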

\begin{proof}
We fix $i\in \IX$.
Let $(\Gamma,(\gamma_i),h)\in M_{g,n}(\SX,A)$ and let $(v_0,e_0)$ be a flag of $\Gamma$.
Let $m\coloneqq w^i_{(v_0,e_0)}$ be the $i\th$ component of the weight vector $w_{(v_0,e_0)}$.
We assume that $m\ge 0$, otherwise we replace $v_0$ by the other endpoint of $e_0$.
We will show that $m$ can be bounded independently of $(\Gamma,(\gamma_i),h)$.

We use the ideas from \cite[\S3]{Yu_Number_2013}.
We claim that there exists a collection of $m$ paths $P_1,\dots,P_m$ starting from the flag $(v_0,e_0)$ with direction $i$ such that for any flag $(v,e)$ of $\Gamma$, the number of times that $(v,e)$ occurs in $P_1,\dots,P_m$ is bounded by $\abs{w^i_{(v,e)}}$.
Such a collection of $m$ paths can be constructed as follows.

We assign to each edge $e$ of $\Gamma$ an integer $c^i(e)$ called the \emph{capacity} (in the $i^\text{th}$ direction).
For each edge $e$ of $\Gamma$, we set initially $c^i(e)=\abs{w^i_{(v,e)}}$, where $v$ is an endpoint of $e$.
By definition, for every vertex $v$ of type $B$, we have
\begin{equation}\label{eq:capacity}
\sum_{e\ni v,\ w^i_{(v,e)}<0} c^i(e) = \sum_{e\ni v,\ w^i_{(v,e)}>0} c^i(e),
\end{equation}
where the left sum is over every edge $e$ connected to $v$ with $w^i_{(v,e)}<0$, and the right sum is over every edge $e$ connected to $v$ with $w^i_{(v,e)}>0$.

To construct the path $P_1$, we start with the flag $(v_0,e_0)$, and we decrease the capacity $c^i(e_0)$ by 1.
Suppose we have constructed a sequence of flags $(v_0,e_0),\dots,(v_j,e_j)$ satisfying \cref{def:path} Conditions (i)-(ii).
Let $v'_j$ denote the endpoint of the edge $e_j$ different from $v_j$.
We choose $(v_{j+1},e_{j+1})$ to be a flag of $\Gamma$ such that
\begin{enumerate}[(i)]
\item $v_{j+1}=v'_j$,
\item $w^i_{(v_{j+1},e_{j+1})}>0$,
\item $c^i(e_{j+1})>0$.
\end{enumerate}
If such a flag does not exist, we stop.
We note that in this case $v'_j$ is necessarily a vertex of type A because of \cref{eq:capacity}.
If such a flag exists, we let $(v_{j+1},e_{j+1})$ be the next flag in the path $P_1$ and we decrease the capacity $c^i(e_{j+1})$ by 1.
We iterate until we stop.
As a result we obtain the path $P_1$.
Then we construct the path $P_2$ by the same procedure as the construction of $P_1$ but with respect to the decreased capacities $c^i(e)$.
Since the path $P_1$ ends at a vertex of type $A$, we note that after the construction of $P_1$, the decreased capacities $c^i(e)$ still satisfy \cref{eq:capacity} for every vertex of type B except possibly for the vertex $v_0$.
Therefore, the path $P_2$ must also end at a vertex of type $A$.
We iterate this procedure and we obtain a collection of $m$ paths $P_1,\dots,P_m$.

By construction, for any vertex $v$ of $\Gamma$ of type A, the number of paths ending on $v$ is bounded by $\abs{\sigma_v^i}$.
In other words, except for at most $\abs{\sigma^i_v}$ paths, the other paths starting from $(v_0,e_0)$ with direction $i$ that reach the vertex $v$ must continue going.
Therefore, by \cref{lem:bound_type_A}, the number $m$ can be bounded independently of $(\Gamma,(\gamma_i),h)$.
\end{proof}

\begin{rem}
\cref{prop:analytic-tropical_degree} combined with Lemmas \ref{lem:bound_vertices}, \ref{lem:bound_edges} and \ref{lem:bound_weight_vectors} controls the complexity of the tropical curves obtained from analytic curves in $X$ with bounded degrees.
More precisely, given a positive real number $A$, there exists an integer $N$, such that for any connected proper smooth $k$-analytic curve $C$, any morphism $f:C\rightarrow X$ with degree bounded by $A$, the number of vertices, the number of edges and the norms of the weight vectors of the edges of the associated parametrized tropical curve in $\SX$ are all bounded by $N$.
We remark that this finiteness property can also be obtained as a consequence of Lemma \ref{lem:tropical_weight_vector_as_intersection_number} and the boundedness of the moduli stack of formal stable maps (\cite[Corollary 9.5]{Yu_Gromov_2014}).
\end{rem}

\begin{prop}\label{prop:space_fixed_combinatorial_type}
Let $\alpha\coloneqq(\Gamma,(\gamma_i))$ be a combinatorial type occurring in $M_{g,n}(\SX,A)$.
Let $M_\alpha$ denote the subset of $M_{g,n}(\SX,A)$ with combinatorial type $\alpha$.
Then $M_\alpha$ is naturally an open convex polyhedron in a real affine space, i.e.\ a subset 
of a real affine space given by the conjunction of finitely many linear strict inequalities.
Moreover, let $\oM_\alpha$ denote its closure in the real affine space.
We have a natural map
\[\iota_\alpha\colon\oM_\alpha \longrightarrow M_{g,n}(\SX,A).\]
The image of $\iota_\alpha$ coincides with the union
\[\bigcup_{\alpha'\preceq\alpha} M_{\alpha'}\]
in $M_{g,n}(\SX,A)$, where the union is over every combinatorial type $\alpha'$ which is the simplification of a degeneration of $\alpha$.
\end{prop}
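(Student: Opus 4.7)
First I would realize $M_\alpha$ as an open convex polyhedron. In the real affine space with coordinates $h(v)\in\R^{\IX}$ for $v\in V(\Gamma)$ and $\lambda_e\in\R$ for $e\in E(\Gamma)$, the subset $M_\alpha$ is cut out by
\begin{enumerate}[(a)]
\item the linear equations expressing that $h(v)$ lies in the affine span of $\Delta^{I_v}$ (namely $h(v)^i=0$ for $i\notin I_v$ together with the simplex equation in $\R^{I_v}$);
\item for every edge $e$ with endpoints $u,v$, the linear equation $h(v)-h(u)=\lambda_e\,w_{(u,e)}$;
\item the strict inequalities $h(v)^i>0$ for $i\in I_v$ and $\lambda_e>0$.
\end{enumerate}
This presents $M_\alpha$ as an open convex polyhedron inside the affine subspace defined by (a)--(b), and $\oM_\alpha$ as its closure, obtained by weakening the inequalities in (c) to the corresponding weak inequalities.

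To define $\iota_\alpha$ on a point $p\in\oM_\alpha$, let $\Gamma_0\subset\Gamma$ be the subgraph on the edges with $\lambda_e=0$ and let $\phi\colon V(\Gamma)\to V(\Gamma'')$ be the quotient that contracts each connected component of $\Gamma_0$ to a single vertex. Along any edge of $\Gamma_0$ equation (b) forces the endpoints to have equal positions, so $h''(v'')\coloneqq h(v)$ is well-defined for any $v\in\phi\inv(v'')$; moreover any edge $e$ with $\lambda_e>0$ must connect distinct fibres of $\phi$ (otherwise $\lambda_e\,w_{(u,e)}=0$ with $w_{(u,e)}\neq 0$), so $\Gamma''$ has no self-loops and inherits well-defined tropical weight vectors. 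Equipping $\Gamma''$ with $g(v'')\coloneqq b_1(\phi\inv(v''))+\sum_{v\in\phi\inv(v'')}g(v)$, with the unique $I_{v''}$ whose relative interior contains $h''(v'')$, and with the marked points transported through $\phi$, I set $\iota_\alpha(p)$ to be the simplification of the resulting parametrized tropical curve $(\Gamma'',h'')$. By construction the combinatorial type $\alpha''$ of $(\Gamma'',h'')$ is a degeneration of $\alpha$ in the sense of \cref{def:degeneration_combinatorial_type}: conditions (i), (iii), (iv) are built in, and (ii) follows from the fact that $h(v)\in\overline{\Delta^{I_v}}$ may land in a subface $\Delta^{I_{v''}}\subset\Delta^{I_v}$. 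Hence $\iota_\alpha(p)\in M_{\alpha'}$ for $\alpha'=\operatorname{simp}(\alpha'')$, which gives the inclusion $\Image\iota_\alpha\subset\bigcup_{\alpha'\preceq\alpha}M_{\alpha'}$; the degree bound $A$ is preserved because simplification only removes vertices with vanishing $\sigma$, while for contraction the cancellation of weight vectors on edges internal to $\phi\inv(v'')$ yields $\sigma_{v''}=\sum_{v\in\phi\inv(v'')}\sigma_v$, and the monotonicity $\omega_{I_{v''},j}\le\omega_{I_v,j}$ of \cref{def:simple_Kahler_str} then controls the new local degrees in terms of the old ones.

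For the reverse inclusion, given $\alpha'$ in the union and any tropical curve of type $\alpha'$, I would fix $\alpha''$ with $\operatorname{simp}(\alpha'')=\alpha'$ that is a degeneration of $\alpha$ along some $\phi\colon V(\Gamma)\to V(\Gamma'')$, un-simplify the given curve by inserting the prescribed degree-$2$ type B vertices of $\alpha''$ at the points where the edges of $\Gamma'$ meet the designated faces $\Delta^{I_{v''}}$, and then pull back to a point of $\oM_\alpha$ by setting $h(v)\coloneqq h''(\phi(v))$ for every $v\in V(\Gamma)$, $\lambda_e\coloneqq 0$ on edges inside fibres of $\phi$, and $\lambda_e$ equal to the length of the image edge in $\Gamma''$ otherwise. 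Applying $\iota_\alpha$ to this point recovers the original curve, establishing the reverse inclusion. The main obstacle is precisely this un-simplification step: one must verify that the combinatorial data prescribed by $\alpha''$ admits a realisation consistent with the given curve of type $\alpha'$, which rests on the compatibility of weight vectors and face assignments recorded in \cref{def:degeneration_combinatorial_type}(i)--(ii).
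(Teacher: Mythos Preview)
Your approach is essentially the same as the paper's: both realize $M_\alpha$ as an open convex polyhedron by choosing linear coordinates that encode vertex positions and edge lengths, then obtain $\oM_\alpha$ by relaxing the strict inequalities to non-strict ones and read off the image of $\iota_\alpha$ as the simplifications of degenerations. The paper uses a more economical parametrization---the position of a single root vertex together with the edge lengths, with cycle-closure constraints---whereas you record all vertex positions and edge lengths subject to the edge relations \textup{(b)}; your write-up is more detailed than the paper's on the construction of $\iota_\alpha$, the no-self-loop check, the preservation of the degree bound under degeneration (which the paper leaves implicit), and the reverse inclusion.
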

\begin{proof}
Our proposition is related to \cite[Proposition 2.23]{Mikhalkin_Enumerative_2005}, \cite[Lemma 2.2]{Shustin_A_tropical_approach_2005} and \cite[Propositions 3.9, 3.12]{Gathmann_The_numbers_of_tropical_plane_curves_2007}.

An $n$-pointed parametrized tropical curve $(\Gamma,(\gamma_i),h)$ in $\SX$ with the given combinatorial type $(\Gamma,(\gamma_i))$ is determined by the position $h(v)$ of every vertex $v$ of $\Gamma$.
Since $\Gamma$ is connected and the tropical weight vectors are given by the combinatorial type, if we fix a ``root vertex'' $v_0$ of $\Gamma$, the set $M_\alpha$ is naturally a subset of the real affine space $\R^{\IX}\times \R^{E(\Gamma)}$ whose coordinates are
\begin{itemize}
\item the position $h(v_0)\in\SX\subset\R^{\IX}$ of the fixed root vertex $v_0$,
\item the lengths of the image $h(e)\subset\SX$ for every edge $e$ of $\Gamma$.
\end{itemize}
It is cut out by finitely many linear equations and linear strict inequalities corresponding to the following conditions:
\begin{enumerate}[(i)]
\item For every vertex $v$ of $\Gamma$, the image $h(v)$ lies in the relative interior of the face $\Delta^{I_v}$ of $\SX$.
\item The lengths of the images $h(e)$ are positive.
\item For every cycle in the graph $\Gamma$, the image of this cycle closes up in $\SX$.
\end{enumerate}
Note that a different choice of the root vertex $v_0$ corresponds to an affine automorphism.
So we have proved the first part of our proposition.
To describe the closure $\oM_\alpha$, it suffices to change the Conditions (i)-(ii) above to the following:
\begin{enumerate}[(i')]
\item For every vertex $v$ of $\Gamma$, the image $h(v)$ lies in the face $\Delta^{I_v}$ of $\SX$.
\item The lengths of the images $h(e)$ are non-negative.
\end{enumerate}
So strict inequalities change to non-strict inequalities and we obtain all possible degenerations of the combinatorial type $(\Gamma,(\gamma_i))$.
\end{proof}

\begin{proof}[Proof of \cref{thm:tropical_compactness}]
Lemmas \ref{lem:bound_vertices}, \ref{lem:bound_edges} and \ref{lem:bound_weight_vectors} imply that there are only finitely many combinatorial types occurring in the set $M_{g,n}(\SX,A)$.
Now we can construct $M_{g,n}(\SX,A)$ by attaching the closed polyhedrons of the form $\oM_\alpha$ in \cref{prop:space_fixed_combinatorial_type} according to the partial order given by degenerations of combinatorial types.
We conclude our proof of the theorem by \cref{prop:space_fixed_combinatorial_type}.
\end{proof}

\section{The moduli stack of non-archimedean analytic stable maps}\label{sec:stack_of_stable_maps}

In this section, we consider families of \kanal curves in our \kanal space $X$.
In order to compactify the universal family of such curves, we introduce the analog of Kontsevich's stable map in non-archimedean analytic geometry.

\begin{defin}[cf.\ {\cite[\S 1.1]{Kontsevich_Enumeration_1995}}, {\cite[\S 2]{Abramovich_Stable_2001}}]\label{def:stable_maps_(tropicalization_moduli)}
Let $T$ be a \kanal space.
An \emph{\gn \kanal stable map} $\big(C\rightarrow T, (s_i), f\big)$ into $X$ over $T$ consists of a morphism $C\rightarrow T$, a morphism $f\colon C\rightarrow X$ and $n$ morphisms $s_i\colon T\rightarrow C$ such that
\begin{enumerate}[(i)]
\item The morphism $C\rightarrow T$ is a proper flat\footnote{We refer to \cite{Ducros_Families_2011,Abbes_Elements_2010,Bosch_Formal_I,Bosch_Formal_II} for the notion of flatness in non-archimedean analytic geometry.} family of curves;
\item The geometric fibers of $C\rightarrow T$ are reduced with at worst double points as singularities, and are of arithmetic genus $g$;
\item The $n$ morphisms $s_i\colon T\rightarrow C$ are disjoint sections of $C\rightarrow T$ which land in the smooth locus of $C\rightarrow T$;
\item (Stability condition) For any geometric point $t$ in $T$, the automorphism group of the fiber $\big(C_t\to\{t\}, (s_i(t)), f_t\colon C_t\to X\big)$ is a finite \kanal group\footnote{It is a finite constant group when the field $k$ has characteristic zero.}.
\end{enumerate}
\end{defin}

Let $\hL$ be a Kähler structure on $X$ with respect to a strictly semi-stable formal model $\fX$ of $X$.
Fix a positive real number $A$.
Let $\bcMgn(X,A)$ denote the moduli stack of \gn \kanal stable maps into $X$ whose degree with respect to $\hL$ is bounded by $A$.

The main result of \cite{Yu_Gromov_2014} is the following theorem.

\begin{thm}[Non-archimedean Gromov compactness]\label{thm:non-archimedean_Gromov_(tropicalization_moduli)}
The stack $\bcMgn(X,A)$ is a compact \kanal stack.
If we assume moreover that the \kanal space $X$ is proper and that the residue field $\tilde k$ has characteristic zero, then $\bcMgn(X,A)$ is a proper \kanal stack.
\end{thm}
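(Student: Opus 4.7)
The plan is to reduce the analytic compactness statement to a statement in formal/algebraic geometry via semi-stable reduction, using the Kähler structure to provide the essential boundedness input. The strategy has three stages that I would carry out in order: boundedness, formal stable reduction, and assembling the stack.

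\emph{First stage (boundedness).} Strict convexity of the metrization $\widehat L$ implies that each non-contracted irreducible component of the domain $C$ contributes a strictly positive amount to $\deg_{\widehat L} f$, while the stability condition forces every contracted component to carry enough marked points or nodes. Combined with the genus bound $g$, this yields uniform finiteness of the combinatorial invariants of stable maps in $\bcMgn(X,A)$: the number of irreducible components of the domain, the number of nodes and marked points, and the per-component degrees are all bounded. This mirrors the tropical bounds of \cref{lem:bound_vertices,lem:bound_edges,lem:bound_weight_vectors} (under the analytic--tropical degree inequality of \cref{prop:analytic-tropical_degree}) and reduces the moduli problem to finitely many combinatorial strata.

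\emph{Second stage (formal stable reduction and assembly).} Given a \kanal stable map $(C\to\Spa K,(s_i),f)$ with $K$ a complete \kanal field, I would, after a finite separable extension of $K$, produce a formal stable model by three steps: (i) apply semi-stable reduction for curves (Deligne--Mumford, Temkin) to obtain a strictly semi-stable formal model $\fC$ of $C$; (ii) refine $\fC$ by admissible blow-ups so that $f$ extends to a morphism $\ff\colon\fC\to\fX$ of formal schemes, using properness of $\fXs$ over $\widetilde k$ and the valuative criterion for the formal model; (iii) contract those components of $\fC_s$ which become unstable after incorporating the morphism, to land on the stable formal model. Gluing the resulting formal charts along the finite indexing set of combinatorial types from Stage I, one assembles $\bcMgn(X,A)$ as a \kanal stack whose underlying topological space is compact. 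Under the additional hypotheses that $X$ is proper and $\operatorname{char}\widetilde k=0$, uniqueness of stable reduction in characteristic zero yields separation, and combined with compactness this gives properness as a \kanal stack.

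\emph{Main obstacle.} The hard part will be step (ii): a morphism of generic fibers does not in general extend to a morphism of formal models without further blow-ups of $\fC$, and these blow-ups must be performed in a way that both preserves the strictly semi-stable structure and is functorial enough to glue into a morphism of stacks. Controlling this process canonically, and then correctly identifying and contracting the components of $\fC_s$ that are unstable with respect to $\ff$ and the sections $\fs_i$—without accidentally destabilizing other components—is the technical heart of the argument. Separation in positive residue characteristic, where uniqueness of stable reduction can fail at the level of automorphism groups, is the reason one must restrict to $\operatorname{char}\widetilde k=0$ in the second half of the statement.
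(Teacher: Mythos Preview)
This theorem is not proved in the present paper at all: it is quoted verbatim from the author's earlier work \cite{Yu_Gromov_2014} (see the sentence ``The main result of \cite{Yu_Gromov_2014} is the following theorem'' immediately preceding the statement). There is therefore no ``paper's own proof'' to compare against; your proposal is a sketch of how that external reference presumably proceeds.

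That said, your outline is broadly in the right spirit---boundedness from the K\"ahler structure, formal semi-stable reduction to extend the map to a formal model, then assembling charts---and is consistent with what \cite{Yu_Gromov_2014} is cited for elsewhere in the paper (e.g.\ \cite[Theorem~1.5]{Yu_Gromov_2014} in Step~1 of \S\ref{sec:parametrized_tropical_curves} and \cite[Corollary~9.5]{Yu_Gromov_2014} in the remark after \cref{lem:bound_weight_vectors}). Two cautions, however. First, in your Stage~II(ii) you invoke ``properness of $\fXs$ over $\widetilde k$'' to extend $f$ to $\ff$, but the first assertion of the theorem does \emph{not} assume $X$ proper; the extension argument must work for an arbitrary strictly semi-stable $\fX$, and the compactness comes from the degree bound alone. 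Second, your diagnosis of the characteristic-zero hypothesis is off: stable reduction for \emph{curves} holds in all characteristics, so the restriction $\operatorname{char}\widetilde k=0$ is not about uniqueness of curve models but rather about the availability of resolution/semi-stable models on the target side and the construction of the stack (cf.\ the footnote after \cref{def:sss_formal_model} citing Temkin). If you intend to actually write the proof, you would need to address these points and, as you correctly flag, the functoriality of the blow-up/contraction procedure in step~(ii)--(iii).
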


Roughly, the theorem above means that the abstract moduli stack $\bcMgn(X,A)$ locally looks like \kanal spaces.
We will study the tropicalization of the \kanal moduli stack $\bcMgn(X,A)$ in \cref{sec:continuity_of_tropicalization,sec:polyhedrality}.

\section{Continuity of tropicalization}\label{sec:continuity_of_tropicalization}

Every $k$-analytic stable map into the $k$-analytic space $X$ gives rise to a parametrized tropical curve in the Clemens polytope $\SX$.
So we obtain a map from the moduli space of $k$-analytic stable maps to the moduli space of tropical curves.
We call this map the tropicalization map of the moduli space of $k$-analytic stable maps.
In Section \ref{sec:continuity_of_tropicalization}, we prove the continuity of this tropicalization map using the balancing conditions in \cref{sec:balancing_conditions} and the formal models of families of stable maps developed in \cite{Yu_Gromov_2014}.
In Section \ref{sec:polyhedrality}, we prove that the image of this tropicalization map is compact and polyhedral using the continuity theorem together with the quantifier elimination theorem from the model theory of rigid subanalytic sets.

Let $X$ be a $k$-analytic space, $\fX$ a strictly semi-stable formal model of $X$, and $\hL$ a Kähler structure on $X$ with respect to $\fX$.
Let $\omega$ be the simple density on the Clemens polytope $\SX$ induced by the Kähler structure $\widehat L$ (Definition \ref{def:induced_simple_density}).

Fix two non-negative integers $g,n$ and a positive real number $A$.
Let $M_{g,n}(\SX,A)$ denote the space of simple $n$-pointed genus $g$ parametrized tropical curves in $\SX$ whose degree with respect to $\omega$ is bounded by $A$ (see \cref{thm:tropical_compactness}).

Now let $T$ be a strictly $k$-analytic space and let $\big(C\rightarrow T,(s_i),f\big)$ be an \gn $k$-analytic stable map into $X$ over $T$.
Assume that the degree of every geometric fiber $\big(C_t\to\{t\},(s_i(t)),f_t\colon C_t\to X\big)$ with respect to $\hL$ is bounded by $A$.
By \cref{prop:analytic-tropical_degree}, we obtain a set-theoretic tropicalization map $\tau_T$ from $T$ to $M_{g,n}(\SX,A)$.

\begin{thm}\label{thm:continuity}
The map $\tau_T\colon T\rightarrow M_{g,n}(\SX,A)$ is continuous.
\end{thm}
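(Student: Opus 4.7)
I would check continuity pointwise: fix $t_0\in T$, let $(\Gamma_0,(\gamma_{0,i}),h_0)=\tau_T(t_0)$ with combinatorial type $\alpha_0$, and show that any open neighborhood of $\tau_T(t_0)$ in $M_{g,n}(\SX,A)$ pulls back to a neighborhood of $t_0$. By \cref{thm:tropical_compactness} and \cref{prop:space_fixed_combinatorial_type}, such a neighborhood in the target is described by two kinds of data: (a) which combinatorial types $\alpha$ are allowed, namely those for which $\alpha_0$ is the simplification of a degeneration of $\alpha$; and (b) how close the real-parameter data (vertex positions and edge lengths) must be to those of $\tau_T(t_0)$ inside the closure $\oM_{\alpha_0}\subset\oM_\alpha$. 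So the task splits into a combinatorial statement (a) and a metric statement (b), and I would prove each using a formal model of the family.

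\textbf{Step 1 (local formal model).} Possibly after shrinking $T$ to a strictly analytic neighborhood of $t_0$ and passing to a finite separable extension of $k$, I would invoke the formal-model construction for families of \kanal stable maps from \cite{Yu_Gromov_2014} to produce a strictly semi-stable formal model $\fT$ of the neighborhood, together with a formal lift $\ff\colon \fC\to\fX$ of the family, chosen so that $t_0$ reduces to a point of the deepest stratum of $\fT_s$ whose combinatorial structure of $\fC_s$ (after fiberwise simplification and marked-point contraction as in \cref{sec:parametrized_tropical_curves}) matches $\alpha_0$. For a nearby point $t\in T$, the reduction $\pi(t)\in\fT_s$ specializes to $\pi(t_0)$, so the combinatorial structure of the geometric fiber $\fC_s\times_{\fT_s}\pi(t)$ is generically a smooth deformation of $\fC_s\times_{\fT_s}\pi(t_0)$ along the stratum containing $\pi(t)$; nodes degenerate exactly according to the incidence of strata.

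\textbf{Step 2 (weight vectors and combinatorial degeneration).} Using \cref{lem:tropical_weight_vector_as_intersection_number}, the weight vectors of the tropical curve $\tau_T(t)$ at each vertex are recovered as intersection numbers $\deg\bigl(\ff_s|_{\fC_s^{v}}\bigr)^*\mathcal O(D_i)$ on irreducible components of the relevant fiber of $\fC_s$. Since $\fC\to\fT$ is proper and flat, these intersection numbers are constant along each stratum of $\fT_s$, and when we pass from a stratum to a face thereof (the combinatorial operation that joins two components along a node) the weight vectors add, compatibly with the degeneration rule in \cref{def:degeneration_combinatorial_type}~\eqref{item:degeneration:edge}. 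This establishes combinatorial statement (a): the combinatorial type $\alpha_t$ at every $t$ in a neighborhood of $t_0$ admits a degeneration whose simplification is $\alpha_0$.

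\textbf{Step 3 (metric continuity, and the main obstacle).} For the metric statement (b), each vertex position $h_t(v)\in\SX$ is, by \cref{rem:explicit} and \cref{prop:functoriality}, the image under $\tau_\fX$ of a canonical point of $C_t$ reducing to the corresponding component of $\fC_s$; its coordinates are valuations of local equations of the divisors $D_i$ pulled back by $\ff_\eta$. Such valuations are continuous functions on the Berkovich space $\fC_\eta$, and they descend to continuous functions on $T$ through the sections provided by the components of $\fC_s$. When $t\to t_0$ and an edge of $\alpha_t$ collapses, its two endpoints have valuation-coordinates that converge to a single point of $\SX$, and its length (also given by a difference of valuations) tends to zero — precisely the passage from $M_{\alpha_t}$ into the face $M_{\alpha_0}$ of $\oM_{\alpha_t}$ described in \cref{prop:space_fixed_combinatorial_type}. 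The main obstacle is Step 1: ensuring that the formal model $\fT$ of the base can be chosen simultaneously strictly semi-stable \emph{and} compatible with a formal lift $\ff\colon\fC\to\fX$ of the whole family, so that the vanishing-cycle identification of Step 2 applies uniformly on a neighborhood of $t_0$; this is exactly what is provided by the formal-model theorems of \cite{Yu_Gromov_2014}. With Step 1 in hand, Steps 2 and 3 together yield the continuity of $\tau_T$ at $t_0$.
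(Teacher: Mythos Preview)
Your outline shares the right ingredients with the paper --- a formal model of the family via \cite{Yu_Gromov_2014}, and the vanishing-cycle/intersection-number identification of weight vectors from \cref{sec:balancing_conditions} to control the combinatorial type under specialization --- but the way you handle the \emph{metric} part (your Step~3) is where the argument breaks down, and it is precisely the point where the paper takes a different route.

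The gap is in the sentence ``they descend to continuous functions on $T$ through the sections provided by the components of $\fC_s$.'' Irreducible components of $\fC_s$ are closed subschemes of the special fiber; they do not give sections $T\to C$, and the Shilov-type point of $C_t$ attached to a component of the fiber $(\fC_s)_{\pi(t)}$ is not the restriction of a single point of $\fC_\eta$ varying with $t$. So there is no function on $T$ whose valuation is $h_t(v)$ in any direct sense, and the continuity of $t\mapsto h_t(v)$ is exactly the statement that needs proof, not a consequence of the continuity of $|\cdot|$ on Berkovich spaces. (A related issue: in Step~1 you ask for a strictly semi-stable $\fT$ with $t_0$ reducing to the deepest stratum and fiberwise combinatorics matching $\alpha_0$; the cited formal-model theorem does not hand you this.)

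The paper avoids a direct metric argument altogether. It first shows (\cref{lem:base_of_topology}) that the open sets $U(\cD)$ attached to \emph{subdivision data} $\cD=(\oSX,(\Gamma,(\gamma_i)))$ form a base for the topology on $M_{g,n}(\SX,A)$: by refining the Clemens polytope to $\oSX$, the real parameters (vertex positions, edge lengths) are absorbed into the combinatorial type in $\oSX$. Then, for a fixed $\cD$, it passes to a refined strictly semi-stable model $\ofX$ of $X$ with $S_{\ofX}\simeq\oSX$, builds a formal model $(\fC\to\fT,(\fs_i),\ff)$ of the family into $\ofX$, and proves the purely combinatorial \cref{lem:degenerations_formal_model}: along a specialization $\bar b\rightsquigarrow\bar b'$ in $\fT_s$, the combinatorial type over $\bar b$ is a degeneration of that over $\bar b'$ (weight-vector constancy coming from the local constancy of the vanishing-cycle map, \cref{lem:composition}). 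Continuity then follows from the anti-continuity of the reduction map $\pi_\fT$: the set $\pi_\fT^{-1}\bigl(\overline{\{\bar b\}}\bigr)$ is open and lands in $U(\cD)$. In short, the paper trades your Step~3 for a subdivision trick that makes the problem discrete; your Step~2 is essentially the content of \cref{lem:degenerations_formal_model}, but your Step~3 would need a separate argument that the paper deliberately sidesteps.
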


We begin the proof by introducing some new notions.

\begin{defin}
Let $\nu$ be a positive integer.
A \emph{$\nu$-semi-stable subdivision} $\oSX$ of the Clemens polytope $\SX$ is a finite rational simplicial subdivision of $\SX$ such that the vertices of $\oSX$ lie in the lattice $\frac{1}{\nu}\Z^{\IX}\subset\R^{\IX}$ and that every $d$-dimensional simplex of $\oSX$ has volume $\sqrt{d+1}/(\nu^d d!)$.
\end{defin}

Let $\oSX$ be a $\nu$-semi-stable subdivision of $\SX$.
Let $\oIX$ denote the set of vertices of $\oSX$.
Let $\Delta^{\oIX}$ denote the simplex in $\R^{\oIX}_{\ge 0}$ given by the equation $\sum x_i = 1/\nu$.
We have natural embeddings $\oSX\subset\Delta^{\oIX}\subset\R^{\oIX}$.
So we can consider parametrized tropical curves and combinatorial types in $\oSX$ as in Definitions \ref{def:parametrized_tropical_curve} and \ref{def:combinatorial_type}.
The map $\oSX\to\SX$ induces a linear map of vector spaces $\R^{\oIX}\to\R^{\IX}$ and a morphism of lattices $\Z^{\oIX}\to\Z^{\IX}$.
In this way, parametrized tropical curves in $\oSX$ project to parametrized tropical curves in $\SX$.

\begin{defin}
A \emph{subdivision datum} $\cD=\big(\oSX,(\Gamma,(\gamma_i))\big)$ consists of
\begin{enumerate}[(i)]
\item a $\nu$-semi-stable subdivision $\oSX$ of the Clemens polytope $\SX$ for a positive integer $\nu$, and
\item an $n$-pointed combinatorial type $(\Gamma,(\gamma_i))$ in $\oSX$.
\end{enumerate}
\end{defin}

Given a subdivision datum $\cD=\big(\oSX,(\Gamma,(\gamma_i))\big)$, let $M_{g,n}(\oSX,A)$ be the space of simple $n$-pointed genus $g$ parametrized tropical curves in $\oSX$ that project to parametrized tropical curves in $\SX$ with tropical degree bounded by $A$.
Via projection followed by simplification, we obtain a natural bijection $M_{g,n}(\oSX,A)\xrightarrow{\sim} M_{g,n}(\SX,A)$.

The space $M_{g,n}(\oSX,A)$ has a natural stratification whose open strata corresponds to combinatorial types in $\oSX$.
So it is a subdivision of the stratification of $M_{g,n}(\SX,A)$.

Let $\Xi(\cD)$ be the finite set of $n$-pointed combinatorial types $(\Gamma',(\gamma'_i))$ in $\oSX$ such that
\begin{enumerate}[(i)]
\item $(\Gamma',(\gamma'_i))$ occurs in the space $M_{g,n}(\oSX,A)$,
\item $(\Gamma,(\gamma_i))$ is the simplification of a degeneration of $(\Gamma',(\gamma'_i))$.
\end{enumerate}

For each $n$-pointed combinatorial type $(\Gamma',(\gamma'_i))\in\Xi(\cD)$, let $\Delta^\circ_{(\Gamma',(\gamma'_i))}$ denote the open stratum of $M_{g,n}(\oSX,A)$ corresponding to $(\Gamma',(\gamma'_i))$.
Put
\[U(\cD)\coloneqq\bigcup_{(\Gamma',(\gamma'_i))\in\Xi(\cD)} \Delta^\circ_{(\Gamma',(\gamma'_i))}.\]
By the construction of the topology on the space $M_{g,n}(\SX,A)$ in the proof of \cref{thm:tropical_compactness}, we see that $U(\cD)$ is an open subset of $M_{g,n}(\SX,A)$.
Moreover, by the construction of the polyhedral structure on every stratum of $M_{g,n}(\SX,A)$ in the proof of \cref{prop:space_fixed_combinatorial_type}, we observe that the subdivision $M_{g,n}(\oSX,A)$ of $M_{g,n}(\SX,A)$ can be as fine as possible when we refine the subdivision $\oSX$ of $\SX$.
So we have proved the following lemma.

\begin{lem}\label{lem:base_of_topology}
The subsets $U(\cD)$ for all subdivision data $\cD=\big(\oSX,(\Gamma,(\gamma_i))\big)$ form a base for the topology on $M_{g,n}(\SX,A)$.
\end{lem}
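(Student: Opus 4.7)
\medskip

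The plan is to verify the two defining conditions for a base of a topology: first, each $U(\cD)$ is open (this is already noted in the preamble, following from the description of the topology on $M_{g,n}(\SX,A)$ in the proof of \cref{thm:tropical_compactness}); second, for every open $V\subset M_{g,n}(\SX,A)$ and every $p\in V$, one can find a subdivision datum $\cD$ with $p\in U(\cD)\subset V$. Only the second condition requires work.

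First, I would construct a natural subdivision datum containing $p$. Given $p\in V$ corresponding to a simple parametrized tropical curve $(\Gamma_p,(\gamma_i),h_p)$ with combinatorial type $\alpha_p$ in $\SX$, for any $\nu$-semi-stable subdivision $\oSX$ of $\SX$ I would refine $(\Gamma_p,(\gamma_i),h_p)$ into a parametrized tropical curve in $\oSX$ by inserting additional vertices wherever $h_p(e)$ crosses a new codimension-one face of $\oSX$ lying in the interior of a face of $\SX$, and then simplifying. This produces a simple parametrized tropical curve in $\oSX$ with some combinatorial type $\tilde\alpha_p$, yielding the subdivision datum $\cD=(\oSX,\tilde\alpha_p)$, and by construction $p\in\Delta^\circ_{\tilde\alpha_p}\subset U(\cD)$.

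The main obstacle is the second inclusion $U(\cD)\subset V$ for sufficiently fine $\oSX$. By \cref{prop:space_fixed_combinatorial_type}, openness of $V$ is controlled cell-by-cell in the polyhedral complex $M_{g,n}(\SX,A)$: the pullback of $V$ to each $\oM_\alpha$ is an open neighborhood of the corresponding part of $p$'s preimage, and can thus be described as the conjunction of finitely many open linear inequalities on vertex positions and edge lengths. A point $q\in U(\cD)$ corresponds to a parametrized tropical curve in $\oSX$ with combinatorial type $\beta\in\Xi(\cD)$, so that $\tilde\alpha_p$ is the simplification of a degeneration of $\beta$. Unpacking \cref{def:degeneration_combinatorial_type}, this constrains every vertex $v$ of $\beta$ to sit in a face $\Delta^{I_v}\subset\oSX$ contained in the closed face of $\oSX$ containing the image of the corresponding vertex of $\tilde\alpha_p$, and constrains the combinatorial shape of $\beta$'s underlying graph. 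Projecting back to $\SX$ and tracking the linear relations among positions and lengths, this forces the vertices of $q$ (in $\SX$) to lie within small star neighborhoods of those of $p$, of size bounded by the diameter of the cells of $\oSX$.

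The conclusion then follows by a compactness-free direct argument: since the diameter of the cells of $\oSX$ can be made arbitrarily small by refining $\oSX$ (for instance by barycentric subdivision followed by appropriate rescaling to maintain the $\nu$-semi-stable condition), and since $V$ contains an open polyhedral neighborhood of $p$ in each $\oM_\alpha$ cut out by finitely many linear inequalities, one can choose $\oSX$ fine enough that every $\beta\in\Xi(\cD)$ describes tropical curves satisfying the required inequalities. Hence $U(\cD)\subset V$, proving that the collection $\{U(\cD)\}$ is a base for the topology on $M_{g,n}(\SX,A)$.
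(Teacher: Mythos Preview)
Your approach is essentially the same as the paper's: the paper's entire argument (in the paragraph immediately preceding the lemma) is the observation that each $U(\cD)$ is open and that refining $\oSX$ makes the induced stratification $M_{g,n}(\oSX,A)$ of $M_{g,n}(\SX,A)$ arbitrarily fine, so the open stars $U(\cD)$ form a base; you have unpacked this second observation in detail.

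One slip to correct: when you invoke \cref{def:degeneration_combinatorial_type}(ii), the inclusion goes the other way. The condition is $I_{\phi(v')}\subset I_{v'}$, so $\Delta^{I_{\phi(v')}}$ is a \emph{face of} $\Delta^{I_{v'}}$; hence the vertex $v'$ of $\beta$ lies in a face of $\oSX$ having the face of the corresponding vertex of $\tilde\alpha_p$ as a subface, i.e.\ in the closed star of that face, not in a face contained in it. You say exactly this (``star neighborhoods'') in the next sentence, so the conclusion is unaffected, but the quoted clause should be reworded.
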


The next lemma relates degenerations of combinatorial types with formal models.

\begin{lem}\label{lem:degenerations_formal_model}
Let $\fT$ be a formal scheme of finite presentation over $\kc$.
Let $\big(\fC\to\fT,(\fs_i),\ff\big)$ be an \gn formal stable map into $\fX$  over $\fT$ (cf.\ \cite[\S 8]{Yu_Gromov_2014}).
Let $\pi_\fT\colon\fT_\eta\to\fT_s$ denote the reduction map.
Let $\bar b, \bar b'$ be two points in $\fT_s$ such that $\bar b'$ is a specialization of $\bar b$.
Let $\big(C_b,(s_i),f_b\big)$ (resp.\ $\big(C_{b'},(s'_i),f_{b'}\big)$) be an analytic stable map into $X$ corresponding to a point in $\pi\inv_\fT(\bar b)$ (resp.\ $\pi\inv_\fT(\bar b')$).
Let $(\Gamma_b,(\gamma_i),h_b)$ and $(\Gamma_{b'},(\gamma'_i),h_{b'})$ be the associated parametrized tropical curves in $\SX$ respectively.
Let $(\Gamma_b,(\gamma_i))$ and $(\Gamma_{b'},(\gamma'_i))$ be the associated combinatorial types in $\SX$ respectively.
Then $(\Gamma_b,(\gamma_i))$ is a degeneration of $(\Gamma_{b'},(\gamma'_i))$.
\end{lem}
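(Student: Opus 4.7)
The strategy is to read off both combinatorial types from a common strictly semi-stable formal modification of the family $\fC\to\fT$ and to construct the surjection $\phi$ via specialization of irreducible components in this modification. First, after enlarging $k$ and performing suitable admissible modifications of $\fT$ and $\fC$ that do not affect $b$, $b'$, or their reductions, I would produce a strictly semi-stable formal modification $\widetilde{\fC}\to\fT$ together with an extending morphism $\widetilde{\ff}\colon\widetilde{\fC}\to\fX$ refining $\ff$. Its existence rests on the strictly semi-stable reduction techniques already invoked in the construction of $\bcMgn(X,A)$ in \cite{Yu_Gromov_2014}. To avoid global statements over $\fT$, one can reduce to a one-parameter specialization first, by pulling back along a morphism $\Spec V\to\fT_s$ from a valuation ring $V$ sending the generic point to $\bar b$ and the closed point to $\bar b'$ (such a $V$ exists because $\bar b'$ lies in the closure of $\bar b$ in $\fT_s$), and then lifting this to a morphism of formal schemes.

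From $\widetilde{\fC}$, the combinatorial type $(\Gamma_b,(\gamma_i))$ is read off exactly as in \cref{sec:parametrized_tropical_curves}: vertices correspond to irreducible components of the special fiber of $\widetilde{\fC}$ over $\bar b$; edges correspond to nodes; the face $I_v$ records which stratum of $\fXs$ contains the image of the component under $\widetilde{\ff}_s$; and the weight vectors are the intersection numbers provided by \cref{lem:tropical_weight_vector_as_intersection_number}. Applying the same recipe at $\bar b'$ yields $(\Gamma_{b'},(\gamma'_i))$. I then define $\phi\colon V(\Gamma_{b'})\to V(\Gamma_b)$, before simplification, by sending an irreducible component $Z'$ of the fiber over $\bar b'$ to the unique irreducible component $Z$ of the fiber over $\bar b$ whose closure in the total special fiber $\widetilde{\fC}_s$ contains $Z'$. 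Well-definedness and surjectivity of $\phi$ follow from flatness of $\widetilde{\fC}\to\fT$ together with the specialization $\bar b'\in\overline{\{\bar b\}}$.

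It then remains to verify the four conditions of \cref{def:degeneration_combinatorial_type}. For condition (i), an edge of $\Gamma_{b'}$ joining two vertices with distinct $\phi$-images must correspond to a node of $\widetilde{\fC}_{\bar b'}$ that persists from $\widetilde{\fC}_{\bar b}$ (newly created nodes appearing on specialization necessarily lie on components sharing a single $\phi$-image, as they come from further degeneration within a single generic component), producing the required bijection on edges; the matching of weight vectors is then guaranteed by \cref{lem:tropical_weight_vector_as_intersection_number}, since these intersection numbers are preserved under flat specialization. Condition (ii) is immediate: if $Z'\subset\overline{Z}$ in $\widetilde{\fC}_s$, the image of $Z'$ under $\widetilde{\ff}_s$ lies in the closure of the image of $Z$, i.e.\ in a smaller stratum of $\fXs$ and hence a larger $I$. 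Condition (iii) follows from constancy of arithmetic genus in the flat nodal family $\widetilde{\fC}_s\to\fT_s$, combined with the standard decomposition into vertex genera plus the first Betti number of the connecting dual subgraph. Condition (iv) is immediate from continuity of the sections $\fs_i$: the component carrying $\fs_i(\bar b')$ specializes from the one carrying $\fs_i(\bar b)$, forcing $\phi(\gamma'_i)=\gamma_i$. The main obstacle is producing the common strictly semi-stable model $\widetilde{\fC}$ in the first step; once that is secured, the remaining verifications are routine specialization arguments.
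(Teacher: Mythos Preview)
Your overall architecture---construct $\phi$ via specialization of irreducible components and then verify the four conditions of \cref{def:degeneration_combinatorial_type}---matches the paper's.  The paper, however, works directly with the given formal stable map $\big(\fC\to\fT,(\fs_i),\ff\big)$: its special fiber $\fC_s\to\fT_s$ is already a flat family of pointed nodal curves mapping to $\fXs$, so no further strictly semi-stable modification $\widetilde{\fC}$ is needed.  Conditions (ii), (iii), (iv), and the bijection on persistent edges in (i) then follow exactly by the specialization arguments you sketch.

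The genuine gap is in your treatment of the weight-vector equalities in condition (i).  You invoke \cref{lem:tropical_weight_vector_as_intersection_number}, but that lemma computes the \emph{sum} $\sigma_{v_0}=\sum_{e\ni v_0} w_{(v_0,e)}$ as an intersection number on the component $\fC_s^{v_0}$; it says nothing about an individual edge weight $w_{(v,e)}$, which is defined analytically (Step~2 of \cref{sec:parametrized_tropical_curves}) and has no obvious expression as a global intersection number on a component.  So ``these intersection numbers are preserved under flat specialization'' does not apply to the quantity you need.  The paper instead uses the vanishing-cycle interpretation of individual weight vectors from \cref{lem:composition}: along the section $\sigma$ of nodes over the Zariski closure $\fT^b_s$ of $\bar b$, the sheaf $\bar\sigma^* R^1\Phi\QCe$ is locally constant, and the induced morphism $\lambda\colon\Lambda\to\bar\sigma^* R^1\Phi\QCe$ from the constant sheaf with stalk $j^* R^1\Phi\QXe$ encodes $w_{(v,e)}$ at each geometric point.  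Local constancy over the connected base $\fT^b_s$ then forces $w_{(u',e')}=w_{(\phi(u'),\phi(e'))}$.  This is the key input you are missing; once you replace your appeal to \cref{lem:tropical_weight_vector_as_intersection_number} by this vanishing-cycle argument (and drop the unnecessary modification $\widetilde{\fC}$), the proof goes through.
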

\begin{proof}
Let $(C_{\bar b},(\bar s_i),f_{\bar b})$ (resp.\ $(C_{\bar b'},(\bar s'_i),f_{\bar b'})$) be the algebraic stable map into $\fXs$ corresponding to the point $\bar b$ (resp.\ $\bar b'$).
Then the nodal curve $C_{\bar b'}$ is a degeneration of the nodal curve $C_{\bar b}$.
In other words, $C_{\bar b}$ is a smoothing of $C_{\bar b'}$.
So we obtain a surjective map from the set of the irreducible components of $C_{\bar b'}$ to the set of the irreducible components of $C_{\bar b}$.
This induces a surjective map $\phi\colon V(\Gamma_{b'})\to V(\Gamma_b)$ satisfying \cref{def:degeneration_combinatorial_type} Conditions (\ref{item:degeneration:genus}) and (\ref{item:degeneration:points}).
It also induces bijections $\phi\colon E(u',v')\to E(\phi(u'),\phi(v'))$ for every pair of vertices $u'$, $v'$ of $\Gamma_{b'}$ such that $\phi(u')\neq\phi(v')$.
Since any node of $C_{\bar b}$ remains a node of $C_{\bar b'}$, any edge of $\Gamma_b$ is of the form $\phi(e')$ for some edge $e'$ of $\Gamma_{b'}$.
Therefore, \cref{def:degeneration_combinatorial_type} Condition (\ref{item:degeneration:edge}) is satisfied except possibly for the equalities of tropical weight vectors.

The assumption that $\bar b'$ is a specialization of $\bar b$ implies that if an irreducible component $C^v_{\bar b}$ of $C_{\bar b}$ maps to a closed stratum $D_I$ of $\fXs$, then the irreducible components of $C_{\bar b'}$ corresponding to the degeneration of $C^v_{\bar b}$ map to $D_I$ as well.
Therefore, \cref{def:degeneration_combinatorial_type} Condition (\ref{item:degeneration:face}) is satisfied.

Now let $\fT_s^b$ be the Zariski closure of the point $\bar b$.
Let $\fC^b_s\coloneqq\fC_s\times_{\fT_s}\fT^b_s$.
Let $\sigma_0$ be a node of $C_{\bar b}$.
The node $\sigma_0$ remains a node for the family of nodal curves $\fC^b_s$ over $\fT^b_s$.
So we obtain a section $\sigma\colon\fT^b_s\to\fC^b_s$.
Put $\fT^b_{\bar s}\coloneqq\fT^b_s\times\widetilde{k^s}$, $\fC^b_{\bar s}\coloneqq\fC^b_s\times\widetilde{k^s}$ and $\bar{\sigma}\coloneqq\sigma\times\widetilde{k^s}\colon\fT^b_{\bar s}\to\fC^b_{\bar s}$.

Assume that $\ff_s\colon\fC_s\to\fXs$ maps the section $\sigma$ into a closed stratum $D_I$ of $\fXs$.
Let $\oD_I\coloneqq D_I\times\widetilde{k^s}$ and let $j\colon\oD_I\hookrightarrow\fXbs$ denote the closed immersion as in \cref{sec:balancing_conditions}.
Since $\fC\to\fT$ is a family of nodal curves, the sheaf $\bar{\sigma}^* R^1\Phi\QCe$ is locally constant.
Let $\Lambda$ be the constant sheaf on $\fT^b_{\bar s}$ associated to the $\Q_\ell$-vector space $j^*R^1\Phi\QXe$.
The morphism $\ff\colon\fC\to\fX$ of formal schemes induces a morphism $\lambda\colon\Lambda\to\bar{\sigma}^* R^1\Phi\QCe$ of locally constant sheaves on $\fT^b_s$.
Assume that $\sigma(\bar b')$ corresponds to an edge $e'\in E(u',v')$ for two vertices $u', v'$ of $\Gamma_{b'}$.
Then $\sigma(\bar b)$ corresponds to the edge $\phi(e')$ of $\Gamma_b$.
By \cref{lem:composition}, the weight vector $w_{(u',e')}$  (resp.\ $w_{(\phi(u'),\phi(e'))}$) can be computed by the stalk of the morphism $\lambda$ at a geometric point over $\bar b'$ (resp.\ $\bar b$).
Since the morphism $\lambda$ is locally constant and the base $\fT^b_s$ is connected, we deduce that $w_{(u',e')}=w_{(\phi(u'),\phi(e'))}$.
In other words, the equalities of weight vectors in \cref{def:degeneration_combinatorial_type} Condition (\ref{item:degeneration:edge}) hold.
So we have proved our lemma.
\end{proof}

\begin{rem}
The part concerning vanishing cycles in the proof of \cref{lem:degenerations_formal_model} means intuitively that the winding numbers do not change under deformations.
\end{rem}

\begin{proof}[Proof of \cref{thm:continuity}]
In order to prove the continuity of the map $\tau_T\colon T\rightarrow M_{g,n}(\SX,A)$, it suffices to show that for any point $b\in T$ and any subdivision datum
$\cD=\big(\oSX,(\Gamma,(\gamma_i))\big)$
such that the associated open subset $U(\cD)$ contains the point $\tau_T (b)\in M_{g,n}(\SX,A)$, the inverse image $\tau_T^{-1} (U(\cD))$ is a neighborhood of $b$ in $T$.

We fix $b$ and $\cD$ as above.
Up to passing to a finite extension of the ground field $k$, we can find a strictly semi-stable formal model $\ofX$ for the $k$-analytic space $X$ such that the associated Clemens polytope $S_{\ofX}$ is isomorphic to $\oSX$ (cf.\ \cite{Kempf_Toroidal_1973}).
Using \cite[Theorem 1.5]{Yu_Gromov_2014}, replacing $T$ by a quasi-étale covering if necessary,
one can find a formal model $\ft\colon \fT\rightarrow\bcMgn(\ofX,A)$ for the morphism $t\colon T\rightarrow\bcMgn(X,A)$, i.e.\ an $n$-pointed genus $g$ formal stable map $\big(\fC\rightarrow\fT,(\fs_i),\ff\big)$ into $\ofX$ over $\fT$ which gives back the \kanal stable map $\big(C\to T,(s_i), f\big)$ into $X$ over $T$ when passing to generic fibers.
The finite ground field extension and the quasi-étale covering above are allowed in virtue of the descent of open immersions (cf.\ \cite[Theorem 4.2.7]{Conrad_Relative_2006}).

Let $\pi_\fT\colon T\simeq\fT_\eta\rightarrow\fT_s$ denote the reduction map.
Let $\bar b\coloneqq \pi_\fT(b)$ and let $\fT_s^b$ be the Zariski closure of the point $\bar b$.
By \cref{lem:degenerations_formal_model}, for any point $\bar b'\in\fT^b_s$ and any analytic stable map corresponding to a point in $\pi\inv_\fT(\fT_s^b)$, its associated tropical curve belongs to the subset $U(\cD)$.
In other words, we have
\[\pi^{-1}_\fT(\fT^b_s) \subset \tau_T^{-1}\big(U(\cD)\big).\]
By the anti-continuity of the reduction map, the set $\pi^{-1}_\fT(\fT^b_s)$ is open in $T$ for the Berkovich topology.
Since $b\in\pi^{-1}_\fT(\fT^b_s)$ by construction, we have proved that $\tau_T^{-1}\big(U(\cD)\big)$ is a neighborhood of $b$ in $T$.
Using Lemma \ref{lem:base_of_topology}, we conclude that the map $\tau_T\colon T\rightarrow M_{g,n}(\SX,A)$ is continuous.
\end{proof}

\begin{cor}\label{cor:continuity}
The tropicalization map $\tau_M\colon \bcMgn(X,A)\rightarrow M_{g,n}(\SX,A)$ is a continuous map.
\end{cor}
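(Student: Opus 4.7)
The plan is to deduce this corollary directly from \cref{thm:continuity} by working locally on a $k$-analytic atlas. Since $\bcMgn(X,A)$ is a compact $k$-analytic stack by \cref{thm:non-archimedean_Gromov_(tropicalization_moduli)}, its underlying topological space carries the quotient topology coming from any surjective morphism $t\colon T\to\bcMgn(X,A)$ from a strictly $k$-analytic space $T$ (obtained from an étale/quasi-étale atlas of the stack). Concretely, a subset $U\subset \bcMgn(X,A)$ is open if and only if its preimage in $T$ is open.

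First I would fix such a surjective morphism $t\colon T\to \bcMgn(X,A)$ from a strictly $k$-analytic space, which by definition of the moduli stack corresponds to an $n$-pointed genus $g$ $k$-analytic stable map $\big(C\to T,(s_i),f\big)$ into $X$ over $T$ with fiberwise $\hL$-degree bounded by $A$. By construction of the tropicalization map, the triangle
\begin{equation*}
\begin{tikzcd}
T \arrow{r}{t} \arrow[swap]{dr}{\tau_T} & \bcMgn(X,A) \arrow{d}{\tau_M} \\
& M_{g,n}(\SX,A)
\end{tikzcd}
\end{equation*}
commutes, since both $\tau_T(b)$ and $\tau_M(t(b))$ are obtained from exactly the same fiberwise data, namely the $k$-analytic stable map $\big(C_b,(s_i(b)),f_b\big)$.

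By \cref{thm:continuity} applied to $T$ together with the family $(C\to T,(s_i),f)$, the composition $\tau_T = \tau_M \circ t$ is continuous. Since $t$ is a surjective morphism from a $k$-analytic space onto the $k$-analytic stack $\bcMgn(X,A)$ whose topology is the quotient topology, continuity of $\tau_M \circ t$ implies continuity of $\tau_M$: for any open $U\subset M_{g,n}(\SX,A)$, the preimage $(\tau_M\circ t)^{-1}(U) = t^{-1}(\tau_M^{-1}(U))$ is open in $T$, hence by the definition of the quotient topology $\tau_M^{-1}(U)$ is open in $\bcMgn(X,A)$.

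The one point that requires a brief justification, and which I would expect to be the only subtle step, is that the topological space underlying the stack $\bcMgn(X,A)$ really does carry the quotient topology induced from an atlas — but this is a general fact about $k$-analytic stacks in the sense used in \cite{Yu_Gromov_2014} and follows from the local structure of the stack as described in \cref{thm:non-archimedean_Gromov_(tropicalization_moduli)}. Once this is in hand, the corollary is immediate, and the polyhedrality part of the corresponding statement in the introduction is then obtained by combining continuity with \cref{thm:relative_polyhedrality}, which is treated in \cref{sec:polyhedrality}.
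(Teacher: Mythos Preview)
Your proposal is correct and matches the paper's intent: the paper states \cref{cor:continuity} without proof, treating it as an immediate consequence of \cref{thm:continuity} applied to an atlas of the stack, which is exactly the argument you spell out. The only point you flag as subtle---that the underlying space of the $k$-analytic stack carries the quotient topology from an atlas---is indeed the standard fact being used implicitly.
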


\begin{cor}\label{cor:compactness_of_Mtgn}
Let $\bMgnt(X,A)$ denote the image of the tropicalization map $\tau_M$.
Then $\bMgnt(X,A)$ is a compact subset of $M_{g,n}(\SX,A)$.
\end{cor}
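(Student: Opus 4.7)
The plan is to deduce this corollary directly from the two main inputs already established, namely the non-archimedean Gromov compactness theorem (\cref{thm:non-archimedean_Gromov_(tropicalization_moduli)}) and the continuity of the tropicalization map (\cref{cor:continuity}), together with the compactness of the target space (\cref{thm:tropical_compactness}).

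First, I would recall that by \cref{thm:non-archimedean_Gromov_(tropicalization_moduli)}, the moduli stack $\bcMgn(X,A)$ is a compact $k$-analytic stack. By \cref{cor:continuity}, the tropicalization map
\[
\tau_M\colon \bcMgn(X,A)\longrightarrow M_{g,n}(\SX,A)
\]
is continuous. Since the image of a compact topological space under a continuous map is compact, the subset $\bMgnt(X,A) = \tau_M\big(\bcMgn(X,A)\big)$ is a compact topological space. To check it is a compact \emph{subspace} of $M_{g,n}(\SX,A)$, one uses that $M_{g,n}(\SX,A)$ is Hausdorff (being a finite gluing of closed convex polyhedrons along face identifications, as constructed in the proof of \cref{thm:tropical_compactness}), so compact subsets are automatically closed, and the subspace topology on $\bMgnt(X,A)$ coincides with its topology as a quotient of $\bcMgn(X,A)$.

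The only mildly subtle point is that $\bcMgn(X,A)$ is a stack, not a topological space, so one should be precise about what ``compact'' and ``continuous'' mean for $\tau_M$. My plan is to pass to an étale atlas $T \to \bcMgn(X,A)$ by a strictly $k$-analytic space $T$, so that the image of $\tau_M$ coincides with the image of $\tau_T\colon T \to M_{g,n}(\SX,A)$ from \cref{thm:continuity}, and compactness of $\bcMgn(X,A)$ translates into compactness of (a quasi-compact open of) the underlying topological space of such an atlas. Since continuity is established in \cref{thm:continuity} at the level of strictly $k$-analytic spaces $T$, the argument reduces to the standard fact that a continuous image of a compact topological space into a Hausdorff space is compact and closed.

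I do not expect a serious obstacle here: the heavy lifting has already been done in the preceding sections (compactness of the source in \cref{sec:stack_of_stable_maps}, compactness of the target in \cref{sec:space_of_tropical_curves}, and continuity of the map in this section). The corollary is essentially a formal consequence; the only care needed is in properly interpreting compactness and continuity across the stacky context, which is handled by descending to an analytic atlas.
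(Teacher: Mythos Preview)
Your proposal is correct and follows essentially the same approach as the paper: the paper's proof is a one-line reference to \cref{thm:non-archimedean_Gromov_(tropicalization_moduli)} and \cref{cor:continuity}, and your argument simply unpacks this (with the standard observation that a continuous image of a compact space is compact), adding some extra care about the stacky source and the Hausdorff target that the paper leaves implicit.
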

\begin{proof}
It follows from Theorem \ref{thm:non-archimedean_Gromov_(tropicalization_moduli)} and Corollary \ref{cor:continuity}.
\end{proof}

\section{Polyhedrality via quantifier elimination}\label{sec:polyhedrality}

In Corollary \ref{cor:compactness_of_Mtgn}, we defined $\bMgnt(X,A)$ to be the subset of $M_{g,n}(\SX,A)$ consisting of tropical curves in $\SX$ that arise from \gn stable maps into $X$ with degree bounded by $A$.
In this section, we will show the polyhedral nature of this subset.

The proof uses quantifier elimination from model theory together with the continuity theorem of Section \ref{sec:continuity_of_tropicalization}.
The model-theoretic approach here is inspired by Antoine Ducros' work \cite{Ducros_Espaces_de_Berkovich_polytopes_2012}.
The model theory of algebraically closed valued fields is used by Ducros in \cite{Ducros_Espaces_de_Berkovich_polytopes_2012} after algebraization of $k$-analytic situations.
However, since we will deal not only with one single $k$-analytic curve but also with families of $k$-analytic curves, it is not clear how to apply the standard algebraization techniques.
Therefore we resort to the model theory of rigid subanalytic sets developed by Leonard Lipshitz and Zachary Robinson \cite{Lipshitz_Rigid_1993,Lipshitz_Model_completeness_2000}.

Let us take a quick review following \cite{Lipshitz_Rigid_1993,Lipshitz_Model_completeness_2000,Martin_Constructibilite_2013,Martin_Tameness_2015}.
We restrict to the case where the ground field $k$ is of discrete valuation because the theory in the general case is more involved.

The language $\LanD$ of the theory of rigid subanalytic sets consists of three sorts: $\cO$, $\fm$ and $\Gamma_0$.
There are binary function symbols $+,-,\cdot$ on $\cO$ and $\fm$, a relation symbol $<$ and a constant $0$ on $\Gamma_0$.
Moreover, there are functions symbols $D_0\colon\cO^2\to\cO$, $D_1\colon\cO^2\to\fm$, $|\cdot|\colon\cO\to\Gamma_0$,
and for each $f\in\kc\langle T_1,\dots,T_m \rangle \llb S_1,\dots,S_n\rrb$, there is a function symbol $f\colon\cO^m\times\fm^n\to\cO$.
If in addition $f$ is in the ideal $(\varpi,S_1,\dots,S_n)$, where $\varpi$ denotes a uniformizer of $k$, there is a function symbol $f\colon\cO^m\times\fm^n\to\fm$.
A term in the language $\LanD$ of the form $f\colon\cO^m\times\fm^n\to\cO$ will be called a \emph{D-function} for simplicity.

Given a non-archimedean field extension $k\subset K$, one can associate to $K$ a standard $\LanD$-structure.
We note that $\cO,\fm,\Gamma_0$ are interpreted as $K^\circ, K^{\circ\circ}$, $|K|$ respectively, and
$D_0$, $D_1$ are interpreted as
\begin{align*}
D_0(x,y)&=\begin{cases}
x/y &\text{if }\abs{x}\le\abs{y}\neq 0\\
0 &\text{otherwise,}
\end{cases}\\
D_1(x,y)&=\begin{cases}
x/y &\text{if }\abs{x}<\abs{y}\\
0 &\text{otherwise.}
\end{cases}\\
\end{align*}
The other symbols have obvious interpretations.

Let $\bbD$ (resp.\ $\bbD^\circ$) denote the closed (resp.\ open) unit disc over $k$, considered as a \kanal space.
Set $\Gamma\coloneqq\sqrt{\abs{k^*}}$.

\begin{defin}[cf.\ \cite{Martin_Constructibilite_2013}]\label{def:subanalytic}
A subset $S$ of $\bbD^m\times (\bbD^\circ)^n \times\Gamma^l$ is called \emph{subanalytic} if it is a boolean combination of subsets of the form
\begin{equation*}
\big\{(x,y,\gamma)\in \bbD^m\times (\bbD^\circ)^n \times\Gamma^l \ \big|\ \abs{f(x,y)}\gamma^u c\leq \abs{g(x,y)}\big\} ,
\end{equation*}
where $f,g$ are $D$-functions\footnote{To evaluate the norm of a $D$-function on a point of the $k$-analytic space $\bbD^m\times (\bbD^\circ)^n$, it suffices to pass to a ground field extension making the point rational.}, $u\in\Z^l$ and $c\in\Gamma$.
\end{defin}

\begin{rem}
Subanalytic sets are exactly those definable by first order formulas without quantifiers in the language $\LanD$.
\end{rem}

The following theorem has many variants in the literature \cite{Bieri_Geometry_1984,Berkovich_Smooth_II_2004,Einsiedler_Non-archimedean_2006,Gubler_Tropical_2007,Ducros_Espaces_de_Berkovich_polytopes_2012}.
The theory of rigid subanalytic sets provides us another proof via the quantifier elimination theorem of Lipshitz \cite[Theorem 3.8.2]{Lipshitz_Rigid_1993}.

\begin{thm}[cf.\ \cite{Martin_Dimensions_2014}]\label{thm:polyhedrality_of_tropicalization}
Let $Y\subset\bbD^m$ be a closed immersion and let $S\subset Y\subset\bbD^m$ be a subanalytic set as in Definition \ref{def:subanalytic}.
Let $h_1,\dots,h_l$ be $l$ $D$-functions on $\bbD^m$.
Then $(\val h_1,\dots,\val h_l)(S)\cap\R^l$ is a finite polyhedral complex in $\R^l$ of dimension less than or equal to the dimension of $Y$, where we put $\val 0 = +\infty$.
\end{thm}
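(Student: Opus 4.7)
The plan is to reduce the statement to Lipshitz's quantifier elimination for the language $\LanD$ \cite{Lipshitz_Rigid_1993}. First, observe that by the very definition of subanalytic, the set $S \subset \bbD^m$ admits a quantifier-free definition in $\LanD$, and the components of the map $(\val h_1, \ldots, \val h_l)$ are obtained by composing the D-functions $h_i$ with the definable norm $|{\cdot}|\colon \cO \to \Gamma_0$. Consequently, the image $T \coloneqq (\val h_1,\ldots,\val h_l)(S)$ is an $\LanD$-definable subset of $\Gamma_0^l$ (a priori with quantifiers, since it is a projection). By Lipshitz's quantifier elimination theorem, $T$ is equivalent to a quantifier-free formula in $\LanD$. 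Since $T$ lives in the pure value-group sort, the resulting formula involves only the symbols of $\Gamma_0$, namely $+$, $-$, $<$, $0$, together with constants from $|k^*|$ that arise as norms of elements of $k^\circ$.

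Next, describe quantifier-free definable subsets of $\Gamma_0^l$. In multiplicative notation these are boolean combinations of inequalities of the form $\gamma^u c_1 \leq c_2$ with $u \in \Z^l$ and $c_1, c_2 \in |k^*| \cup \{0\}$; passing to additive (logarithmic) coordinates turns them into boolean combinations of affine half-spaces of the form $\langle u, x \rangle \leq a$ with $u \in \Z^l$ and $a \in \log|k^*|$. Such boolean combinations define precisely finite rational polyhedral complexes in $(\R \cup \{+\infty\})^l$. Intersecting with $\R^l$ (i.e.\ discarding the coordinate hyperplanes at infinity) produces again a finite rational polyhedral complex, which proves the polyhedrality statement.

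For the dimension bound, the plan is to invoke the dimension theory of rigid subanalytic sets developed by Lipshitz and Robinson, in the form used by Martin \cite{Martin_Dimensions_2014}. The key ingredients are: (a) the dimension of a subanalytic set is well-defined and is preserved under the usual operations (finite union, closure, image under a definable map cannot increase it), and (b) quantifier-free definable subsets of $\Gamma_0^l$ are polyhedral complexes whose dimension (as a polyhedral complex) agrees with their subanalytic dimension. Applying this to the definable surjection $S \twoheadrightarrow T$ yields $\dim T \leq \dim S \leq \dim Y$, as desired.

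The main obstacle is carefully tracking the correspondence between the multiplicative presentation used in Definition \ref{def:subanalytic} and the additive presentation in which polyhedrality is formulated, and honestly invoking the dimension-theoretic input: while quantifier elimination is a clean black box, the inequality $\dim T \leq \dim Y$ rests on the nontrivial subanalytic dimension theory of Lipshitz and Robinson, and some care is required to ensure that the dimension of $T$ as a polyhedral complex coincides with its subanalytic dimension (for instance by arguing cell by cell after a suitable cell decomposition of $S$, on each of which the valuation map has a controlled, piecewise monomial behavior).
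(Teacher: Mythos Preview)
Your proposal is correct and matches the approach the paper itself indicates: the paper does not give a detailed proof of this theorem but states it with a reference to \cite{Martin_Dimensions_2014} and remarks that ``the theory of rigid subanalytic sets provides us another proof via the quantifier elimination theorem of Lipshitz \cite[Theorem 3.8.2]{Lipshitz_Rigid_1993}.'' Your sketch---definability of the image, quantifier elimination to obtain a quantifier-free description in the value-group sort, hence polyhedrality, together with the Lipshitz--Robinson/Martin dimension theory for the bound---is exactly that route. One small caution: in the language $\LanD$ as recalled in the paper, the sort $\Gamma_0$ carries only $<$ and the constant $0$ (the symbols $+,-$ are on $\cO$ and $\fm$), so your remark that the quantifier-free formula on $\Gamma_0^l$ uses ``$+$, $-$'' should be rephrased; the monomials $\gamma^u$ and constants in $\Gamma$ arise as norms of products and of elements of $k^\circ$, which is consistent with \cref{def:subanalytic}.
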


We generalize Theorem \ref{thm:polyhedrality_of_tropicalization} to the relative case.
Let $Y\subset\bbD^{m'}$ and $T\subset\bbD^m$ be closed immersions.
Let $p_0\colon\bbD^{m'}\to\bbD^m$ be a morphism which restricts to a morphism $p\colon Y\to T$ of $k$-affinoid spaces.
Let $f$ be a morphism from $Y$ to the analytification $(\bbG_{\mathrm m,k}^n)\an$ of the algebraic torus $\bbG_{\mathrm m,k}^n$ over $k$.
Let $G$ be the definable set
\[G=\Big\{(t,\gamma)\in T\times \Gamma^l \ \Big|\  \exists y\in Y \Big(\big(p(y)=t\big)\wedge\big(|f|(y)=\gamma\big)\Big)\Big\}.\]
Using quantifier elimination for rigid subanalytic sets,
there exists a positive integer $N$, D-functions $q_1,\dots,q_N,q'_1,\dots,q'_N$ on $\bbD^m$,
$u_1,\dots,u_N\in\Z^l$ and $c_1,\dots,c_N\in\Gamma$ such that the set $G$ can be written as a boolean combination of the subsets
\[\big\{(t,\gamma)\in T\times\Gamma^l \ \big|\ \abs{q_i(t)}\gamma_i^{u_i} c_i\leq \abs{q'_i(t)}\big\} , \text{ for } i=1,\dots,N.\]
By Theorem \ref{thm:polyhedrality_of_tropicalization}, the intersection
\[(\val q_1,\dots,\val q_N,\val q'_1,\dots,\val q'_N)(T)\cap\R^{2N}\]
is a finite polyhedral complex in $\R^{2N}$ of dimension less than or equal to the dimension of $T$.

Let $M_{g,n}(\SX,A)_u$ denote the quotient of $M_{g,n}(\SX,A)$ by the following equivalence relation:
two parametrized tropical curves $(\Gamma,(\gamma_i),h)$ and $(\Gamma',(\gamma'_i),h')$ in $M_{g,n}(\SX,A)$ are in the same equivalence class if $h(\Gamma)=h'(\Gamma')$ as subsets of $\SX$ and $h(\gamma_i)=h'(\gamma'_i)$ for $1\le i\le n$.
As the space $M_{g,n}(\SX,A)$, the space $M_{g,n}(\SX,A)_u$ is also naturally a compact topological space with a stratification whose open strata are open convex polyhedrons.

\begin{prop}\label{prop:relative_polyhedrality_after_forget}
We use the settings in \cref{sec:continuity_of_tropicalization}.
Let $\tau_T\colon T\rightarrow M_{g,n}(\SX,A)$ denote the set-theoretically defined tropicalization map, and let $u$ denote the quotient map from $M_{g,n}(\SX,A)$ to $M_{g,n}(\SX,A)_u$.
Then the image of $T$ under the composite map $u\circ\tau_T$ is polyhedral in $M_{g,n}(\SX,A)_u$, in the sense that its intersection with every open stratum of $M_{g,n}(\SX,A)_u$ is polyhedral.
\end{prop}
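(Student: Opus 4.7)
The plan is to prove polyhedrality one stratum at a time, reducing the problem locally on $T$ to the relative quantifier-elimination statement recalled just before the proposition. Fix an open stratum $\sigma$ of $M_{g,n}(\SX,A)_u$; by \cref{thm:continuity} the preimage $T_\sigma\coloneqq (u\circ\tau_T)^{-1}(\sigma)$ is locally closed in $T$. Crucially, because we have quotiented by $u$, a point of $\sigma$ is specified by the positions in $\R^{\IX}\supset\SX$ of a fixed finite collection of distinguished points: the vertices of the image graph together with the marked points. The task therefore reduces to showing that the map $T_\sigma\to\sigma\subset\R^N$ sending a point of $T_\sigma$ to this tuple of positions has polyhedral image.

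To make this tropical, I would cover $T$ by finitely many strictly $k$-affinoid domains. On each affinoid, after passing to a finite extension of $k$ and a quasi-étale cover, I would, exactly as in the proof of \cref{thm:continuity}, produce a formal model $\fT$ of the affinoid and an $n$-pointed genus $g$ formal stable map $(\fC\to\fT,(\fs_i),\ff)$ into a strictly semi-stable refinement $\ofX$ of $\fX$ adapted to $\sigma$. By \cref{rem:explicit}, the positions in $S_{\ofX}\subset\R^{\oIX}$ of the distinguished points are then tuples of valuations of pullbacks via $\ff$ of the coordinate functions on the standard charts $\fS(n,d,\varpi)$, evaluated at the generic points of the irreducible components of $\fCs$ corresponding to the vertices and at the sections $\fs_i$. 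After embedding the affinoid into $\bbD^m$, these valuations are valuations of $D$-functions in the sense of \cref{sec:polyhedrality}, so the relative form of \cref{thm:polyhedrality_of_tropicalization} recalled just before the statement shows that the image of each affinoid piece of $T_\sigma$ in $\R^N$ is a finite polyhedral complex. Taking the union over the finite affinoid cover and intersecting with $\sigma$ gives the desired polyhedral subset of $\sigma$.

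The main obstacle is ensuring that, on each affinoid, the distinguished positions really are computed by a single uniform tuple of $D$-functions. This requires choosing the refinement $\ofX$ of $\fX$ so that the combinatorial pattern of the image is realized compatibly across the whole family over the affinoid, and identifying the specific irreducible components of the special fibers of $\fC$ that contribute to each vertex of the image graph in a locally constant way. The vanishing-cycle local-constancy argument carried out in the proof of \cref{lem:degenerations_formal_model} is precisely what permits this uniform identification after suitable refinement. The passage to the quotient $u$ is essential here: it removes the parametrization data, so we do not need to track identifications of every vertex of $\Gamma$ with a component of $\fCs$, only those that contribute to the image.
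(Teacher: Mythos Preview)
Your overall plan---reduce to tracking finitely many positions in $\R^{\IX}$ and then invoke the relative quantifier-elimination package---is the right shape, but the execution has a genuine gap, and it diverges from the paper's argument in a way that creates the gap rather than avoids it.

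The paper's proof does \emph{not} pass through a formal model of the family $C\to T$, does not refine $\fX$, and does not invoke vanishing cycles at all. It simply covers $\fX$ by standard charts $\fU$ as in \cref{def:sss_formal_model}, covers $f^{-1}(\fU_\eta)\subset C$ by finitely many affinoids $Y_j$, and applies the relative quantifier-elimination paragraph directly to the morphisms $Y_j\to T$ together with the torus-valued map given by the chart coordinates $T_0,\dots,T_d$ of \cref{rem:explicit}. This produces, for each $t\in T$, the \emph{entire} subset $\tau(f(C_t))\subset\SX$ as the fibre $G_t$ of a subanalytic set $G$, cut out by finitely many inequalities $|q_i(t)|\gamma^{u_i}c_i\le|q'_i(t)|$ with $q_i,q'_i$ $D$-functions on $T$. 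The marked-point positions are handled by applying \cref{thm:polyhedrality_of_tropicalization} to $\tau\circ f\circ s_i$. The point of passing to the quotient $u$ is precisely that a class in $M_{g,n}(\SX,A)_u$ is determined by the pair $(\tau(f(C_t)),\,(\tau(f(s_i(t)))))$, so no vertex-by-vertex bookkeeping is needed.

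Your gap is the sentence ``these valuations are valuations of $D$-functions.'' Evaluating the pulled-back chart coordinates at the generic point of an irreducible component of $\fC_s$ (more precisely, at the unique point of $\fC_\eta$ reducing to it) yields a single number, not a function on $T$: that point of $C$ sits over one specific point of $T$, not over all of $T$. There is no evident section $T\to C$ picking out ``the vertex'' fibrewise, so your proposed vertex positions are not $D$-functions on the affinoid in any direct sense. The vanishing-cycle argument you cite from \cref{lem:degenerations_formal_model} controls the weight vectors and hence the combinatorial type, but says nothing about the metric positions of the vertices as $t$ varies; it cannot close this gap. The paper sidesteps the whole issue by never trying to name individual vertices: the relative tropicalization already packages the image set, and the vertex positions of $G_t$ are then piecewise-linear in the polyhedral parameters $(\val q_i(t),\val q'_i(t))$.
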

\begin{proof}
We can assume that $T$ is a $k$-affinoid space.
Choose a finite covering of the formal model $\fX$ by affine open subschemes of the form $\fU$ as in Definition \ref{def:sss_formal_model}.
Let $\fU$ be an element in the covering and let $\{Y_j\}$ be a finite affinoid covering of the inverse image $f^{-1}(\fU_\eta)$.
Using the explicit description of the map $\tau\colon X\rightarrow\SX$ in \cref{rem:explicit}, we apply the model-theoretic arguments above to the morphisms $Y_j\rightarrow T$, and apply \cref{thm:polyhedrality_of_tropicalization} to the tropicalization of the marked points $s_i$.
We deduce that the image of $T$ under the composite map $u\circ\tau_T$ is polyhedral in $M_{g,n}(\SX,A)_u$.
\end{proof}

\begin{thm}\label{thm:relative_polyhedrality}
We use the setting of Proposition \ref{prop:relative_polyhedrality_after_forget}.
The image of $T$ under the map $\tau_T\colon T\rightarrow M_{g,n}(\SX,A)$ is polyhedral in $M_{g,n}(\SX,A)$, in the sense that its intersection with every open stratum of $M_{g,n}(\SX,A)$ is polyhedral.
\end{thm}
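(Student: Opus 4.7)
The plan is to refine the model-theoretic proof of \cref{prop:relative_polyhedrality_after_forget} so that it also records the data forgotten by the quotient $u$, namely the combinatorial type and the individual edge lengths. Fix a combinatorial type $\alpha$ occurring in $M_{g,n}(\SX,A)$, and identify $\Delta_\alpha^\circ$ with the open convex polyhedron in $\R^{\IX}\times\R^{E(\Gamma)}$ given by the root-vertex position and the edge lengths, as in the proof of \cref{prop:space_fixed_combinatorial_type}. The goal is to show that $\tau_T(T)\cap\Delta_\alpha^\circ$ is polyhedral in these coordinates.

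First, by \cref{thm:continuity} combined with \cref{lem:degenerations_formal_model}, the subset $T_\alpha\coloneqq\tau_T^{-1}(\Delta_\alpha^\circ)\subset T$ is locally closed. After possibly passing to a quasi-étale cover and invoking \cite[Theorem 1.5]{Yu_Gromov_2014}, the classifying map $T\to\bcMgn(X,A)$ lifts to a formal stable map $\ff\colon\fC\to\fX$ over a formal model $\fT$. The combinatorial type is constant on each stratum of $\fT_s$, and $T_\alpha$ is the preimage under the reduction map $\pi_\fT$ of the locally closed subscheme of $\fT_s$ consisting of those strata whose generic combinatorial type is exactly $\alpha$. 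Consequently $T_\alpha$ is a subanalytic subset of $T$ in the sense of \cref{def:subanalytic}.

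Second, both the root-vertex position and the edge lengths can be realized as valuations of $D$-functions on a suitable affinoid covering of $T_\alpha$. The root-vertex position comes from the tropicalization map $\tau\colon X\to\SX$ via the explicit description in \cref{rem:explicit}, exactly as in the proof of \cref{prop:relative_polyhedrality_after_forget}. The length of an edge $e$ of $\Gamma$ corresponds to a node of $\fC_s$ whose local étale model over $\fT$ has the form $xy=g$, and this length equals $\val(g)$, where $g$ is an analytic function on $T$; hence it is a $D$-function. Combining these pieces, we obtain a tropicalization-like map $T_\alpha\to\R^{\IX}\times\R^{E(\Gamma)}$ whose image is exactly $\tau_T(T)\cap\Delta_\alpha^\circ$.

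Applying the relative form of \cref{thm:polyhedrality_of_tropicalization} that was used in \cref{prop:relative_polyhedrality_after_forget} to this map then yields polyhedrality of the image, proving the theorem. The main obstacle will be the local analysis at the nodes of the formal stable map in the second step: one must verify rigorously that $T_\alpha$ is subanalytic and that each node deformation parameter can be expressed as a $D$-function compatibly with an affinoid covering of $T_\alpha$, in parallel with the affinoid-covering construction used in the proof of \cref{prop:relative_polyhedrality_after_forget}. Once that local computation is in place, polyhedrality follows from the same quantifier-elimination machinery.
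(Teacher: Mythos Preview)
Your approach is quite different from the paper's, and it leaves real work undone beyond what you flag. The paper does \emph{not} attempt to realize the edge lengths of $\Gamma$ as valuations of node-deformation parameters. Instead it uses a trick that reduces everything to \cref{prop:relative_polyhedrality_after_forget}: after a quasi-\'etale cover of $T$ one adds extra marked points $s_{n+1},\dots,s_{n'}$ so that every vertex of the resulting tropical curve carries at least one marked point. On the subspace $M_{g,n'}^v(\SX,A)\subset M_{g,n'}(\SX,A)$ where this holds, the quotient map $u$ has \emph{finite} fibers, because once each vertex is marked, the subset $h(\Gamma)\subset\SX$ together with the marked-point images determines the parametrized tropical curve up to finitely many choices. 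Over a fixed stratum $O$ of $M_{g,n'}^v(\SX,A)_u$, the preimage $u^{-1}(O)$ therefore breaks into finitely many components $O_1,\dots,O_m$; continuity (\cref{thm:continuity}) then shows that each $(\tau'_T)^{-1}(O_i)$ is a union of connected components of the subanalytic set $(u\circ\tau'_T)^{-1}(O)$, hence itself subanalytic, and \cref{thm:polyhedrality_of_tropicalization} finishes. No analysis of nodal degenerations is needed: the only $D$-functions that appear are those already used in the proof of \cref{prop:relative_polyhedrality_after_forget}.

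Your direct route could in principle be made to work, but the gap is larger than you indicate. An edge of the \emph{simplified} curve $\Gamma$ need not correspond to a single node of $\fC_s$: the contraction in Step~3 of \S\ref{sec:parametrized_tropical_curves} collapses subgraphs of $\Gamma_0$, and simplification then merges chains of edges at degree-$2$ type-$B$ vertices, so the length of $h(e)$ is in general a sum of several node contributions, weighted by the tropical weight vectors. You would also need to argue that the combinatorial type in $\SX$ (including the data $I_v$ for each vertex) is genuinely constant on $\pi_\fT^{-1}$ of a stratum of $\fT_s$; \cref{lem:degenerations_formal_model} only compares strata related by specialization. The paper's marked-point trick sidesteps all of this and keeps the proof entirely within the framework already set up for \cref{prop:relative_polyhedrality_after_forget}.
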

\begin{proof}
We can assume that $T$ is a $k$-affinoid space.
Let $n'$ be an integer greater than or equal to $n$.
Let $M_{g,n'}^v(\SX,A)$ denote the subset of $M_{g,n'}(\SX,A)$ consisting of parametrized tropical curves $(\Gamma,(\gamma_i),h)$ such that $n(v)\ge 1$ for every vertex $v$ of $\Gamma$.
Let $M_{g,n'}^v(\SX,A)_u$ denote the quotient of $M_{g,n'}^v(\SX,A)$ by the equivalence relation introduced before \cref{prop:relative_polyhedrality_after_forget}.
We note that both spaces $M_{g,n'}^v(\SX,A)$ and $M_{g,n'}^v(\SX,A)_u$ are naturally compact topological spaces with a stratification whose open strata are open convex polyhedrons.
Moreover, the fibers of the quotient map $u\colon M_{g,n'}^v(\SX,A)\to M_{g,n'}^v(\SX,A)_u$ are finite.

Up to passing to a finite quasi-étale covering of $T$, we can add enough new marked points so that the tropicalization map
\[\tau'_T\colon T\longrightarrow M_{g,n'}(\SX,A)\]
factorizes through the inclusion $M_{g,n'}^v(\SX,A)\subset M_{g,n'}(\SX,A)$ for some $n'\ge n$.
In order to show that the image $\tau_T(T)$ is polyhedral in $M_{g,n}(\SX,A)$, it suffices to show that the image $\tau'_T(T)$ is polyhedral in $M_{g,n'}^v(\SX,A)$.

Let $O$ be an open stratum of the space $M_{g,n'}^v(\SX,A)_u$.
Let $O_1,\dots,O_m$ denote the connected components of the inverse image $u^{-1}(O)$ in $M_{g,n'}^v(\SX,A)$.
We will show that the intersection $\tau'_T(T)\cap O_i$ is polyhedral for all $i=1,\dots,m$.
Let $T_O\subset T$ denote the inverse image $(u\circ\tau'_T)^{-1} (O)$.
By Theorem \ref{thm:continuity}, the restriction $(\tau'_T)|_{T_{O}} \colon T_{O}\rightarrow u^{-1}(O)$ is a continuous map.
Therefore, for any $i=1,\dots,m$, the inverse $T_{O_i}\coloneqq\big((\tau'_T)|_{T_{O}}\big)^{-1} (O_i)$ is a union of connected components of $T_{O}$.
The model-theoretic arguments above imply that the map $u\circ\tau'_T\colon T\rightarrow M_{g,n}(\SX,A)_u$ is given by the norms of a finite collection of $D$-functions $\{q_j\}$.
Therefore, the subspace $T_O\subset T$ is a subanalytic set.
Since the subspace $T_{O_i}$ is a union of connected components of $T_{O}$, it is also subanalytic (cf.\ \cite[\S 3]{Martin_Tameness_2015}).
Restricting the $D$-functions $\{q_j\}$ to $T_{O_i}$ and applying \cref{thm:polyhedrality_of_tropicalization}, we conclude that the image $\tau'_T(T_{O_i})=\tau'_T(T)\cap O_i$ is polyhedral, completing the proof.
\end{proof}

\begin{cor}\label{cor:polyhedrality}
Let $\bMgnt(X,A)$ denote the image of the tropicalization map
\[\tau_M\colon \bcMgn(X,A)\rightarrow M_{g,n}(\SX,A).\]
Then $\bMgnt(X,A)$ is a compact and polyhedral in $M_{g,n}(\SX,A)$.
\end{cor}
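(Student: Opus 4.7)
The plan is to deduce this corollary from the two main results already established: Corollary \ref{cor:compactness_of_Mtgn} for compactness and Theorem \ref{thm:relative_polyhedrality} for polyhedrality, by reducing to a finite atlas of the stack $\bcMgn(X,A)$.

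Compactness is immediate from Corollary \ref{cor:compactness_of_Mtgn}, so the only real content lies in the polyhedrality assertion, i.e.\ showing that for every open stratum $O$ of $M_{g,n}(\SX,A)$, the intersection $\bMgnt(X,A)\cap O$ is polyhedral. First I would invoke Theorem \ref{thm:non-archimedean_Gromov_(tropicalization_moduli)}: the moduli stack $\bcMgn(X,A)$ is a compact $k$-analytic stack. In particular one can choose a finite collection of strictly $k$-analytic spaces $T_1,\dots,T_r$ together with morphisms $t_j\colon T_j\to\bcMgn(X,A)$ which together form a surjective atlas. Each $t_j$ corresponds to a family of $n$-pointed genus $g$ $k$-analytic stable maps into $X$ over $T_j$ with degree bounded by $A$, and hence gives rise to a set-theoretic tropicalization map $\tau_{T_j}\colon T_j\to M_{g,n}(\SX,A)$ that factors $\tau_M\circ t_j$.

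Since the $t_j$ are jointly surjective on points, the image of $\tau_M$ decomposes as the finite union
\[\bMgnt(X,A)=\bigcup_{j=1}^r \tau_{T_j}(T_j).\]
By Theorem \ref{thm:relative_polyhedrality}, each set $\tau_{T_j}(T_j)$ is polyhedral in $M_{g,n}(\SX,A)$, meaning its intersection with every open stratum is polyhedral. Since a finite union of polyhedral subsets of a given open stratum is again polyhedral, we conclude that $\bMgnt(X,A)\cap O$ is polyhedral for every open stratum $O$, which is the required polyhedrality.

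The main (minor) obstacle is to make sure that one can really reduce to a finite atlas and that the notion of ``surjective atlas'' for the compact $k$-analytic stack $\bcMgn(X,A)$ is strong enough to guarantee surjectivity on the set-theoretic level used to define $\tau_M$. This is a consequence of the compactness statement in Theorem \ref{thm:non-archimedean_Gromov_(tropicalization_moduli)} combined with the standard fact that a compact $k$-analytic stack admits a finite surjective chart by strictly $k$-analytic spaces; once this is granted, the compatibility $\tau_M\circ t_j=\tau_{T_j}$ is built into the definition of $\tau_M$ given in Section \ref{sec:continuity_of_tropicalization}, and the argument proceeds exactly as above.
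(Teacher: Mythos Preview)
Your proposal is correct and is essentially the natural unpacking of the paper's own one-line proof, which simply says the corollary follows from Theorems \ref{thm:non-archimedean_Gromov_(tropicalization_moduli)}, \ref{thm:continuity} and \ref{thm:relative_polyhedrality}. You invoke Corollary \ref{cor:compactness_of_Mtgn} (itself a consequence of \ref{thm:non-archimedean_Gromov_(tropicalization_moduli)} and \ref{thm:continuity}) for compactness and then pass to a finite atlas via \ref{thm:non-archimedean_Gromov_(tropicalization_moduli)} to apply \ref{thm:relative_polyhedrality}, which is exactly the intended argument.
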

\begin{proof}
The corollary follows from Theorems \ref{thm:non-archimedean_Gromov_(tropicalization_moduli)}, \ref{thm:continuity} and \ref{thm:relative_polyhedrality}.
\end{proof}

\bibliographystyle{plain}
\bibliography{dahema}

\end{document}